\newtheorem{thm}{Theorem}[section]
\newtheorem{lem}[thm]{Lemma}
\theoremstyle{remark}
\newtheorem{rem}{\bf Remark}[section]
\theoremstyle{definition}
\newtheorem{defn}[thm]{Definition}
\numberwithin{equation}{section}
\begin{document}
\title[The Existence of Full Dimensional KAM tori for d-dimensional NLS]{The Existence of full dimensional tori for d-dimensional Nonlinear Schr$\ddot{\mbox{O}}$dinger equation}

\author[H. Cong]{Hongzi Cong}
\address[H. Cong]{School of Mathematical Sciences, Dalian University of Technology, Dalian 116024, China}
\email{conghongzi@dlut.edu.cn}
\author[X.Wu]{Xiaoqing Wu}
\address[X. Wu]{School of Mathematical Sciences, Dalian University of Technology, Dalian 116024, China}
\email{wuxiaoqing0216@163.com}
\author[Y.Wu]{Yuan Wu}
\address[Y. Wu]{School of Mathematics and Statistics,
Huazhong University of Science and Technology, Wuhan 430070, China} \email{wuyuan@hust.edu.cn}

\date{\today}

\keywords{Almost periodic solution; Full dimensional tori; NLS equation; KAM Theory}


\begin{abstract}
In this paper, we prove the existence of full dimensional tori for $d$-dimensional nonlinear Schr$\ddot{\mbox{o}}$dinger equation  with periodic boundary conditions
\begin{equation*}\label{L1}
\sqrt{-1}u_{t}+\Delta u+V*u\pm\epsilon |u|^2u=0,\hspace{12pt}x\in\mathbb{T}^d,\quad d\geq 1,
\end{equation*}
where $V*$ is the convolution potential. Here the radius of the invariant torus satisfies a slower decay, i.e.
\begin{equation*}\label{031601}
I_{\textbf n}\sim e^{-r\ln^{\sigma}\left\|\textbf n\right\|},\qquad \mbox{as}\ \left\|\textbf
n\right\|\rightarrow\infty,
\end{equation*}for any $\sigma>2$ and $r\geq 1$.
This result confirms a conjecture by Bourgain [J. Funct. Anal. 229 (2005), no. 1, 62-94].

\end{abstract}

\maketitle

\section{Introduction and main results}

It is an elementary problem to understand qualitative certain aspects of the long time behavior of solutions of Hamiltonian partial differential equations (PDEs), such as Nekhoroshev stability under an $\varepsilon$-perturbation, possible growth of higher Sobolev norm of classical solutions for $t\rightarrow \infty$ and  the existence and abundance of quasi-periodic and almost periodic motion in phase space.

An open problem was raised by Kuksin (see Problem 7.1 in \cite{Kuksin2004}):
 \vskip8pt
 {\it Can the full dimensional KAM tori be expected with a suitable decay for Hamiltonian partial differential equations, for example, $$I_n\sim e^{-C\ln |n|}$$ with some $ C>0$ as $|n|\rightarrow+\infty$? }

 In 2005, a pioneering work by Bourgain \cite{Bourgain2005JFA} was to construct the full dimensional tori, which are the support of almost periodic solutions, for 1-dimensional nonlinear Schr\"{o}dinger
equation (NLS)
\begin{equation*}
\sqrt{-1}u_t-u_{xx}+V*u+\epsilon|u|^4u=0,\hspace{12pt}x\in\mathbb{T},
\end{equation*}
where the radius of the invariant torus satisfies
\begin{equation}\label{031701}
I_n\sim e^{-\sqrt{|n|}},\qquad \mbox{as}\ |n|\rightarrow \infty.
\end{equation}
At the same time Bourgain pointed out that: \textbf{we do not know at this time how to prove a 2D-analogue of Theorem 1, considering for instance the cubic NLS.}

Our basic motivation is to prove Bourgain's conjecture that the existence of full dimensional tori for $d$-dimensional NLS
\begin{equation}\label{061510}
\sqrt{-1}u_{t}+\Delta u+V*u\pm\epsilon |u|^2u=0,\hspace{12pt}x\in\mathbb{T}^d,\quad d\geq 1.
\end{equation}
Here $V*$ is the convolution potential defined by
\begin{equation*}
\widehat{V*u}(\textbf n)=V_{\textbf n}\widehat u(\textbf n)
\end{equation*}
and $\widehat u(\textbf n)$ is the $\textbf n$-th Fourier coefficient of $u$. Furthermore, using some techniques in \cite{cong2021}, we can obtain a better result, i.e. the radius of the full dimensional tori satisfies a much slower decay than (\ref{031701}), i.e.
 \begin{equation*}\label{031601}
I_{\textbf n}\sim e^{-r \ln^{\sigma}\left\|\textbf n\right\|},\qquad \mbox{as}\ \left\|\textbf n\right\|\rightarrow\infty,
\end{equation*}with any $\sigma>2,r\geq 1$ for equation (\ref{061510}), which is closer to the conjecture by Kuksin.

To state our result, we will introduce a stronger Diophantine condition than the one given in \cite{Bourgain2005JFA} firstly.
For any vector $0\neq \ell \in\mathbb{Z}^{\mathbb{Z}^d}$ with
$$|\ell|:=\sum_{\textbf n\in\mathbb{Z}^d}|\ell_{\textbf n}|<\infty,$$ define the system by
\begin{equation}\label{040603}
\textbf n(\ell):=\left\{\textbf n\in\mathbb{Z}^d:\ \mbox{which is repeated $|\ell_{\textbf n}|$ times}\right\}.
\end{equation}
Also we can write
 \begin{equation*}\textbf n(\ell)=\left(\textbf n_{i}^{*}\left(\ell\right)\right)_{1\leq i\leq |\ell|},\qquad \textbf{n}_i^{*}\left(\ell\right)\in\mathbb{Z}^d,
\end{equation*}
which satisfies
\begin{equation*}
\left\|\textbf n_1^{*}\left(\ell\right)\right\|\geq \dots\geq \left\|\textbf n_{|\ell|}^{*}\left(\ell\right)\right\|,
\end{equation*}
where $\left\|\cdot\right\|$ denotes the the usual Euclidean norm by
\begin{equation*}
\left\|\textbf n\right\|:=\sqrt{\sum_{i=1}^d|n_i|^2},\qquad \textbf{n}=(n_i)_{1\leq i\leq d}\in\mathbb{Z}^d.
\end{equation*}
Let
\begin{equation}\label{050701}
\Pi:=\left\{\omega=\left(\omega_{\textbf n}\right)_{\textbf n\in\mathbb{Z}^d}:\omega_{\textbf n}\in\left[0,{\langle \textbf n\rangle}^{-1}\right]\right\},
\end{equation}
where
\begin{equation*}
\langle \textbf n\rangle =\max\left\{1,\left\|\textbf n\right\|\right\}.
\end{equation*}
Then we say a vector $\omega\in \Pi$ is strong Diophantine, if  there exists a real number $0<\gamma<1$ such that both of the following inequalities hold true:\\
(1) for any $0\neq \ell\in\mathbb{Z}^{\mathbb{Z}^d}$ with $|\ell|<\infty$, one has
\begin{equation}\label{040601}
\left|\left|\left|\sum_{\textbf n\in\mathbb{Z}^d}\ell_{\textbf n}\omega_{\textbf n}\right|\right|\right|\geq \gamma\prod_{\textbf n\in\mathbb{Z}^d}\frac{1}{1+|\ell_{\textbf n}|^3\langle \textbf n\rangle^{d+4}};
\end{equation}
(2) for any $0\neq \ell\in\mathbb{Z}^{\mathbb{Z}^d}$ with $|\ell|<\infty$ and  $ \left\|{\textbf n}_3^{*}(\ell)\right\|< \left\|{\textbf n}_2^{*}(\ell)\right\|$, one has
\begin{equation}\label{040602}
\left|\left|\left|\sum_{\textbf n\in\mathbb{Z}^d}\ell_{\textbf n}\omega_{\textbf n}\right|\right|\right|\geq\frac{\gamma^5}{100}\prod_{\textbf n\in\mathbb{Z}^d\atop \left\|\textbf n\right\|\leq \left\|{\textbf n}_3^{*}(\ell)\right\|}\left(\frac{1}{1+|\ell_{\textbf n}|^3\langle \textbf n\rangle^{d+7}}\right)^{10},
\end{equation}
where
\begin{equation*}
\left|\left|\left|x\right|\right|\right|=\inf_{j\in\mathbb{Z}}|x-j|.
\end{equation*}

Now  our main result is as follows:
\begin{thm}\label{031930}
Given any $\sigma>2,r\geq 1$ and any frequency vector $\omega\in\Pi$ satisfying the strong Diophantine conditions (\ref{040601}) and (\ref{040602}), then there exists a small $\epsilon_*(\sigma,r,\gamma)>0$ depending on $\sigma,r$ and $\gamma$ only, and  for any $0<\epsilon<\epsilon_*(\sigma,r,\gamma)$, there exist some $V\in[0,1]^{\mathbb{Z}^d}$ and a constant $\xi\in\mathbb{R}$ such that equation (\ref{061510}) has a full dimensional invariant torus $\mathcal{E}$  satisfying:\\
(1). the amplitude $I=(I_{\textbf n})_{\textbf n\in\mathbb{Z}^d}$ of $\mathcal{E}$ restricted as
\begin{equation*}
c_1\epsilon^2e^{-2r\ln^{\sigma}\langle\textbf n\rangle}\leq |I_{\textbf n}|\leq C_1\epsilon^2e^{-2r\ln^{\sigma}\langle \textbf n\rangle},\qquad \forall\ \textbf n\in\mathbb{Z}^d,
\end{equation*}
where $c_1<C_1$ are two positive constants depending on $\sigma$ and $r$ only; \\
(2). the frequency on $\mathcal{E}$ prescribed to be $\left(\left\|\textbf n\right\|^2+\xi+\omega_{\textbf n}\right)_{\textbf n\in\mathbb{Z}^d}$;\\
(3). the invariant torus $\mathcal{E}$ is linearly stable.
\end{thm}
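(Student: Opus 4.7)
The plan is to reformulate equation (\ref{061510}) as a Hamiltonian system in the Fourier variables and to construct the torus by an infinite-dimensional Birkhoff / KAM normal form adapted to the prescribed weight. Writing $u(x,t)=\sum_{\textbf{n}\in\mathbb{Z}^d}q_{\textbf{n}}(t)e^{i\textbf{n}\cdot x}$, the equation becomes the Hamiltonian system for
\begin{equation*}
H=\sum_{\textbf{n}\in\mathbb{Z}^d}\left(\|\textbf{n}\|^2+\xi+\omega_{\textbf{n}}\right)|q_{\textbf{n}}|^2\pm\frac{\epsilon}{2}\!\!\sum_{\textbf{n}_1+\textbf{n}_2=\textbf{n}_3+\textbf{n}_4}\!\!q_{\textbf{n}_1}q_{\textbf{n}_2}\bar q_{\textbf{n}_3}\bar q_{\textbf{n}_4},
\end{equation*}
upon choosing the Fourier multiplier of the convolution potential as $V_{\textbf{n}}=\xi+\omega_{\textbf{n}}$, so that the unperturbed linear frequencies match those prescribed in item~(2). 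The target torus corresponds to $|q_{\textbf{n}}|^2=I_{\textbf{n}}\asymp\epsilon^2 e^{-2r\ln^\sigma\langle\textbf{n}\rangle}$, and I would introduce action--angle coordinates $q_{\textbf{n}}=\sqrt{I_{\textbf{n}}+y_{\textbf{n}}}\,e^{-i\theta_{\textbf{n}}}$ near it, then expand $H$ as a power series in $(y,\theta)$ with monomial coefficients measured by a norm carrying a weight $\prod_{j}e^{r\ln^\sigma\langle\textbf{n}_j^*(\ell)\rangle}$ tailored to $I_{\textbf{n}}$.

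The core step is a normal form iteration that, at each stage, solves a homological equation $\{N,F\}+R=\widetilde N$ whose Fourier--angle coefficients involve dividing by small divisors $\sum_{\textbf{n}}\ell_{\textbf{n}}\omega_{\textbf{n}}+k$ with $k\in\mathbb{Z}$. Monomials whose index multiset satisfies $\|\textbf{n}_3^*(\ell)\|<\|\textbf{n}_2^*(\ell)\|$ are controlled by the sharper bound (\ref{040602}), while the others fall back on the generic bound (\ref{040601}). This two-tier Diophantine structure is exactly what allows the slow decay $e^{-r\ln^\sigma\langle\textbf{n}\rangle}$ to survive the iteration: the generic bound is applied only to monomials already supported on few \emph{large} modes, where the polynomial product in (\ref{040601}) can be paid for by the weight, while the bulk of the perturbation—long monomials with two dominant modes—is treated via (\ref{040602}), whose product ranges only over the tail $\|\textbf{n}\|\leq\|\textbf{n}_3^*(\ell)\|$ and is therefore cheap. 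The modulation constant $\xi$ is fixed at the end of the scheme to absorb the accumulated frequency shift and recover the frequencies in item~(2).

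The principal obstacle, and the reason Bourgain left the $d\geq 2$ case open, is showing that after one step the composed remainder $R\circ X_F^t$ still lies in the same weighted class. In each Poisson bracket two monomials are glued along a shared mode, so the new weight acquires a deficit that must be absorbed by the exponential factor $e^{-r\ln^\sigma\langle\textbf{n}_3^*\rangle}$ granted by the tame divisor bound together with the choice $\sigma>2$. Since in dimension $d$ the number of modes of norm $\leq N$ is $O(N^d)$, the threshold $\sigma>2$ is critical for beating the polynomial losses uniformly in $d$, and the sharp combinatorial bookkeeping—following the techniques of \cite{cong2021}—is what closes the scheme. Once convergence is established on a shrinking neighbourhood of $\{y=0\}$, the set $\mathcal{E}=\{y=0\}$ is an invariant torus, the amplitude bounds in item~(1) follow by comparing the transformed actions with $I_{\textbf{n}}$, and the quadratic-in-$y$ part of the normalized Hamiltonian has purely imaginary tangential spectrum, which gives the linear stability asserted in item~(3).
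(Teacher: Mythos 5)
Your high-level outline—Fourier expansion, an infinite-dimensional normal-form iteration, and the two-tier Diophantine dichotomy driven by whether $\|\textbf n_3^*(\ell)\|<\|\textbf n_2^*(\ell)\|$—matches the skeleton of the paper's argument, and you correctly identify that the failure of $(\ref{060701})+(\ref{060702})\Rightarrow(\ref{022004})$ in $d\ge 2$ is what forces the introduction of the second condition $(\ref{040602})$. However, there are two substantive gaps.

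First, the treatment of the convolution potential is incorrect as written. You set $V_{\textbf n}=\xi+\omega_{\textbf n}$ at the outset and then say $\xi$ is ``fixed at the end of the scheme to absorb the accumulated frequency shift.'' These two steps are mutually inconsistent, and more importantly the frequency shift produced by the averaged term $[R_{1,s}]$ is \emph{not} $\textbf n$-independent: it is $\sum_{a}B^{(\textbf n)}_{a00}\mathcal{M}_{a00}$, a genuine function of $\textbf n$. A single scalar $\xi$ cannot absorb it. The paper instead keeps $V$ as a free parameter ranging in a complex cube $\mathcal{C}_{\eta_s}(V_s^*)$, imposes $\widehat V_s(V_s^*)=\omega$ at every step, and at the end of each step freezes a new center $V_{s+1}^*$ by an inverse function theorem so that the \emph{full} $\textbf n$-dependent shift is absorbed into the potential. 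The scalar $\xi=\sum_s\breve V_s$ accounts only for the $\textbf n$-independent limit of the shift.

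Second, and related, to even make this decomposition $\widetilde V_{\textbf n,s}=\breve V_s+\widehat V_{\textbf n,s}$ with $|\widehat V_{\textbf n,s}|\lesssim\langle\textbf n\rangle^{-1}$ possible—crucial because the parameter domain $\Pi$ in $(\ref{050701})$ only allows $\omega_{\textbf n}\in[0,\langle\textbf n\rangle^{-1}]$—one must show that the frequency shift has a limit as $\|\textbf n\|\to\infty$ and that the deviation from this limit decays like $1/\|\textbf n\|$. This is precisely the role of the $\rho$-T\"oplitz-Lipschitz property (Definition \ref{061502}), which the paper propagates through the whole iteration (Steps 5 and 6 of Lemma \ref{IL}) and singles out in Remark 1.3 as a key reason the method works for $d\ge 2$ but not for the wave equation. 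Your proposal omits this structure entirely, and without it the scheme cannot close: the new $\omega_{\textbf n}$ would drift outside $\Pi$ and the measure-preserving shift of $V$ would fail.

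A smaller point: your explanation of $\sigma>2$ as beating $O(N^d)$ mode-counting is off. The threshold comes from the iteration's convergence rate—one needs $\exp\{C(d)\,s^{1/(\sigma-1)}\}\ll\exp\{s\ln\frac32\}$ in estimates such as $(\ref{052102})$, forcing $1/(\sigma-1)<1$—and is already needed in $d=1$ (see \cite{cong2021}). The dimension enters only through the Diophantine exponents $d+4,\,d+7$ and the constants, not through the exponent $\sigma$. Also, the paper works directly in complex Cartesian coordinates with the variables $J_{\textbf n}=|q_{\textbf n}|^2-I_{\textbf n}(0)$ rather than in action-angle variables $(\theta,y)$; the latter are degenerate at the origin, which is a genuine issue when the target actions $I_{\textbf n}(0)=e^{-2r\ln^\sigma\lfloor\textbf n\rfloor}$ are small for all but finitely many $\textbf n$.
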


Since 1990's, the KAM theory for infinite dimensional Hamiltonian system has been well developed to study the existence and linear stability of invariant tori for Hamiltonian PDEs. See \cite{BBM2014,BBM2016,BBHM2018,CY2000,CW1993,K1987,KP1996,K2000,Kuksin2004,LY2010,P'1996,P1996,W1990} for the related works for 1-dimensional PDEs. The solutions starting from the invariant torus stay on the torus all the time, which can be considered as permanent stability. For high dimensional PDEs, the situation becomes more complicated due to the multiple eigenvalues of Laplacian operator. Bourgain \cite{Bourgain1998Annals,Bourgain2005} developed a new method initialed by Craig-Wayne \cite{CW1993} to prove the existence of lower dimensional KAM tori for $d$-dimensional NLS and $d$-dimensional NLW with $d\geq 1$, based on the Newton iteration, Fr$\ddot{\mbox{o}}$hlich-Spencer techniques, Harmonic analysis and semi-algebraic set theory. This is so-called C-W-B method.
Later, Eliasson-Kuksin \cite{EK2010} obtained both the existence and the linear stability of KAM tori for $d$-dimensional NLS in a classical KAM way. Also see \cite{BB2013,BCM2015,BP2011,BWJEMS,GXY,PP2015,Wang2016Duke,Wang2020CMP} for example. Recently, an important progress is given by Baldi-Berti-Haus-Montalto \cite{BBHM2018}, where the existence and the linear stability of Cantor families of
small amplitude time quasi-periodic solutions for water wave equation are constructed. Here the main difficulties are the fully nonlinear nature
of the gravity water waves equations and the fact that
the linear frequencies grow just in a sublinear way at infinity. See \cite{BB2012,BCM2015,BM2020,BFM2021} for the related problem.

In the above works, the obtained KAM tori are of low (finite) dimension which are the support of the quasi-periodic solutions. Biasco-Massetti-Procesi \cite{BMP2021Poincare} pointed out that {\it the constructed quasi-periodic solutions are not typical in the sense that the low dimensional tori have measure zero for any reasonable measure on the infinite dimensional phase space}. It is natural at this point to find the full dimensional tori which are the support of the almost periodic solutions. The first result on the existence of almost periodic solutions for Hamiltonian PDEs was proved by Bourgain in \cite{Bourgain1996} using C-W-B method. Later, P$\ddot{\mbox{o}}$schel \cite{Poschel2002} (also see \cite{GX2013} by Geng-Xu) constructed the almost periodic solutions for 1-dimensional NLS by the classical KAM method. The basic idea to obtain these almost periodic solutions is by perturbing the quasi-periodic ones. That is why the action $I=(I_n)_{ n\in\mathbb{Z}}$ must satisfy some very strong compactness properties.

The first try to obtain the existence of full dimensional tori with a slower decay was given by Bourgain \cite{Bourgain2005JFA}, who proved that 1-dimensional NLS has a full dimensional KAM torus of prescribed frequencies with the actions of the tori obeying the estimate (\ref{031701}). {Different from \cite{Bourgain1996} and \cite{Poschel2002}, Bourgain \cite{Bourgain2005JFA} treated all Fourier modes at once, which caused a much worse small denominator problem. To this end, Bourgain took advantage of two key facts. Let $(n_i)_{i\geq1}$ be a finite set of modes, $|n_1|\geq |n_2|\geq \cdots$ and
\begin{equation}\label{060701}
n_1-n_2+n_3-\cdots=0.
\end{equation}Then
the following inequality holds
\begin{equation}\label{061501}
\sum_{i\geq1}\sqrt{|n_i|}-2\sqrt{|n_1|}\geq\frac14\sum_{i\geq 3}{\sqrt{|n_i|}}.
\end{equation}
Furthermore, there is also a relation
\begin{equation}\label{060702}
n_1^2-n_2^2+n_3^2-\cdots=o(1)
\end{equation}in  the case of a `near' resonance.
The conditions (\ref{060701}) and (\ref{060702}) implies that the first two biggest indices $|n_1|$ and $|n_2|$ can be controlled by other indices, i.e.
\begin{equation}\label{022004}|n_1|+|n_2|\leq C\left(|n_3|+|n_4|+\cdots\right),
 \end{equation} unless $n_1=n_2$. The inequalities (\ref{061501}) and (\ref{022004}) are essential to control the small divisor.

Recently, Cong-Liu-Shi-Yuan \cite{CLSY2018JDE} generalized Bourgain's result from $\theta=1/2$ to any $0<\theta<1$ in a classical KAM way, where the actions of the tori satisfying
\begin{equation*}\label{020203}I_n\sim e^{-{|n|^{\theta}}},\qquad \theta\in (0,1).
\end{equation*}
The authors also proved the obtained tori are stable in a sub-exponential long time. Another important progress is given by Biasco-Massetti-Procesi \cite{BMP2021Poincare}, who proved the existence
and linear stability of almost periodic solution for 1-dimensional NLS by constructing a
rather abstract counter-term theorem for infinite dimensional Hamiltonian system.

A problem raised by Bourgain in \cite{Bourgain2005JFA} is that: {\it do there exist the full dimensional tori for $2$-dimensional NLS which satisfy the estimate (\ref{031701})?}
As Bourgain pointed out that a main difficulty may be that $(\ref{060701})+ (\ref{060702})\Rightarrow (\ref{022004})$ holds true only in $1$-dimensional case. In other words, let $(\textbf n_i)_{i\geq 1}$ with $\textbf n_i\in\mathbb{Z}^d, d\geq 2$ satisfying $\left\|\textbf n_1\right\|\geq \left\|\textbf n_2\right\|\geq \cdots$. Assume that
\begin{equation*}
\textbf n_1-\textbf n_2+\textbf n_3-\cdots=0
\end{equation*}
and
\begin{equation*}
\left\|\textbf n_1\right\|^2-\left\|\textbf n_2\right\|^2+\left\|\textbf n_3\right\|^2-\cdots=o(1),
\end{equation*}one can not obtain the following inequality
\begin{equation}\label{061601}
\left\|\textbf n_1\right\|+\left\|\textbf n_2\right\|\leq C\left(\left\|\textbf n_3\right\|+\left\|\textbf n_4\right\|+\cdots\right)
\end{equation}even if $\textbf n_1\neq \textbf n_2$. As mentioned earlier, the inequality (\ref{061601}) (also see (\ref{022004}) for $1$-dimensional case) is important to obtain a suitable bound of the solution of homological equation. To overcome such a difficulty, we firstly observe that in the case $\left\|{\textbf n}_3^{*}(\ell)\right\|=\left\|{\textbf n}_2^{*}(\ell)\right\|$, combining with {momentum conservation (\ref{050901}), the inequality (\ref{061601}) still holds true. If $ \left\|{\textbf n}_3^{*}(\ell)\right\|< \left\|{\textbf n}_2^{*}(\ell)\right\|$, we will introduce the nonresonant conditions
(\ref{040602}), which seem like the second Melnikov conditions when dealing with the existence of lower dimensional tori. Note that the righthand of (\ref{040602}) only contains the terms $\left(\left\|\textbf n_i^*(\ell)\right\|\right)_{i\geq 3}$. We will prove most of $\omega$ satisfy the nonresonant conditions (\ref{040601}) and (\ref{040602}) in Lemma \ref{050601}.

Recently, Cong \cite{cong2021} improved the previous results in the sense that
the action of the obtained full dimensional tori for 1-dimensional NLS satisfies
\begin{equation}\label{032301.}I_n\sim e^{-r\ln^{\sigma}|n|},\qquad n\rightarrow \infty
\end{equation}with any $\sigma>2$ and $r\geq 1$. Using some techniques in \cite{cong2021}, we prove the existence of full dimensional tori for high dimensional NLS with a better estimate than Bourgain conjectured in \cite{Bourgain2005JFA}.

Finally, we will give some more remarks.
\begin{rem}When infinite systems of coupled harmonic oscillators with finite-range couplings are considered, P$\ddot{\mbox{o}}$schel in \cite{Poschel1990} proved the existence of full dimensional tori for infinite dimensional Hamiltonian system with spatial structure of short range couplings consisting of
connected sets only, where the action satisfies (\ref{032301.}).
 In particular, in the simplest case, i.e. only nearest neighbour coupling, the result can be optimized for any $\sigma>1$. Of course, Hamiltonian PDEs are not of short range, and do not contain such spatial structure. So our result may be optimal.
\end{rem}
\begin{rem}
The Momentum Conservation (\ref{050901}) is very important in our paper. One reason is that the condition (\ref{050901}) guarantees there are no small divisor  when $|k|+|k'|=2$. The other one is that we have to control the frequency shift by so-called $\rho$-T\"{o}plitz-Lipschitz property for the Hamiltonian (see Definition \ref{061502}), which generalizes the idea given by Geng-Xu-You in \cite{GXY}.
Hence our method can not be applied to $d$-dimensional nonlinear wave equation directly even to $1$-dimensional nonlinear wave equation under periodic boundary conditions.
\end{rem}
\begin{rem}
Recently, there are some important progresses on long time stability and possible growth of Sobolev norm for high dimensional Hamiltonian PDEs on irrational tori. See (\cite{Berti2019JDE,Deng2019,DG2019,Imekraz2016}) for example.
It is natural to ask whether our method can be applied to this case.
\end{rem}
\begin{rem}
An interesting result was proven by Biasco-Massetti-Procesi in \cite{BMP2021Poincare}.
The authors constructed almost periodic solutions of 1-dimensional NLS, which have Sobolev regularity both in time and space. This is the first result of this kind in KAM theory for PDEs.
\end{rem}

\section{The Norm of the Hamiltonian}
In this section, we will introduce some notations and definitions firstly.

Fixed $\sigma>2$ and given any $\rho\geq0$, define the Banach space $\mathfrak{H}_{\sigma,\rho}$ of all complex-valued sequences
$q=(q_{{\textbf n}})_{{\textbf n}\in\mathbb{Z}^d}$ with
\begin{equation*}\label{042501}
\left\|q\right\|_{\sigma,\rho}=\sup_{{\textbf n}\in\mathbb{Z}^d}\left|q_{\textbf n}\right| e^{\rho\ln^{\sigma}{\lfloor{\textbf n}\rfloor}}<\infty,
\end{equation*}
where
\begin{equation*}
\lfloor \textbf n\rfloor=\max\left\{2^{10},\left\|\textbf n\right\|\right\}.
\end{equation*}
\begin{rem}
Define another norm $\left\|\cdot\right\|_{\sigma,\rho}'$ by
\begin{equation*}
\left\|q\right\|'_{\sigma,\rho}=\sup_{{\textbf n}\in\mathbb{Z}^d}\left|q_{\textbf n}\right| e^{\rho\ln^{\sigma}{\langle\textbf n\rangle}}.
\end{equation*}
Then it is easy to see that the norms $\left\|\cdot\right\|_{\sigma,\rho}$ and $\left\|\cdot\right\|_{\sigma,\rho}'$ are equivalent. The reason why we use the norm $\left\|\cdot\right\|$ is that some estimates are easy to obtain. See the proof of Lemma \ref{122203} for example.
\end{rem}
Consider the Hamiltonian $R(q,\bar q)$ with the form of
\begin{equation}\label{052301}
R(q,\bar q)=\sum_{a,k,k'\in\mathbb{N}^{\mathbb{Z}^d}}R_{akk'}\mathcal{M}_{akk'},
\end{equation}
where
\begin{equation*}\label{030502}
\mathcal{M}_{akk'}=\prod_{{\textbf n}\in\mathbb{Z}^d}I_{\textbf n}(0)^{a_{\textbf n}}q_{\textbf n}^{k_{\textbf n}}\bar q_{\textbf n}^{k_{\textbf n}'}
\end{equation*}is the so-called monomial, $R_{akk'}$ is the corresponding coefficient and $I_{\textbf n}(0)$ is considered as the initial data.
Given a monomial $\mathcal{M}_{akk'}$,
define by
\begin{equation*}
\mbox{supp}\ \mathcal{M}_{akk'}=\mbox{supp}\ (a,k,k'):=\{{\textbf n}\in\mathbb{Z}^d:a_{{\textbf n}}+k_{\textbf n}+k_{\textbf n}'\neq 0\}.
\end{equation*}
Furthermore, we always assume that each monomial $\mathcal{M}_{akk'}$ in $R(q,\bar q)$ satisfies:\\
(1) \textbf{Mass Conservation}
\begin{equation}\label{051702}
\sum_{\textbf n\in\mathbb{Z}^d}(k_{\textbf n}-k'_{\textbf n})=0;
\end{equation}
(2) \textbf{Momentum Conservation}
\begin{equation}\label{050901}
\sum_{\textbf n\in\mathbb{Z}^d}(k_{\textbf n}-k'_{\textbf n})\textbf{n}=0.
\end{equation}
Similar as (\ref{040603}), we define the system by
\begin{equation*}\label{040604}
\textbf{n}(a,k,k'):=\left\{\textbf n\in\mathbb{Z}^d:\ \mbox{which is repeated $2|a_{\textbf n}|+|k_{\textbf n}|+|k'_{\textbf n}|$ times}\right\},
\end{equation*}
and write
 \begin{equation}\label{053101}
\textbf n(a,k,k')=\left(\textbf n_{i}^{*}(a,k,k')\right)_{i\geq 1},\qquad \textbf{n}_i^{*}(a,k,k')\in\mathbb{Z}^d,
\end{equation} which satisfies
\begin{equation*}
\left\|\textbf n_1^{*}(a,k,k')\right\|\geq \dots\geq\left\|\textbf n_{m}^{*}(a,k,k')\right\|
\end{equation*}
and  $m=2|a|+|k|+|k'|$.

\begin{rem}
Here we always assume that $$2|a|+|k|+|k'|\geq 4,$$
since the cubic NLS is considered.
\end{rem}
Before defining the norm of the Hamiltonian $R(q,\bar q)$, we introduce the following lemma firstly:
\begin{lem}\label{005}
Fixing $\sigma>2$ and given any $a,k,k'\in\mathbb{N}^{\mathbb{Z}^d}$, assume {Momentum Conservation} (\ref{050901}) is satisfied.
Then one has
\begin{equation}\label{001}
\sum_{ {\textbf n}\in\mathbb{Z}^d}\left(2a_{\textbf n}+k_{\textbf n}+k_ {\textbf n}'\right){  \ln^{\sigma}\lfloor{\textbf n}\rfloor}-2{\ln^{\sigma}\lfloor\textbf n_1^{*}(a,k,k')\rfloor}\geq \frac12\sum_{i\geq 3}{\ln^{\sigma}\lfloor {\textbf n}_i^{*}(a,k,k')\rfloor}.
\end{equation}
\end{lem}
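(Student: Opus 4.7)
My plan is to reduce the inequality to a constraint coming from momentum conservation, and then prove the resulting analytic bound by a case analysis.

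\emph{Step 1 (Reformulation and trivial cases).} Because $\sum_i \ln^\sigma \lfloor \textbf{n}_i^*\rfloor = \sum_\textbf{n}(2a_\textbf{n}+k_\textbf{n}+k'_\textbf{n})\ln^\sigma \lfloor \textbf{n}\rfloor$, the claim is equivalent to
\[
\ln^\sigma \lfloor \textbf{n}_1^*\rfloor - \ln^\sigma \lfloor \textbf{n}_2^*\rfloor \leq \tfrac{1}{2}\sum_{i\geq 3}\ln^\sigma \lfloor \textbf{n}_i^*\rfloor.
\]
If $\|\textbf{n}_1^*\|=\|\textbf{n}_2^*\|$ the left side vanishes, and if $\|\textbf{n}_1^*\|\leq 2^{10}$ every floor equals $2^{10}$ and it again vanishes. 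So I may assume $\|\textbf{n}_1^*\| > \max(\|\textbf{n}_2^*\|,\,2^{10})$.

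\emph{Step 2 (Momentum constraint).} Under that assumption $\textbf{n}_1^*$ must have multiplicity exactly one in the list $\textbf{n}(a,k,k')$ (otherwise $\textbf{n}_2^*=\textbf{n}_1^*$), so $a_{\textbf{n}_1^*}=0$ and $|k_{\textbf{n}_1^*}-k'_{\textbf{n}_1^*}|=1$. Momentum conservation (\ref{050901}) then yields
\[
\|\textbf{n}_1^*\|=\Bigl\|\sum_{\textbf{n}\neq \textbf{n}_1^*}(k_\textbf{n}-k'_\textbf{n})\textbf{n}\Bigr\|\leq \sum_{\textbf{n}\neq \textbf{n}_1^*}(2a_\textbf{n}+k_\textbf{n}+k'_\textbf{n})\|\textbf{n}\|=\sum_{i\geq 2}\|\textbf{n}_i^*\|.
\]
Setting $y_i:=\lfloor \textbf{n}_i^*\rfloor$ and using $y_i\geq\|\textbf{n}_i^*\|$, this gives the key bound $y_1\leq y_2+S$ with $S:=\sum_{i\geq 3}y_i$, where $S$ contains at least two summands because $m=2|a|+|k|+|k'|\geq 4$.

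\emph{Step 3 (Analytic closure).} From $y_1\leq y_2+S$, $y_i\geq 2^{10}$, and $y_2\geq y_i$ for $i\geq 3$, I would derive $\ln^\sigma y_1-\ln^\sigma y_2\leq \tfrac12\sum_{i\geq 3}\ln^\sigma y_i$ by splitting on the size of $S$. If $S\leq y_2$, concavity of $\ln$ gives $\ln y_1-\ln y_2\leq \ln(1+S/y_2)\leq S/y_2\leq 1$, so the mean value theorem applied to $t\mapsto t^\sigma$ yields $\ln^\sigma y_1-\ln^\sigma y_2\leq \sigma(\ln y_2+1)^{\sigma-1}\cdot S/y_2$, which is absorbed into the right-hand side using the uniform bound $(\ln y)^{\sigma-1}/y\leq C_\sigma$ on $y\geq 2^{10}$ together with $\ln^\sigma y_i\geq (10\ln 2)^\sigma$. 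If $S>y_2$, then $\ln y_1\leq \ln(2S)\leq \tfrac{11}{10}\ln S$ (since $\ln S\geq 10\ln 2$), and a sub-additivity estimate of the form $\ln^\sigma S\leq C_\sigma\sum_{i\geq 3}\ln^\sigma y_i$ (coming from $\ln S\leq \ln(m-2)+\ln y_3$, $y_3\geq 2^{10}$, and $\sigma>2$) closes the inequality.

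\emph{Main obstacle.} The tight regime is the first sub-case when $y_2$ is far larger than every $y_i$ with $i\geq 3$: the factor $(\ln y_2)^{\sigma-1}$ is then large, but momentum conservation forces $S\geq y_1-y_2$ to be correspondingly small, and these two competing effects must be balanced precisely. The assumption $\sigma>2$, together with the generous floor $\lfloor\textbf{n}\rfloor\geq 2^{10}$, leaves just enough slack to achieve the constant $\tfrac12$ on the right, and getting this balance right is the core of the estimate.
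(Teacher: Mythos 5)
Your Steps 1 and 2 are correct: the algebraic reformulation to $\ln^{\sigma}\lfloor\textbf{n}_1^*\rfloor-\ln^{\sigma}\lfloor\textbf{n}_2^*\rfloor\leq\frac12\sum_{i\geq 3}\ln^{\sigma}\lfloor\textbf{n}_i^*\rfloor$ is equivalent to (\ref{001}), the dismissal of the trivial cases is right, and your derivation of $y_1\leq y_2+S$ from momentum conservation is precisely the paper's (\ref{51503}). The gap is in Step 3, where neither sub-case is actually closed as sketched. In the case $S\leq y_2$, the quantity $\sigma(\ln y_2+1)^{\sigma-1}S/y_2$ is not controlled by $\frac12\sum_{i\geq 3}\ln^{\sigma}y_i$ merely through a uniform bound on $(\ln y)^{\sigma-1}/y$: that bound turns the left side into $C_{\sigma}S$, and $S=\sum_{i\geq 3}y_i$ can be exponentially larger than $\sum_{i\geq 3}\ln^{\sigma}y_i$. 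To make a term-wise comparison work you would need the monotonicity of $y\mapsto y/(\ln y)^{\sigma}$ to write $y_i/y_2\leq(\ln y_i/\ln y_2)^{\sigma}$ and then check $2\sigma(\ln y_2+1)^{\sigma-1}/(\ln y_2)^{\sigma}\leq 1$; neither step appears, and the second is a tight numerical verification, not a free absorption. In the case $S>y_2$, the chain $\ln^{\sigma}y_1\leq(\tfrac{11}{10})^{\sigma}\ln^{\sigma}S\leq(\tfrac{11}{10})^{\sigma}C_{\sigma}\sum_{i\geq 3}\ln^{\sigma}y_i$ with $\ln^{\sigma}y_2\geq 0$ dropped would require $(\tfrac{11}{10})^{\sigma}C_{\sigma}\leq\tfrac12$, hence $C_{\sigma}<\tfrac12$. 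No such sub-additivity constant exists: already with two equal terms $y_3=y_4$ one has $\ln^{\sigma}(2y_3)/(2\ln^{\sigma}y_3)=\tfrac12(1+\ln 2/\ln y_3)^{\sigma}>\tfrac12$. So the subtracted $\ln^{\sigma}y_2$ is essential, and your sketch never uses it.

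The tool that replaces all of this is the paper's Lemma \ref{122203} (Remark \ref{51505}): for $c(\sigma)\leq y\leq x$,
\[
\ln^{\sigma}x+\ln^{\sigma}y-\ln^{\sigma}(x+y)\geq\tfrac12\ln^{\sigma}y.
\]
This is the sharp two-term super-additivity of $\ln^{\sigma}$ that the mean value theorem is too blunt to give --- it carries the factor $\tfrac12$ exactly, with no $\sigma$-dependent loss. Applying it with $x=y_2+\cdots+y_{i-1}$ and $y=y_i$ for $i=3,4,\ldots$ (legitimately, since each partial sum dominates $y_2\geq y_i$ and $y_i\geq 2^{10}$) and telescoping yields
\[
\ln^{\sigma}\Bigl(\sum_{i\geq 2}y_i\Bigr)-\ln^{\sigma}y_2\leq\tfrac12\sum_{i\geq 3}\ln^{\sigma}y_i,
\]
and since $\ln^{\sigma}y_1\leq\ln^{\sigma}(\sum_{i\geq 2}y_i)$ by your Step 2, this is exactly (\ref{001}) with no case split at all. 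That single inequality is the missing idea; once you have it, the balance you flag at the end as the core difficulty is not something you need to negotiate case by case --- it is encoded once and for all in Lemma \ref{122203}.
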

\begin{proof}
Based on  {Momentum Conservation} (\ref{050901}) and the triangle inequality, one has
\begin{equation}\label{51503}
\lfloor \textbf n^*_1(a,k,k')\rfloor \leq\sum_{i\geq 2} \lfloor \textbf n^*_i(a,k,k')\rfloor.
\end{equation}
Note that
\begin{equation}\label{0528001}
\sum_{ {\textbf n}\in\mathbb{Z}^d}\left(2a_{\textbf n}+k_{\textbf n}+k_ {\textbf n}'\right){  \ln^{\sigma}\lfloor{\textbf n}\rfloor}=\sum_{i\geq 1}{\ln^{\sigma}\lfloor {\textbf n}_i^{*}(a,k,k')\rfloor}.
\end{equation}
Using (\ref{51503}) and (\ref{0528001}), the inequality (\ref{001}) follows from
\begin{equation}\label{51506}
\sum_{i\geq 2} \ln^{\sigma}\lfloor \textbf n^*_i(a,k,k')\rfloor-\ln^{\sigma}\left(\sum_{i\geq 2} \lfloor \textbf n^*_i(a,k,k')\rfloor\right)\geq \frac12\sum_{i\geq 3} \ln^{\sigma}\lfloor \textbf n^*_i(a,k,k')\rfloor.
\end{equation}
In view of (\ref{51504}) in Remark \ref{51505} and by induction, we finish the proof of (\ref{51506}).
\end{proof}

\begin{defn}\label{083103}
Consider the Hamiltonian $R(q,\bar q)$ with the form of (\ref{052301}). Fixed $\sigma>2,r\geq 1$ and given any $\rho\geq 0$, define
\begin{equation}\label{042602}
\left\|R\right\|_{\sigma,\rho}=\sup_{a,k,k'\in\mathbb{N}^{\mathbb{Z}^d}}\frac{\left|R_{akk'}\right|}{e^{\rho\left(\sum_{\textbf n\in\mathbb{Z}^d}\left(2a_{\textbf n}+k_{\textbf n}+k_{\textbf n}'\right) \ln^{\sigma}\lfloor{\textbf n}\rfloor-2\ln^{\sigma}\lfloor{\textbf n}_1^{*}(a,k,k')\rfloor\right)}}
\end{equation}
and
\begin{equation}\label{d2}
\left\|R\right\|^*_{\sigma,r,\rho}:=\sum_{a,k,k'\in\mathbb{N}^{\mathbb{Z}^d}}\left|R_{akk'}\right|e^{-2r\sum_{\textbf n\in\mathbb{Z}^d}a_{\textbf n}\ln^{\sigma}\lfloor\textbf n\rfloor}\cdot e^{-\rho\sum_{\textbf n\in\mathbb{Z}^d}\left(k_{\textbf n}+k'_{\textbf n}\right)\ln^{\sigma}\lfloor\textbf n\rfloor}.
\end{equation}
\end{defn}
\begin{rem}
The indices $\sigma>2$ and $r\geq1$ will be fixed during the KAM iteration while the index $\rho$ will change. For simplicity, denote by
\begin{equation*}
\left\|\cdot\right\|_{\sigma,\rho}:=\left\|\cdot\right\|_{\rho}
\end{equation*}
and
\begin{equation*}
\left\|\cdot\right\|^*_{\sigma,r,\rho}:=\left\|\cdot\right\|^*_{\rho}.
\end{equation*}
In fact, in the $s$-th KAM iterative step (also in some technical lemmas), one can consider $\sigma=2.001, r=1$ for example and $\rho_s$ will be given at the beginning of Subsection \ref{031501}.
\end{rem}
\begin{rem}
In view of (\ref{d2}), it is easy to show that
\begin{equation}\label{041510}
\left\|R\right\|^*_{\rho}:=\sup_{\left\|q\right\|_{\sigma,\rho,\infty}\leq 1}\sum_{a,k,k'\in\mathbb{N}^{\mathbb{Z}^d}}\left|R_{akk'}\mathcal{M}_{akk'}\right|,
\end{equation}
with
\begin{equation}\label{M13}
I_{\textbf n}(0):=e^{-2r\ln^{\sigma}\lfloor\textbf n\rfloor}.
\end{equation}
\end{rem}
Therefore we have
\begin{lem}\label{l2}
Given two Hamiltonian  $ F,G$,\ then  the following estimate holds true
\begin{eqnarray}\label{002}
\|F\cdot G\|^{*}_{\rho}\leq \|F\|^{*}_{\rho}\cdot\|G\|^{*}_{\rho}.
\end{eqnarray}
\end{lem}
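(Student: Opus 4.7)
The plan is to exploit the variational characterization (\ref{041510}) of $\|\cdot\|^*_\rho$, which reduces sub-multiplicativity to a statement about termwise-positive evaluations. Fix any sequence $q$ with $\|q\|_{\sigma,\rho}\leq 1$ and note that the monomials multiply cleanly, namely $\mathcal{M}_{akk'}\cdot\mathcal{M}_{bll'}=\mathcal{M}_{(a+b)(k+l)(k'+l')}$, so both the Mass Conservation (\ref{051702}) and Momentum Conservation (\ref{050901}) properties persist after multiplication. Writing the coefficient of $FG$ as a Cauchy-type convolution
\begin{equation*}
(FG)_{cmm'}=\sum_{a+b=c,\ k+l=m,\ k'+l'=m'}F_{akk'}\,G_{bll'},
\end{equation*}
the triangle inequality gives
\begin{equation*}
\sum_{c,m,m'}\left|(FG)_{cmm'}\mathcal{M}_{cmm'}\right|\ \leq\ \sum_{a,k,k'}\sum_{b,l,l'}|F_{akk'}||G_{bll'}|\cdot|\mathcal{M}_{akk'}|\cdot|\mathcal{M}_{bll'}|,
\end{equation*}
where the factorization of $|\mathcal{M}_{(a+b)(k+l)(k'+l')}|$ as $|\mathcal{M}_{akk'}|\cdot|\mathcal{M}_{bll'}|$ is immediate from the definition.

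The right-hand side now factors as a product of two independent sums, each of which is bounded using (\ref{041510}) by $\|F\|^*_\rho$ and $\|G\|^*_\rho$ respectively (for this particular $q$). Taking the supremum of the left-hand side over all $q$ with $\|q\|_{\sigma,\rho}\leq 1$ then delivers (\ref{002}). An equivalent route is to start directly from the series definition (\ref{d2}) and observe that the weight $e^{-2r\sum c_{\textbf n}\ln^\sigma\lfloor\textbf n\rfloor}e^{-\rho\sum(m_{\textbf n}+m'_{\textbf n})\ln^\sigma\lfloor\textbf n\rfloor}$ splits multiplicatively under $c=a+b$, $m=k+l$, $m'=k'+l'$, which is exactly the statement that weighted $\ell^1$-norms are sub-multiplicative for convolution when the weight is log-additive.

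There is no real obstacle here: nothing about the conservation laws or the $\sigma,r,\rho$ parameters enters beyond ensuring the monomials close under multiplication, and the estimate holds with equality in the worst-case choice $q_{\textbf n}=e^{-\rho\ln^\sigma\lfloor\textbf n\rfloor}$ with all $F_{akk'},G_{bll'}\geq 0$. The lemma is purely formal and its only role is to convert arithmetic manipulations with Poisson brackets and Taylor expansions in later sections into clean product estimates in the $\|\cdot\|^*_\rho$-norm.
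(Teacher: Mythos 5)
Your proof is correct and takes essentially the same route as the paper: the paper's own proof consists of the single sentence that (\ref{002}) "deduces from (\ref{041510}) directly," and you are simply supplying the details — the Cauchy-product decomposition, the multiplicativity $|\mathcal{M}_{akk'}|\cdot|\mathcal{M}_{bll'}|=|\mathcal{M}_{(a+b)(k+l)(k'+l')}|$, and the resulting factorization of the termwise sum — that make that "directly" precise.
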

\begin{proof}
The inequality (\ref{002}) deduces from (\ref{041510}) directly.
\end{proof}
\begin{lem}\label{060207}
Given any $\rho,\delta\geq 0$, then one has
\begin{equation}\label{060208}
\left\|R\right\|_{\rho+\delta}^*\leq \left\|R\right\|_{\rho}^*.
\end{equation}
\end{lem}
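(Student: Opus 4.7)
The plan is to compare the two series defining $\|R\|^*_\rho$ and $\|R\|^*_{\rho+\delta}$ term by term using the explicit formula (\ref{d2}). First I would write out both sums: they share the weight $e^{-2r\sum_{\textbf n}a_{\textbf n}\ln^\sigma\lfloor\textbf n\rfloor}$ (which depends only on $a$, not on $\rho$), and differ only by the extra multiplicative factor
$$e^{-\delta\sum_{\textbf n\in\mathbb{Z}^d}\left(k_{\textbf n}+k'_{\textbf n}\right)\ln^\sigma\lfloor\textbf n\rfloor}$$
attached to each coefficient $|R_{akk'}|$ in the $\rho+\delta$ series.

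The main (and essentially only) point to check is that this factor lies in $(0,1]$. Since $\lfloor\textbf n\rfloor=\max\{2^{10},\|\textbf n\|\}\geq 2^{10}>1$ by definition, $\ln\lfloor\textbf n\rfloor>0$ and hence $\ln^\sigma\lfloor\textbf n\rfloor\geq 0$ is well-defined and non-negative for every $\textbf n\in\mathbb{Z}^d$. Combined with $k_{\textbf n},k'_{\textbf n}\in\mathbb{N}$ and $\delta\geq 0$, the exponent above is non-positive, so the factor is bounded by $1$. Termwise domination then yields (\ref{060208}) upon summing, since every term in the series for $\|R\|^*_\rho$ is non-negative. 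There is no real obstacle: the lemma is the monotonicity of $\rho\mapsto\|R\|^*_\rho$ built directly into the definition, and the use of the floor $\lfloor\cdot\rfloor$ in place of the bracket $\langle\cdot\rangle$ is precisely what guarantees the non-negativity of $\ln^\sigma\lfloor\textbf n\rfloor$ used here.
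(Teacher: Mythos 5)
Your proof is correct and takes essentially the same route as the paper, which simply states that the inequality follows from the definition (\ref{d2}); you have just spelled out the elementary termwise comparison — the extra factor $e^{-\delta\sum_{\textbf n}(k_{\textbf n}+k'_{\textbf n})\ln^{\sigma}\lfloor\textbf n\rfloor}\le 1$ because $\delta\ge 0$, $k_{\textbf n},k'_{\textbf n}\in\mathbb{N}$, and $\lfloor\textbf n\rfloor\ge 2^{10}>1$ forces $\ln^{\sigma}\lfloor\textbf n\rfloor\ge 0$ — which is precisely what the paper leaves implicit.
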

\begin{proof}
The inequality (\ref{060208}) follows from (\ref{d2}) in Definition \ref{083103}.
\end{proof}
\begin{lem}(\textbf{Hamiltonian Vector Field})\label{063004}
Given $\sigma>2,r\geq 1,\rho\geq 0,\delta>0$ satisfying $\rho+\delta<r$, assume the Hamiltonian $R(q,\bar q)$ with the form of (\ref{052301}) satisfying $\left\|R\right\|_{\rho} < \infty$.
 Then one has
\begin{equation}\label{050907.}
\sup_{\left\|q\right\|_{\sigma,\rho+\delta}\leq1}\left\|X_R\right\|_{\sigma,\rho+\delta}\leq \exp\left\{10d\left(\frac{2000d}{\delta^2}\right)^d\cdot \exp\left\{d{\left(\frac{10d}{\delta} \right)}^{\frac{1}{\sigma-1}}\right\}\right\}\left\|R\right\|_{\rho}.
\end{equation}\color{black}
In particular, if $\delta\geq\frac r2$, one gets
\begin{equation}\label{050907}
\sup_{\left\|q\right\|_{\sigma,r}\leq1}\left\|X_R\right\|_{\sigma,r}\leq C_1(d)\left\|R\right\|_{\rho},
\end{equation}
where $C_1(d)$ is a universal constant depending on $d$ only.\color{black}
\end{lem}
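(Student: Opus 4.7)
The proof proceeds as a direct pointwise bound followed by a combinatorial sum over monomials. First I would write the Hamiltonian vector field componentwise as $(X_R)_m^+=-\sqrt{-1}\,\partial_{\bar q_m}R$ and $(X_R)_m^-=\sqrt{-1}\,\partial_{q_m}R$ and differentiate the series (\ref{052301}) termwise:
\[
\partial_{\bar q_m}R=\sum_{a,k,k'}R_{akk'}\,k'_m\prod_{\textbf n\in\mathbb Z^d}I_{\textbf n}(0)^{a_{\textbf n}}q_{\textbf n}^{k_{\textbf n}}\bar q_{\textbf n}^{k'_{\textbf n}-\delta_{\textbf n m}},
\]
where the sum is restricted to triples with $k'_m\geq1$. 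The $\partial_{q_m}R$ component is treated symmetrically; in what follows I focus on the $\bar q_m$-derivative.

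Next I pass to a pointwise estimate. From the definition (\ref{042602}) of $\|R\|_\rho$, one has $|R_{akk'}|\leq \|R\|_\rho\,e^{\rho A_{akk'}}$ with $A_{akk'}=\sum_{\textbf n}(2a_{\textbf n}+k_{\textbf n}+k'_{\textbf n})\ln^\sigma\lfloor\textbf n\rfloor-2\ln^\sigma\lfloor\textbf n_1^*\rfloor$. Combining this coefficient bound with $I_{\textbf n}(0)=e^{-2r\ln^\sigma\lfloor\textbf n\rfloor}$ and $|q_{\textbf n}|\leq e^{-(\rho+\delta)\ln^\sigma\lfloor\textbf n\rfloor}$ (the hypothesis $\|q\|_{\sigma,\rho+\delta}\leq1$), and multiplying by $e^{(\rho+\delta)\ln^\sigma\lfloor m\rfloor}$ so as to bound $\|X_R\|_{\sigma,\rho+\delta}$, the exponential weight telescopes to
\[
-2(r-\rho)\sum_{\textbf n}a_{\textbf n}\ln^\sigma\lfloor\textbf n\rfloor-\delta\sum_{\textbf n}(k_{\textbf n}+k'_{\textbf n})\ln^\sigma\lfloor\textbf n\rfloor+2(\rho+\delta)\ln^\sigma\lfloor m\rfloor-2\rho\ln^\sigma\lfloor\textbf n_1^*\rfloor.
\]
Since $k'_m\geq1$ forces $m$ into the support, $\lfloor m\rfloor\leq\lfloor\textbf n_1^*\rfloor$, so the last two terms are bounded by $2\delta\ln^\sigma\lfloor\textbf n_1^*\rfloor$. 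I then invoke Lemma \ref{005}, or rather its immediate consequence $2\ln^\sigma\lfloor\textbf n_1^*\rfloor\leq\sum_{\textbf n}(2a_{\textbf n}+k_{\textbf n}+k'_{\textbf n})\ln^\sigma\lfloor\textbf n\rfloor$ coming from (\ref{0528001}) and (\ref{51506}), and split this upper bound between the $a$- and $(k,k')$-sums. Using $r-\rho>\delta$, a definite fraction of the decay on both types of multiplicities survives.

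It remains to sum over $(a,k,k')$. For a fixed finite support $\mathcal S\subset\mathbb Z^d$, the geometric sums over the multiplicities $a_{\textbf n},k_{\textbf n},k'_{\textbf n}$ for $\textbf n\in\mathcal S$ factor into an elementary product bounded by $\prod_{\textbf n\in\mathcal S}(1-e^{-c\delta\ln^\sigma\lfloor\textbf n\rfloor})^{-3}$ (the linear factor $k'_m$ is absorbed by a standard derivative-of-geometric-series trick). Summing over supports is controlled by an exponential of the series $\sum_{\textbf n\in\mathbb Z^d}e^{-c\delta\ln^\sigma\lfloor\textbf n\rfloor}$. To estimate this series, one counts lattice points: for any $T\geq1$, $\#\{\textbf n\in\mathbb Z^d:\ln^\sigma\lfloor\textbf n\rfloor\leq T\}\leq e^{dT^{1/\sigma}}$; optimizing in $T$ gives $\sum_{\textbf n}e^{-c\delta\ln^\sigma\lfloor\textbf n\rfloor}\lesssim\exp\{d(10d/\delta)^{1/(\sigma-1)}\}$, while a polynomial factor $(2000d/\delta^{2})^d$ absorbs the low-frequency contribution. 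Multiplying these pieces together reproduces the double-exponential constant in (\ref{050907.}); the simpler bound (\ref{050907}) is the special case $\delta\geq r/2$, for which $r-\rho$ and $\delta$ are both comparable to $r$ and all $\delta$-dependent blow-ups are absorbed into a universal constant $C_1(d)$.

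The main obstacle is the bookkeeping in the second paragraph. A naive use of Lemma \ref{005} to absorb $2\delta\ln^\sigma\lfloor\textbf n_1^*\rfloor$ exactly cancels the $\delta$-decay on $(k_{\textbf n}+k'_{\textbf n})$, which would make the support sum in the third paragraph diverge; one must spend only a fraction of the $a$- and $(k,k')$-decay on this absorption, which is exactly where the hypothesis $\rho+\delta<r$ is used (so that $r-\rho-c\delta>0$ for suitable $c$).
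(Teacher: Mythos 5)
Your first two paragraphs correctly reproduce the pointwise exponent that the paper estimates: after multiplying by $e^{(\rho+\delta)\ln^{\sigma}\lfloor m\rfloor}$ the exponential weight indeed telescopes to
$-2(r-\rho)\sum_{\textbf n} a_{\textbf n}\ln^{\sigma}\lfloor\textbf n\rfloor-\delta\sum_{\textbf n}(k_{\textbf n}+k'_{\textbf n})\ln^{\sigma}\lfloor\textbf n\rfloor+2(\rho+\delta)\ln^{\sigma}\lfloor m\rfloor-2\rho\ln^{\sigma}\lfloor\textbf n_1^{*}\rfloor$.
But the step where you absorb $2\delta\ln^{\sigma}\lfloor\textbf n_1^{*}\rfloor$ by ``spending only a fraction'' of the $a$- and $(k,k')$-decay does not close the argument. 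The inequality $2\ln^{\sigma}\lfloor\textbf n_1^{*}\rfloor\leq\sum(2a_{\textbf n}+k_{\textbf n}+k'_{\textbf n})\ln^{\sigma}\lfloor\textbf n\rfloor$ is not bounded away from equality: take $a=0$, a quartic monomial $q_{\textbf n_1}\bar q_{\textbf n_2}q_{\textbf n_3}\bar q_{\textbf n_4}$ with $\lfloor\textbf n_1\rfloor=\lfloor\textbf n_2\rfloor\to\infty$ and $\textbf n_3,\textbf n_4$ bounded, momentum-conserving. Then $2\ln^{\sigma}\lfloor\textbf n_1^{*}\rfloor/\sum_{i\geq1}\ln^{\sigma}\lfloor\textbf n_i^{*}\rfloor\to1$, so there is no uniform fraction $c$ for which $2\ln^{\sigma}\lfloor\textbf n_1^{*}\rfloor\leq(1-c)\sum$. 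Consequently, after absorption the coefficient of $\ln^{\sigma}\lfloor\textbf n_1^{*}\rfloor$ and $\ln^{\sigma}\lfloor\textbf n_2^{*}\rfloor$ in the surviving exponent is zero, the geometric sums over $k_{\textbf n_1},k'_{\textbf n_1},k_{\textbf n_2},k'_{\textbf n_2}$ in your third paragraph diverge, and the ``exponential of a series'' bound never gets started. The hypothesis $\rho+\delta<r$ only rescues the $a$-multiplicities and has no bearing on this $(k,k')$-divergence.

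The paper's proof handles this with a dichotomy that your argument does not contain. When $\lfloor m\rfloor\leq\lfloor\textbf n_3^{*}\rfloor$, one should not compare $\lfloor m\rfloor$ to $\lfloor\textbf n_1^{*}\rfloor$ at all; using $\lfloor m\rfloor\leq\lfloor\textbf n_3^{*}\rfloor$ together with $\ln^{\sigma}\lfloor\textbf n_3^{*}\rfloor\leq\tfrac13\sum_{i\geq1}\ln^{\sigma}\lfloor\textbf n_i^{*}\rfloor$ the net bonus is at most $\tfrac23\delta\sum_{i\geq1}$, leaving a uniform $\tfrac{\delta}{3}$-fraction of decay on every copy, after which your product-over-supports computation is valid. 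When $\lfloor m\rfloor>\lfloor\textbf n_3^{*}\rfloor$ (the problematic regime above), one necessarily has $k_m\leq2$ and $m\in\{\textbf n_1^{*},\textbf n_2^{*}\}$, Lemma~\ref{005} leaves only $e^{-\frac{\delta}{2}\sum_{i\geq3}\ln^{\sigma}\lfloor\textbf n_i^{*}\rfloor}$, and — this is the missing idea — \emph{momentum conservation} (\ref{050901}) implies that once $(\textbf n_i^{*})_{i\geq3}$ and $m$ are fixed, $\textbf n_1^{*}$ and $\textbf n_2^{*}$ are uniquely determined. So the sum runs only over $(\textbf n_i^{*})_{i\geq3}$, for which decay is available, times an $O(1)$ multiplicity factor ((\ref{030204})--(\ref{022301})). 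Without this structural use of momentum conservation there is no way to control the two largest modes, and a rebalancing of exponents alone cannot replace it.
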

\begin{proof}
Fixing any $\textbf j\in\mathbb{Z}^d$, it suffices to estimate the upper bound for
\begin{equation*}
\left|\frac{\partial{R}}{\partial q_{\textbf j}}e^{\left(\rho+\delta\right)\ln^{\sigma}\lfloor \textbf j\rfloor}\right|.
\end{equation*}In view of (\ref{052301}),
one has
\begin{equation*}
\frac{\partial{R}}{\partial q_{\textbf j}}=\sum_{a,k,k'\in\mathbb{N}^{\mathbb{Z}^d}}R_{akk'}\left(\prod_{\textbf n\neq \textbf j}I_{\textbf n}(0)^{a_{\textbf n}}q_{\textbf n}^{k_{\textbf n}}\bar{q}_{\textbf n}^{k_{\textbf n}'}\right)\left(k_{\textbf j}I_{\textbf j}(0)^{a_{\textbf j}}q_{\textbf j}^{k_{\textbf j}-1}\bar{q}_{\textbf j}^{k_{\textbf j}'}\right).
\end{equation*}
Based on (\ref{042602}), one has
\begin{equation}\label{050903}
\left|R_{akk'}\right|\leq \left\|R\right\|_{\rho}e^{\rho\left(\sum_{\textbf n\in\mathbb{Z}^d}\left(2a_{\textbf n}+k_{\textbf n}+k_{\textbf n}'\right)\ln^{\sigma}\lfloor{\textbf n}\rfloor-2\ln^{\sigma}\lfloor{\textbf n}_1^*(a,k,k')\rfloor\right)}.
\end{equation}\label{060310}
The conditions $\left\|q\right\|_{\sigma,\rho+\delta}\leq1$ implies
\begin{equation}\label{060310}
\left|q_{\textbf n}\right|\leq e^{-\left(\rho+\delta\right)\ln^{\sigma}\lfloor \textbf n\rfloor },\qquad \textbf n\in\mathbb{Z}^d.
\end{equation}Using (\ref{M13}), (\ref{050903}), (\ref{060310}) and $\rho+\delta<r$, one has
\begin{eqnarray}
\left|\frac{\partial{R}}{\partial q_{\textbf j}}e^{(\rho+\delta)\ln^{\sigma}\lfloor{\textbf j}\rfloor}\right|\nonumber
&\leq&C\left\|R\right\|_{\rho},
\end{eqnarray}
where
\begin{equation*}C:=\left|\sum_{a,k,k'\in\mathbb{N}^{\mathbb{Z}^d}}k_{\textbf j}e^{-\delta\sum_{{\textbf n}\in\mathbb{Z}^d}\left(2a_{\textbf n}+k_{\textbf n}+k_{\textbf n}'\right)\ln^{\sigma}\lfloor{\textbf n}\rfloor-2\rho\ln^{\sigma}\lfloor{\textbf n}_1^*(a,k,k')\rfloor+2(\rho+\delta)\ln^{\sigma}\lfloor{\textbf j}\rfloor}\right|.
\end{equation*}
Then the inequality (\ref{050907.}) will follow from
\begin{eqnarray}
C
&\leq& \exp\left\{10d\left(\frac{2000d}{\delta^2}\right)^d\cdot \exp\left\{d{\left(\frac{10d}{\delta} \right)}^{\frac{1}{\sigma-1}}\right\}\right\}\label{022302}.
\end{eqnarray}
Now we will estimate (\ref{022302}) in the following two cases:

\textbf{Case 1.} $$\lfloor{\textbf j}\rfloor \leq \lfloor {\textbf n}_3^{*}(a,k,k')\rfloor.$$
Then one has
\begin{eqnarray*}
C
\leq  \sum_{a,k,k'\in\mathbb{N}^{\mathbb{Z}^d}}k_{\textbf j}e^{-\frac{\delta}3\sum_{i\geq 1}\ln^{\sigma}\lfloor{\textbf n}_i^{*}(a,k,k')\rfloor}.
\end{eqnarray*}
Note that
\begin{equation}\label{060320}
k_{\textbf j}\leq \sum_{\textbf n\in\mathbb{Z}^d}\left(2a_{\textbf n}+k_{\textbf n}+k'_{\textbf n}\right)\leq \sum_{i\geq 1}\ln^{\sigma}\lfloor \textbf n^*_i(a,k,k')\rfloor,
\end{equation}and  we have
\begin{eqnarray}
&&\nonumber \sum_{a,k,k'\in\mathbb{N}^{\mathbb{Z}^d}}k_{\textbf j}e^{-\frac{\delta}3\sum_{i\geq 1}\ln^{\sigma}\lfloor{\textbf n}_i^{*}(a,k,k')\rfloor}\\
\nonumber&\leq&\sum_{a,k,k'\in\mathbb{N}^{\mathbb{Z}^d}}\left(\sum_{i\geq 1}\ln^{\sigma}\lfloor {\textbf n}_i^*(a,k,k')\rfloor\right)e^{-\frac{\delta}3\sum_{i\geq 1}\ln^{\sigma}\lfloor{\textbf n}_i^{*}(a,k,k')\rfloor}\\
\nonumber&\leq&\frac{12}{e\delta}\sum_{a,k,k'\in\mathbb{N}^{\mathbb{Z}^d}}e^{-\frac{\delta}4\sum_{i\geq 1}\ln^{\sigma}\lfloor{\textbf n}_i^{*}(a,k,k')\rfloor
}\qquad\mbox{(in view of (\ref{042805*}))}\\
\nonumber&\leq&\frac{12}{e\delta}\prod_{\textbf n\in\mathbb{Z}^d}\left({1-e^{-\frac {\delta}2\ln^{\sigma}\lfloor{\textbf n}\rfloor}}\right)^{-1}\prod_{\textbf n\in\mathbb{Z}^d}
\left({1-e^{-\frac {\delta}{4}\ln^{\sigma}\lfloor{\textbf n}\rfloor}}\right)^{-2} \quad {\mbox{(by (\ref{041809}))}}    \\
&\leq&\label{050904}\frac{12}{e\delta}\exp\left\{3\left(\frac{1600d}{\delta^2}\right)^d\cdot \exp\left\{d{\left(\frac{8d}{\delta} \right)}^{\frac{1}{\sigma-1}}\right\}\right\},
\end{eqnarray}
where the last inequality is based on (\ref{122401}) in Lemma \ref{a5}.

\textbf{Case 2.} $$\lfloor\textbf j\rfloor > \lfloor \textbf n_3^{*}(a,k,k')\rfloor.$$
If $k_{\textbf j}\geq 3$, then one has $\lfloor\textbf j\rfloor \leq \lfloor \textbf n_3^{*}(a,k,k')\rfloor$ which is in \textbf{Case\ 1.}. Hence we always assume $k_{\textbf j}\leq 2$. Then using (\ref{001}), we have
\begin{equation}
C
\label{201606032}\leq 2\left|\sum_{a,k,k'\in\mathbb{N}^{\mathbb{Z}^d}}e^{-\frac{\delta}2\sum_{i\geq 3}\ln^{\sigma}\lfloor \textbf n_i^{*}(a,k,k')\rfloor}\right|.
\end{equation}
For simplicity, we denote that $\textbf n_i=\textbf n_i^{*}(a,k,k')$ below. Note that if $(\textbf n_i)_{i\geq 1}$ is given, then $\textbf n(a,k,k')$ is specified, and hence $\textbf n(a,k,k')$ is specified up to a factor of
\begin{equation}\label{030204}\prod_{\textbf n\in\mathbb{Z}^d}\left(1+l_{\textbf n}^2\right),
\end{equation}
where
$$l_{\textbf n}=\#\{j:\textbf n_j=\textbf n\}.$$
In view of $\lfloor\textbf j\rfloor >\lfloor \textbf n_3\rfloor$ again, one has $\textbf j\in\left\{\textbf n_1,\textbf n_2\right\}$ and
\begin{equation*}
l_{\textbf n_1}+l_{\textbf n_2}\leq 2,
\end{equation*}
which implies
\begin{equation}\label{022301}
\prod_{\textbf n\in\mathbb{Z}^d\atop \textbf n=\textbf n_1,\textbf n_2}\left(1+l_{\textbf n}^2\right)\leq 5.
\end{equation}Furthermore if $(\textbf n_i)_{i\geq 3}$ and $\textbf j$ are given, $\textbf n_1$ and $\textbf n_2$ are uniquely determined. Then using (\ref{030204}) and (\ref{022301}) one has
\begin{eqnarray}\label{050905}
\nonumber(\ref{201606032})
&\leq&\nonumber 10\left|\sum_{\left(\textbf n_i\right)_{i\geq3}}\prod_{\textbf n\in\mathbb{Z}^d\atop\lfloor\textbf n\rfloor\leq \lfloor\textbf n_3\rfloor}\left(1+l_{\textbf n}^2\right)e^{-\frac{\delta}2\sum_{i\geq 3}\ln^{\sigma}\lfloor\textbf n_i\rfloor}\right| \\
\nonumber&\leq&10\left(\sum_{(\textbf n_i)_{i\geq3}}e^{-\frac {\delta}4\sum_{i\geq 3}\ln^{\sigma}\lfloor\textbf n_i\rfloor}\right)\cdot\sup_{(\textbf n_i)_{i\geq3}}\left(\prod_{\lfloor\textbf n\rfloor\leq \lfloor\textbf n_3\rfloor}\left(1+l_{\textbf n}^2\right)e^{-\frac {\delta}4\sum_{i\geq 3}\ln^{\sigma}\lfloor \textbf n_i\rfloor}\right)\\
&\leq&\label{022303}10\exp\left\{3\left(\frac{1600d}{\delta^2}\right)^\cdot \exp\left\{d{\left(\frac{8d}{\delta} \right)}^{\frac{1}{\sigma-1}}\right\}\right\}\nonumber\\&&\times\exp\left\{6d\left(\frac{16}{\delta}\right)^{\frac 1{\sigma-1}}\cdot\exp\left\{\left(\frac8{\delta}\right)^{\frac1\sigma}\right\}
\right\}\label{050904.},
\end{eqnarray}
where the last inequality is based on (\ref{041809}), (\ref{122401}) and (\ref{042807}).

In view of (\ref{050904}) and (\ref{050904.}), we finish the proof of (\ref{022302}).

When $\delta\geq \frac r2$, taking
\begin{equation*}
C_1(d)= \exp\left\{10d\left({8000d}\right)^d\cdot e^{20d^2}\right\},
\end{equation*}
we finish the proof of (\ref{050907}) by using $\sigma>2$ and $r\geq 1$.
\end{proof}
Furthermore, we have
\begin{lem}\label{052302}
Given $\sigma>2,r\geq 1,\rho\geq 0,\delta>0$ satisfying $\rho+\delta<r$, assume the Hamiltonian $R(q,\bar q)$ with the form of (\ref{052301}) satisfying $\left\|R\right\|_{\rho} < \infty$. Then for any $ \textbf m,\textbf l \in \mathbb{Z}^d$, we have
\begin{equation}\label{003}
\left\|\frac{\partial^{2}R}{\partial q_{\textbf m}\partial {q}_{\textbf l}}\right\|^{*}_{\rho+\delta},\left\|\frac{\partial^{2}R}{\partial q_{\textbf m}\partial \bar{q}_{\textbf l}}\right\|^{*}_{\rho+\delta}, \left\|\frac{\partial^{2}R}{\partial \bar q_{\textbf m}\partial \bar{q}_{\textbf l}}\right\|^{*}_{\rho+\delta} \leq C\left\|R\right\|_{\rho},
\end{equation}
where the constant $C$ is given by
\begin{equation*}
C=\left(\frac{12}{e\delta}\right)^2\cdot \exp\left\{\left(\frac{3600d}{\delta^2}\right)^d\cdot \exp\left\{d{\left(\frac{12d}\delta \right)}^{\frac{1}{\sigma-1}}\right\}\right\}.
\end{equation*}
\end{lem}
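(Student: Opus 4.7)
The plan is to mimic the single-derivative argument of Lemma \ref{063004}. By the symmetry between $q$ and $\bar q$ in the definitions of $\left\|\cdot\right\|_{\rho}$ and $\left\|\cdot\right\|^{*}_{\rho}$, it suffices to bound $\left\|\partial^{2} R/\partial q_{\textbf m}\partial q_{\textbf l}\right\|^{*}_{\rho+\delta}$; the other two mixed second derivatives are handled by exchanging the roles of $k$ and $k'$. Differentiating the series (\ref{052301}) term by term, the monomial $\mathcal{M}_{akk'}$ contributes a combinatorial factor bounded by $k_{\textbf m}k_{\textbf l}$ and a new monomial of the same shape with $k$ replaced by $k-e_{\textbf m}-e_{\textbf l}$. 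Using the equivalent form (\ref{041510}) of the norm $\left\|\cdot\right\|^{*}_{\rho+\delta}$, inserting the pointwise bound
$$\left|R_{akk'}\right|\leq\left\|R\right\|_{\rho}\exp\Bigl(\rho\Bigl(\sum_{\textbf n\in\mathbb{Z}^{d}}(2a_{\textbf n}+k_{\textbf n}+k'_{\textbf n})\ln^{\sigma}\lfloor\textbf n\rfloor-2\ln^{\sigma}\lfloor\textbf n_{1}^{*}(a,k,k')\rfloor\Bigr)\Bigr)$$
from (\ref{042602}), and substituting $I_{\textbf n}(0)=e^{-2r\ln^{\sigma}\lfloor\textbf n\rfloor}$ together with the extremal $|q_{\textbf n}|=e^{-(\rho+\delta)\ln^{\sigma}\lfloor\textbf n\rfloor}$ (legitimate since $\rho+\delta<r$), the task reduces to uniform control in $\textbf m,\textbf l$ of
$$\sum_{a,k,k'}k_{\textbf m}k_{\textbf l}\exp\Bigl(-\delta\sum_{\textbf n}(2a_{\textbf n}+k_{\textbf n}+k'_{\textbf n})\ln^{\sigma}\lfloor\textbf n\rfloor-2\rho\ln^{\sigma}\lfloor\textbf n_{1}^{*}\rfloor+(\rho+\delta)(\ln^{\sigma}\lfloor\textbf m\rfloor+\ln^{\sigma}\lfloor\textbf l\rfloor)\Bigr),$$
where the two additional positive weights in $\textbf m$ and $\textbf l$ are the cost of shifting $k\mapsto k-e_{\textbf m}-e_{\textbf l}$ in $|q|^{k}$.

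Next I would split into two cases in direct parallel with the proof of Lemma \ref{063004}. In the easy case $\lfloor\textbf m\rfloor,\lfloor\textbf l\rfloor\leq\lfloor\textbf n_{3}^{*}(a,k,k')\rfloor$, the excess $(\rho+\delta)(\ln^{\sigma}\lfloor\textbf m\rfloor+\ln^{\sigma}\lfloor\textbf l\rfloor)\leq 2(\rho+\delta)\ln^{\sigma}\lfloor\textbf n_{3}^{*}\rfloor$ is absorbed by two slices of the $\delta$-decay; the combinatorial factor $k_{\textbf m}k_{\textbf l}$ is then absorbed through two applications of the elementary one-variable bound behind (\ref{042805*}) (producing the prefactor $(12/e\delta)^{2}$), and the remaining series in $(a,k,k')$ is estimated by (\ref{041809}) and (\ref{122401}) exactly as in Case~1 of Lemma \ref{063004}. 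In the hard case, at least one of $\textbf m,\textbf l$ satisfies $\lfloor\cdot\rfloor>\lfloor\textbf n_{3}^{*}\rfloor$ and therefore lies in $\{\textbf n_{1}^{*},\textbf n_{2}^{*}\}$; applying Lemma \ref{005} converts the $-2\ln^{\sigma}\lfloor\textbf n_{1}^{*}\rfloor$ saving into the useful decay $\tfrac12\sum_{i\geq 3}\ln^{\sigma}\lfloor\textbf n_{i}^{*}\rfloor$, after which specifying $(\textbf n_{i}^{*})_{i\geq 3}$ together with $\textbf m,\textbf l$ determines $\textbf n_{1}^{*},\textbf n_{2}^{*}$ up to a bounded multiplicity factor, and the sum closes via (\ref{041809}), (\ref{122401}) and (\ref{042807}), mirroring Case~2 of Lemma \ref{063004}.

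The hard case is the main obstacle: in the worst subcase the excess weight is as large as $e^{2(\rho+\delta)\ln^{\sigma}\lfloor\textbf n_{1}^{*}\rfloor}$ while only $e^{-2\rho\ln^{\sigma}\lfloor\textbf n_{1}^{*}\rfloor}$ is directly available to cancel it, leaving a residual $2\delta\ln^{\sigma}\lfloor\textbf n_{1}^{*}\rfloor$ to be paid out of the $\delta$-budget. The clean way to manage this is to split $\delta$ once and for all into three positive shares — one to damp the extra $\textbf m,\textbf l$ weights, one to absorb the quadratic combinatorial factor $k_{\textbf m}k_{\textbf l}$, and one to make the series over $(\textbf n_{i}^{*})_{i\geq 3}$ converge — each share leading, after the routine geometric-series estimates, to the corresponding double-exponential factor appearing in $C$. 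This accounts for the stated constant having the same structural shape as in (\ref{050907.}), with an additional squared prefactor $(12/e\delta)^{2}$ coming from differentiating twice.
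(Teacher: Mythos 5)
Your proposal is correct and follows essentially the same route as the paper's Appendix proof of this lemma: differentiate twice to produce the combinatorial weight $k_{\textbf m}k_{\textbf l}$, bound $|R_{akk'}|$ via (\ref{042602}) and use $\rho+\delta<r$ to drop the $a$-dependent contribution, reduce to a uniform sum over $(a,k,k')$, and split according to whether both $\lfloor\textbf m\rfloor,\lfloor\textbf l\rfloor$ lie below $\lfloor\textbf n_3^*(a,k,k')\rfloor$ or not, closing with (\ref{042805*}), (\ref{041809}) and (\ref{122401}). The only cosmetic difference is that the paper splits your non-easy regime into two explicit subcases (both of $\textbf m,\textbf l$ exceed $\lfloor\textbf n_3^*\rfloor$, or exactly one does) and does the exponent bookkeeping directly in each to obtain decays $-\delta\sum_{i\geq 3}$ and $-\frac{\delta}{2}\sum_{i\geq 2}$ respectively, rather than routing everything through Lemma \ref{005} as you do; this only changes constants, not the substance of the argument.
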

\begin{proof}
The proof of this lemma will be given in Appendix.
\end{proof}
\begin{defn}\label{061502}
Fix $\sigma>2$ and $r\geq 1$. For any $\rho\geq 0$,
the Hamiltonian  $R(q,\bar q)$ is called $\rho$-T\"{o}plitz-Lipschitz if the following limits exist
\begin{eqnarray}\nonumber
\underset{t\rightarrow\infty}{\lim}\frac{\partial^{2}R}{\partial q_{\textbf n+t\textbf l}\partial q_{\textbf m-t\textbf l}},\quad \underset{t\rightarrow\infty}{\lim}\frac{\partial^{2}R}{\partial {q}_{\textbf n+t\textbf l}\partial \bar{q}_{\textbf m+t\textbf l}},\quad \underset{t\rightarrow\infty}{\lim}\frac{\partial^{2}R}{\partial \bar{q}_{\textbf n+t\textbf l}\partial \bar{q}_{\textbf m-t\textbf l}},
\end{eqnarray}
for any fixed $\textbf  n,\textbf m,\textbf l \in \mathbb{Z}^d$.
{Moreover, there exists $ K> 0$,\ such that when $ |t| > K $, there exists a universal positive constant $C$ such that the function $ R(q,\bar q)$ satisfies
\begin{eqnarray}\label{t1}
\left\|\frac{\partial^{2}R}{\partial q_{\textbf n+t\textbf l}\partial q_{\textbf m-t\textbf l}}-\underset{t\rightarrow\infty}{\lim}\frac{\partial^{2}R}{\partial q_{\textbf n+t\textbf l}\partial q_{\textbf m-t\textbf l}}\right\|^{*}_{\rho} \leq
\frac{C}{|t|},\\
\label{t2} \left\|\frac{\partial^{2}R}{\partial {q}_{\textbf n+t\textbf l}\partial \bar{q}_{\textbf m+t\textbf l}}-\underset{t\rightarrow\infty}{\lim}\frac{\partial^{2}R}{\partial {q}_{\textbf n+t\textbf l}\partial \bar{q}_{\textbf m+t\textbf l}}\right\|^{*}_{\rho} \leq \frac{C}{|t|},\\
\label{t3}\left\|\frac{\partial^{2}R}{\partial \bar{q}_{\textbf n+t\textbf l}\partial \bar{q}_{\textbf m-t\textbf l}}-\underset{t\rightarrow\infty}{\lim}\frac{\partial^{2}R}{\partial \bar{q}_{\textbf n+t\textbf l}\partial \bar{q}_{\textbf m-t\textbf l}}\right\|^{*}_{\rho}\leq \frac{C}{|t|}.
\end{eqnarray}}
\end{defn}

\begin{lem}(\textbf{Poisson Bracket})\label{010}
Let $\sigma>2,\rho>0$ and $$0<\delta_1,\delta_2<\min\left\{\frac14\rho,3-2\sqrt{2}\right\}.$$ Then one has
\begin{equation}\label{042704}
\left\|\{R_1,R_2\}\right\|_\rho\leq C\left\|R_1\right\|_{\rho-\delta_1}\left\|R_2\right\|_{\rho-\delta_2},
\end{equation}
where the constant $C$ is given by
\begin{equation*}
C=\frac{1}{\delta_2}\cdot\exp\left\{3\left(\frac{14400d}{\delta_1^2}\right)^d\cdot \exp\left\{d{\left(\frac{24d}{\delta_1} \right)}^{\frac{1}{\sigma-1}}\right\}\right\}.
\end{equation*}
\end{lem}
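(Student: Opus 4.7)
The plan is to expand $\{R_1,R_2\}$ monomial-by-monomial, bound each coefficient using the factor norms, and then control the resulting combinatorial sum by the decay that the $\delta_i$-margins free up. Writing
\[
\{R_1,R_2\}=\sqrt{-1}\sum_{\textbf n\in\mathbb{Z}^d}\left(\frac{\partial R_1}{\partial q_{\textbf n}}\frac{\partial R_2}{\partial \bar q_{\textbf n}}-\frac{\partial R_1}{\partial \bar q_{\textbf n}}\frac{\partial R_2}{\partial q_{\textbf n}}\right),
\]
a monomial with multi-index $(a,k,k')$ in the bracket comes from decompositions
\[
a=a^{(1)}+a^{(2)},\quad k=k^{(1)}+k^{(2)}-e_{\textbf n},\quad k'=k^{(1)'}+k^{(2)'}-e_{\textbf n},
\]
where $(a^{(i)},k^{(i)},k^{(i)'})$ indexes a monomial of $R_i$, $\textbf n$ is the bracket index, and an accompanying combinatorial factor $k^{(1)}_{\textbf n}k^{(2)'}_{\textbf n}$ comes from differentiation (with the obvious swap for the conjugate term). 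Mass and momentum conservations (\ref{051702}) and (\ref{050901}) are both preserved, so the bracket has the admissible form (\ref{052301}) and the norm of Definition \ref{083103} applies.

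The core analytic step is matching exponents. Since $\textbf n$ lies in both supports, $\ln^\sigma\lfloor\textbf n\rfloor\leq\min(\ln^\sigma\lfloor\textbf n^{*(1)}_1\rfloor,\ln^\sigma\lfloor\textbf n^{*(2)}_1\rfloor)$, and since $\textbf n^*_1(a,k,k')$ sits in the union of the two supports, $\ln^\sigma\lfloor\textbf n^*_1(a,k,k')\rfloor\leq\max(\ln^\sigma\lfloor\textbf n^{*(1)}_1\rfloor,\ln^\sigma\lfloor\textbf n^{*(2)}_1\rfloor)$. Bounding each $|R_{i,a^{(i)}k^{(i)}k^{(i)'}}|$ via Definition \ref{083103}, using $W=W^{(1)}+W^{(2)}-2\ln^\sigma\lfloor\textbf n\rfloor$ (with $W$ the total log-weight of $(a,k,k')$ and $W^{(i)}$ that of the $i$-th factor), and comparing against the target weight $\rho(W-2\ln^\sigma\lfloor\textbf n^*_1\rfloor)$, a short computation reduces the residual exponent to
\[
-\delta_1\bigl(W^{(1)}-2\ln^\sigma\lfloor\textbf n^{*(1)}_1\rfloor\bigr)-\delta_2\bigl(W^{(2)}-2\ln^\sigma\lfloor\textbf n^{*(2)}_1\rfloor\bigr).
\]
Lemma \ref{005} then bounds each parenthesised quantity from below by $\tfrac{1}{2}\sum_{j\geq 3}\ln^\sigma\lfloor\textbf n^{*(i)}_j\rfloor$, which is the one place where momentum conservation is genuinely used.

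It then remains to sum over all decompositions contributing to a fixed target $(a,k,k')$. The combinatorial multiplicity for splitting each $a_{\textbf m},k_{\textbf m},k'_{\textbf m}$ between the two factors is polynomial in the respective entries, and is absorbed using the elementary device $x e^{-cx}\leq(ce)^{-1}$ against the $\tfrac{\delta_i}{2}$-decay in the tail modes $(\textbf n^{*(i)}_j)_{j\geq 3}$. The differentiation factor $k^{(1)}_{\textbf n}k^{(2)'}_{\textbf n}$ is handled the same way; absorbing the factor $k^{(2)'}_{\textbf n}$ via this device costs $\tfrac{1}{\delta_2}$, which is the origin of the $\tfrac{1}{\delta_2}$ in the stated $C$. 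The remaining exponential series over $\textbf n$ and the tail modes is summed via the product estimates of Lemma \ref{a5} (the same ingredients as in the proof of Lemma \ref{063004}), yielding the displayed double-exponential constant. The main obstacle is the first-mode bookkeeping: verifying that the two comparison inequalities above leave enough $\delta_i$-margin under every admissible splitting; the smallness constraint $\delta_i<3-2\sqrt{2}$ is what guarantees convergence of the resulting geometric series in this absorption step.
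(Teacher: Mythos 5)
Your overall set-up is the same as the paper's: expand the bracket coefficient as a sum over the bracket index $\textbf j$ and over splittings of $(a,k,k')$ into $(a^{(1)},k^{(1)},k^{(1)'})+(a^{(2)},k^{(2)},k^{(2)'})$, match exponents via $W=W^{(1)}+W^{(2)}-2\ln^\sigma\lfloor\textbf j\rfloor$, and use Lemma~\ref{005} to convert the residual $-\delta_i(W^{(i)}-2\ln^\sigma\lfloor\textbf n^{*(i)}_1\rfloor)$ into tail decay $-\tfrac{1}{2}\delta_i\sum_{j\geq 3}\ln^\sigma\lfloor\textbf n^{*(i)}_j\rfloor$. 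That part is correct and is exactly the paper's equations (\ref{007})--(\ref{052801}).

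The gap is in the summation step. After you discard the cross-term $2\rho\bigl(\ln^\sigma\lfloor\textbf j\rfloor+\ln^\sigma\lfloor\textbf n_1^*(\alpha,\kappa,\kappa')\rfloor-\ln^\sigma\lfloor\textbf n^{*(1)}_1\rfloor-\ln^\sigma\lfloor\textbf n^{*(2)}_1\rfloor\bigr)\leq 0$, the only decay you have left lives in the modes $(\textbf n^{*(i)}_j)_{j\geq 3}$. This is genuinely insufficient to sum over the bracket index $\textbf j$: when $\textbf j$ is large and sits among the top two modes of each factor (a situation that does occur, e.g.\ $k^{(1)}_{\textbf j}=k^{(2)'}_{\textbf j}=1$ with all other entries of $\textbf j$ zero, so $\textbf j$ drops out of the target support entirely), neither tail sum sees $\textbf j$, and the sum over $\textbf j\in\mathbb{Z}^d$ does not converge on your decay alone. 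The paper resolves this with a case split on $\lfloor\textbf j\rfloor$ versus $\lfloor\textbf n^*_3(a,k,k')\rfloor$: when $\textbf j$ is small (Case 1.1) the discarded cross-term is \emph{not} thrown away but converted, via the hypothesis $\delta_1\leq\tfrac{1}{4}\rho$ and inequality (\ref{051001}), into additional decay in the first mode; when $\textbf j$ is large (Case 1.2 / Case 2) a separate combinatorial argument (Remark~\ref{042703}) is needed, namely that at least one of $\textbf n^{*(1)}_1,\textbf n^{*(1)}_2$ must lie in $\operatorname{supp}\mathcal{M}_{\alpha\kappa\kappa'}$ and the other is then pinned down by momentum conservation, so the count of admissible $\textbf j$ is controlled by $\#\operatorname{supp}\mathcal{M}_{\alpha\kappa\kappa'}$. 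None of this appears in your sketch.

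Relatedly, your attribution of the two smallness hypotheses is switched: the constraint $\delta_i<3-2\sqrt{2}$ serves only the elementary bounds $\tfrac{1}{1-e^{-x}}\leq x^{-2}$, $\ln\tfrac{1}{1-x}\leq\sqrt x$ used inside Lemma~\ref{a5}; it has nothing to do with ``first-mode bookkeeping.'' The ingredient that actually makes the first-mode bookkeeping close is $\delta_i\leq\tfrac{1}{4}\rho$ (via (\ref{051001})). Your heuristic that the $\tfrac{1}{\delta_2}$ factor comes from absorbing a single power $k^{(2)'}_{\textbf j}$ is also not quite how the paper gets it: both polynomial factors $\bigl(\sum_{i\geq 3}\ln^\sigma\lfloor\textbf n^{*(1)}_i\rfloor\bigr)\bigl(\sum_{i\geq 3}\ln^\sigma\lfloor\textbf n^{*(2)}_i\rfloor\bigr)$ are absorbed via (\ref{042805*}), producing a $\tfrac{1}{\delta_1\delta_2}$ of which only $\tfrac{1}{\delta_2}$ survives because the $\delta_1$-dependence is swallowed into the double-exponential constant.
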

\begin{proof}
The proof of this lemma will be given in Appendix.
\end{proof}

Next, we will estimate the symplectic transformation $\Phi_F$ induced by the Hamiltonian function $F$. Actually, we have
\begin{lem}(\textbf{Hamiltonian Flow})\label{E1}
Let $\rho>0$ and
$$0<\delta<\min \left\{\frac14\rho,3-2\sqrt{2}\right\}.$$
 Assume further \begin{equation}\label{042801} \frac{2e}{\delta}\cdot \exp\left\{3\left(\frac{14400d}{\delta^2}\right)^d\cdot \exp\left\{d{\left(\frac{24d}{\delta} \right)}^{\frac{1}{\sigma-1}}\right\}\right\}\left\|F\right\|_{\rho-\delta} <\frac 12.
\end{equation}
Then for any Hamiltonian function $H$, we get
\begin{equation}\label{052101}
\left\|H\circ\Phi_F\right\|_\rho
\leq\left(1+ \frac{4e}{\delta}\cdot\exp\left\{3\left(\frac{14400d}{\delta^2}\right)^d\cdot \exp\left\{d{\left(\frac{24d}{\delta} \right)}^{\frac{1}{\sigma-1}}\right\}\right\}\left\|F\right\|_{\rho-\delta}\right)
||H||_{\rho-\delta}.
\end{equation}
\end{lem}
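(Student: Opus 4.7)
\emph{Plan.} I would prove Lemma \ref{E1} by expanding $H\circ\Phi_F$ in its Lie series and iterating the Poisson bracket estimate (Lemma \ref{010}) with carefully chosen intermediate radii, so as to recover precisely the exponential factor appearing in (\ref{052101}).

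Since $\Phi_F=\Phi_F^1$ is the time-$1$ flow of the Hamiltonian vector field $X_F$, the standard identity $\frac{d}{dt}(H\circ\Phi_F^t)=\{H,F\}\circ\Phi_F^t$ iterates into the formal Lie series
\begin{equation*}
H\circ\Phi_F=\sum_{n=0}^{\infty}\frac{1}{n!}H^{(n)},\qquad H^{(0)}:=H,\qquad H^{(n)}:=\{H^{(n-1)},F\},
\end{equation*}
so that the bound on $\|H\circ\Phi_F\|_\rho$ will follow from a bound on each $\|H^{(n)}\|_\rho$ combined with a suitable summation.

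To bound $\|H^{(n)}\|_\rho$ I would fix an increasing sequence $\rho-\delta=\rho_0<\rho_1<\cdots\nearrow\rho$ with summable gaps and estimate $\|H^{(n)}\|_{\rho_n}$ inductively via Lemma \ref{010}. The crucial arrangement is to apply the Poisson bracket estimate with $R_1=F$ and $R_2=H^{(n-1)}$, so that the bad exponential factor $\exp\{3(14400d/\delta_1^2)^d\cdots\}$ depends only on the radius loss $\delta_1$ on $F$; by the monotonicity $\|F\|_{\rho'}\leq\|F\|_{\rho-\delta}$ for $\rho'\geq\rho-\delta$ (the unstarred analogue of Lemma \ref{060207}), this loss can be held of order $\delta$ throughout the iteration, matching the factor in (\ref{052101}) exactly. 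The polynomial factor $1/\delta_2$ in the bracket constant then absorbs the gap $\rho_n-\rho_{n-1}$, and iterating plus monotonicity gives $\|H^{(n)}\|_\rho\leq\|H^{(n)}\|_{\rho_n}\leq\bigl(\prod_{k=1}^nC_k\bigr)\|F\|_{\rho-\delta}^n\|H\|_{\rho-\delta}$.

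Choosing the radii so that $(\prod_{k=1}^n C_k)/n!\leq D^n$ for the uniform constant $D:=\frac{2e}{\delta}\exp\{3(14400d/\delta^2)^d\exp\{d(24d/\delta)^{1/(\sigma-1)}\}\}$, the smallness hypothesis (\ref{042801}) reads $\eta:=D\|F\|_{\rho-\delta}<1/2$, and the series sums to $\sum_{n\geq 0}\eta^n=\frac{1}{1-\eta}\leq 1+2\eta$, which is exactly (\ref{052101}). The main obstacle is this last step: the bookkeeping required to keep $\prod_{k=1}^nC_k\leq n!D^n$ is delicate because the exponential factor in each $C_k$ must remain essentially frozen (forcing $\delta_{1,k}$ to stay of order $\delta$, and hence the $\rho_k$ to lie in a narrow band near $\rho$), while the polynomial $1/\delta_{2,k}$ factors produced by the shrinking gaps $\rho_k-\rho_{k-1}$ must combine with $1/n!$ to produce a genuinely geometric series — balancing these two competing constraints in the choice of $(\rho_n)_{n\geq 0}$ is the technical heart of the proof.
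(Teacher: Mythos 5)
Your proposal is essentially the paper's own argument: expand $H\circ\Phi_F$ in the Lie series $\sum_{n\geq 0}\frac{1}{n!}H^{(n)}$, iterate Lemma~\ref{010} taking $R_1=F$ so the exponential factor depends only on the radius loss charged to $F$ (kept of order $\delta$), let the shrinking gaps charged to $H^{(n-1)}$ produce the $(2n/\delta)^n$ factor, absorb $n^n$ via $j^j<j!\,e^j$, and sum the geometric series under (\ref{042801}). The paper compresses the intermediate-radius bookkeeping into the one-line estimate (\ref{3.3}) with the phrase ``again and again,'' whereas you have articulated explicitly the balance that makes (\ref{3.3}) work; the content is the same.
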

\begin{proof}
 Firstly,  we expand $H\circ\Phi_F$ into the Taylor series
 \begin{equation}\label{3.2}
 H\circ\Phi_F=\sum_{n\geq 0}\frac{1}{n!}H^{(n)},
 \end{equation}
where $H^{(n)}=\{H^{(n-1)},F\}$ and $H^{(0)}=H$.

We will estimate $\left\|H^{(n)}\right\|_\rho$ by using Lemma \ref{010} again and again:
\begin{equation}
\left\|H^{(n)}\right\|_\rho
\label{3.3}\leq\left(\exp\left\{3\left(\frac{14400d}{\delta^2}\right)^d\cdot \exp\left\{d{\left(\frac{24d}{\delta} \right)}^{\frac{1}{\sigma-1}}\right\}\right\}
\left\|F\right\|_{\rho-\delta}\right)^n\left(\frac{2n}{\delta}\right)^n\left\|H\right\|_{\rho-{\delta}}.
\end{equation}
In view of (\ref{3.2}), (\ref{3.3}) and the following inequality
\begin{equation*}
j^j<j!e^j,
\end{equation*}one has
\begin{equation*}
\left\|H\circ\Phi_F\right\|_\rho
\leq \left(1+\frac{4e}{\delta}\cdot\exp\left\{3\left(\frac{14400d}{\delta^2}\right)^d\cdot \exp\left\{d{\left(\frac{24d}{\delta} \right)}^{\frac{1}{\sigma-1}}\right\}\right\}\left\|F\right\|_{\rho-\delta}\right)
\left\|H\right\|_{\rho-\delta},
\end{equation*}
which finishes the proof of (\ref{052101}).
\end{proof}

\section{KAM iteration}
\subsection{Derivation of homological equations}
According the basic idea of KAM theory (see \cite{Bourgain2005JFA} for example),
the proof of Main Theorem employs the rapidly
converging iteration scheme of Newton type to deal with small divisor problems
introduced by Kolmogorov, involving the infinite sequence of coordinate transformations.
At the $s$-th step of the scheme, a Hamiltonian
$H_{s} = N_{s} + R_{s}$
is considered, as a small perturbation of some normal form $N_{s}$. A transformation $\Phi_{s}$ is
set up so that
$$H_{s}\circ \Phi_{s} = N_{s+1} + R_{s+1}$$
with another normal form $N_{s+1}$ and a much smaller perturbation $R_{s+1}$. We drop the index $s$ of $H_{s}, N_{s}, R_{s}, \Phi_{s}$ and shorten the index $s+1$ as $+$.

Now consider the Hamiltonian $H$ of the form
\begin{eqnarray}\label{N1}
{H}=N+R,
\end{eqnarray}
where
\begin{equation*}
N=\sum_{\textbf n\in\mathbb{Z}^d}\left(\left\|\textbf n\right\|^2+\widetilde V_{\textbf n}\right)|q_{\textbf n}|^2,
\end{equation*}
and
\begin{equation*}
R=R_0+R_1+R_2
\end{equation*}
with $\left|\widetilde V_{\textbf n}\right|\leq 2$ for all $\textbf n\in\mathbb{Z}^d$,
\begin{eqnarray*}
{R}_0&=&\sum_{a,k,k'\in\mathbb{N}^{\mathbb{Z}^d}\atop\mbox{supp}\ k\bigcap \mbox{supp}\ k'=\emptyset}R_{akk'}\mathcal{M}_{akk'},\\
{R}_1&=&\sum_{\textbf n\in\mathbb{Z}^d}J_{\textbf n}\left(\sum_{a,k,k'\in\mathbb{N}^{\mathbb{Z}^d}\atop\mbox{supp}\ k\bigcap \mbox{supp}\ k'=\emptyset}R_{akk'}^{(\textbf n)}\mathcal{M}_{akk'}\right),\\
{R}_2&=&\sum_{\textbf n,\textbf m\in\mathbb{Z}^d}J_{\textbf n}J_{\textbf m}\left(\sum_{a,k,k'\in\mathbb{N}^{\mathbb{Z}^d}\atop\mbox{no assumption}}R_{akk'}^{(\textbf n,\textbf m)}\mathcal{M}_{akk'}\right).
\end{eqnarray*}
Here $J_{\textbf n}=|q_{\textbf n}|^2-I_{\textbf n}(0)$ for any $\textbf n\in\mathbb{Z}^d$.
We desire to eliminate the terms $R_0,R_1$ in (\ref{N1}) by the coordinate transformation $\Phi$, which is obtained as the time-1 map $X_F^{t}|_{t=1}$ of a Hamiltonian
vector field $X_F$ with $F=F_0+F_1$. Let ${F}_{0}$ (resp. ${F}_{1}$) has the form of ${R}_0$ (resp. ${R}_{1}$),
that is \begin{eqnarray}\label{052001}
&&{F}_0=\sum_{a,k,k'\in\mathbb{N}^{\mathbb{Z}^d}\atop\mbox{supp}\ k\bigcap \mbox{supp}\ k'=\emptyset}F_{akk'}\mathcal{M}_{akk'},\\
&&{F}_1=\sum_{\textbf n\in\mathbb{Z}^d}J_{\textbf n}\left(\sum_{a,k,k'\in\mathbb{N}^{\mathbb{Z}^d}\atop\mbox{supp}\ k\bigcap \mbox{supp}\ k'=\emptyset}F_{akk'}^{(\textbf n)}\mathcal{M}_{akk'}\right)\label{052002},
\end{eqnarray}
and the homological equations become
\begin{equation}\label{4.27}
\{N,{F}\}+R_0+R_{1}=[R_0]+[R_1],
\end{equation}
where
\begin{equation}\label{051501.}
[R_0]=\sum_{a\in\mathbb{N}^{\mathbb{Z}^d}}R_{a00}\mathcal{M}_{a00},
\end{equation}
and
\begin{equation}\label{051502}
[R_1]=\sum_{\textbf n\in\mathbb{Z}^d}J_{\textbf n}\sum_{a\in\mathbb{N}^{\mathbb{Z}^d}}R_{a00}^{(\textbf n)}\mathcal{M}_{a00}.
\end{equation}
Define $\widetilde{\omega}=(\widetilde{\omega}_{\textbf n})_{\textbf n\in\mathbb{Z}^d}$ with
\begin{equation*}
\widetilde{\omega}_{\textbf n}=\sum_{a\in\mathbb{N}^{\mathbb{Z}^d}}R_{a00}^{(\textbf n)}\mathcal{M}_{a00}
\end{equation*}
is the so-called frequency shift.
The solutions of the homological equations (\ref{4.27}) are given by
\begin{equation}\label{051304}
F_{akk'}=\frac{R_{akk'}}{\sum_{\textbf n\in\mathbb{Z}^d}\left(k_{\textbf n}-k^{'}_{\textbf n}\right)(\left\|\textbf n\right\|^2+\widetilde{V}_{\textbf n})},
\end{equation}
where
\begin{equation}\label{051305}
F_{akk'}^{(\textbf m)}=\frac{R_{akk'}^{(\textbf m)}}{\sum_{\textbf n\in\mathbb{Z}^d}\left(k_{\textbf n}-k^{'}_{\textbf n}\right)(\left\|\textbf n\right\|^2+\widetilde{V}_{\textbf n})},
\end{equation}
and the new Hamiltonian ${H}_{+}$ has the form
\begin{eqnarray*}
H_{+}\nonumber&=&H\circ\Phi\\
&=&\nonumber N+\{N,F\}+R_0+R_1\\
&&\nonumber+\int_{0}^1\{(1-t)\{N,F\}+R_0+R_1,F\}\circ X_F^{t}\ {d}{t}
+\nonumber R_2\circ X_F^1\\
&=&N_++R_+,
\end{eqnarray*}
where
\begin{equation}\label{051402}
N_+=N+[R_0]+[R_1],
\end{equation}
and
\begin{equation}\label{051403}
R_+=\int_{0}^1\{(1-t)\{N,F\}+R_0+R_1,F\}\circ X_F^{t}\ {d} t+R_2\circ X_F^1.
\end{equation}
\subsection{The new norm}To estimate
the solutions of the homological equation (\ref{4.27}), it is convenient to define a new norm for the Hamiltonian ${R}$ with the form of (\ref{N1}) by
\begin{equation*}
\left\|{R}\right\|_{\rho}^{+}=\max\left\{\left\|R_0\right\|_\rho^{+},\left\|R_1\right\|_\rho^{+}|,
\left\|R_2\right\|_\rho^{+}\right\},
\end{equation*}
where
\begin{equation}
\label{051302}\left\|R_0\right\|_\rho^{+}=\sup_{a,k,k'\in\mathbb{N}^{\mathbb{Z}^d}}\frac{\left|R_{akk'}\right|}
{e^{\rho\left(\sum_{\textbf n\in\mathbb{Z}^d}\left(2a_{\textbf n}+k_{\textbf n}+k_{\textbf n}'\right)\ln^{\sigma}\lfloor\textbf n\rfloor-2\ln^{\sigma}\lfloor\textbf n_1^{*}(a,k,k')\rfloor\right)}},
\end{equation}
\begin{equation}
\left\|R_1\right\|_\rho^{+}=\sup_{a,k,k'\in\mathbb{N}^{\mathbb{Z}^d}\atop \textbf m\in\mathbb{Z}^d}\frac{\left|R^{(\textbf m)}_{akk'}\right|}{e^{\rho\left(\sum_{\textbf n\in\mathbb{Z}^d}
\left(2a_{\textbf n}+k_{\textbf n}+k_{\textbf n}'\right)\ln^{\sigma}\lfloor{\textbf n}\rfloor+2\ln^{\sigma}\lfloor{\textbf m}\rfloor-2\ln^{\sigma}\lfloor{\textbf n}_1^{*}(a,k,k';\textbf m)\rfloor\right)}}\label{031920},
\end{equation}
and
\begin{equation}\label{031921}
\left\|R_2\right\|_\rho^{+}=\sup_{a,k,k'\in\mathbb{N}^{\mathbb{Z}^d}\atop
\textbf m_1,\textbf m_2\in\mathbb{Z}^d}\frac{\left|R^{(\textbf m_1,\textbf m_2)}_{akk'}\right|}
{e^{\rho\left(\sum_{\textbf n\in\mathbb{Z}^d}\left(2a_{\textbf n}+k_{\textbf n}+k_{\textbf n}'\right)\ln^{\sigma}\lfloor{\textbf n}\rfloor
+2\ln^{\sigma}\lfloor{\textbf m}_1\rfloor+2\ln^{\sigma}\lfloor{\textbf m}_2\rfloor-2\ln^{\sigma}\lfloor\textbf n_1^{*}(a,k,k';{\textbf m}_1,{\textbf m}_2)\rfloor\right)}},
\end{equation}
noting that
\begin{equation*}
\lfloor \textbf n_1^{*}(a,k,k';{\textbf m})\rfloor=\max\left\{\lfloor{\textbf n}_1^{*}(a,k,k')\rfloor,\lfloor{\textbf m}\rfloor\right\};
\end{equation*}
and
\begin{equation*}
\lfloor\textbf n_1^{*}(a,k,k';{\textbf m}_1,{\textbf m}_2)\rfloor=\max\left\{\lfloor{\textbf n}_1^{*}(a,k,k')\rfloor,\lfloor{\textbf m}_1\rfloor,\lfloor{\textbf m}_2\rfloor\right\}.
\end{equation*}

Then one has the following estimates:
\begin{lem}\label{051301}
Given any $\rho,\delta>0$ and a Hamiltonian $R$, one has
\begin{equation}\label{N6}
\left\|R\right\|_{\rho+\delta}^{+}\leq\exp\left\{10d\left(\frac{10}{\delta}\right)^{\frac 1{\sigma-1}}\cdot\exp\left\{\left(\frac{10}\delta\right)^{\frac1\sigma}\right\}
\right\}\left\|R\right\|_{\rho}
\end{equation}
and
\begin{equation}\label{N7}
\left\|R\right\|_{\rho+\delta}\leq\frac{64}{e^2\delta^2}\left\|R\right\|_{\rho}^{+}.
\end{equation}
\end{lem}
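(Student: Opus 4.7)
The two inequalities encode the equivalence, up to a loss $\delta$ in the weight parameter and an explicit combinatorial constant, between the plain norm $\|\cdot\|_{\rho}$ of Definition \ref{083103} (which catalogues all monomials $\mathcal{M}_{akk'}$) and the decorated norm $\|\cdot\|^{+}_{\rho}$ (which separately tracks how monomials distribute among $R_{0}, R_{1}, R_{2}$). Both proofs rest on the identity $|q_{\textbf n}|^{2}=I_{\textbf n}(0)+J_{\textbf n}$ together with its inverse $J_{\textbf n}=|q_{\textbf n}|^{2}-I_{\textbf n}(0)$, which translates between representations in $(I(0),q,\bar q)$ and representations in $(I(0),J,q,\bar q)$.

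For the easier direction (\ref{N7}), I start from $R=R_{0}+R_{1}+R_{2}$ and expand each $J_{\textbf m}$ to rewrite $R=\sum R'_{akk'}\mathcal{M}_{akk'}$. For a fixed target triple $(a,k,k')$ the coefficient $R'_{akk'}$ picks up a direct contribution $R_{akk'}$ from $R_{0}$ (when $k,k'$ have disjoint supports), plus contributions of the form $\pm R^{(\textbf m)}_{a-\delta_{\textbf m},k,k'}$ and $\pm R^{(\textbf m)}_{a,k-\delta_{\textbf m},k'-\delta_{\textbf m}}$ coming from $R_{1}$, where $\textbf m$ ranges over the finitely many indices with $a_{\textbf m}\ge 1$ or $k_{\textbf m}k'_{\textbf m}\ge 1$, plus analogous contributions from $R_{2}$ summed over pairs $(\textbf m_{1},\textbf m_{2})$. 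A direct inspection using the key design choice $\lfloor\textbf n_{1}^{*}(a,k,k';\textbf m)\rfloor=\max(\lfloor\textbf n_{1}^{*}(a,k,k')\rfloor,\lfloor\textbf m\rfloor)$ shows that each individual contribution is bounded by $\|R\|^{+}_{\rho}\,e^{\rho w(a,k,k')}$ with $w$ the target weight of Definition \ref{083103}. Summing the finitely many contributions via the elementary bound $\sup_{j\in\mathbb{N}} j\,e^{-\delta j}=(e\delta)^{-1}$ applied to the $\delta$-gain inside $e^{(\rho+\delta)w}$ then yields the stated constant $\tfrac{64}{e^{2}\delta^{2}}$, the square reflecting the two $J$-factors in $R_{2}$.

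For (\ref{N6}), I take each monomial $\mathcal{M}_{akk'}$ in the monomial expansion of $R$ and apply $q_{\textbf n}^{k_{\textbf n}}\bar q_{\textbf n}^{k'_{\textbf n}}=(I_{\textbf n}(0)+J_{\textbf n})^{j_{\textbf n}}\,q_{\textbf n}^{k_{\textbf n}-j_{\textbf n}}\bar q_{\textbf n}^{k'_{\textbf n}-j_{\textbf n}}$ with $j_{\textbf n}=\min(k_{\textbf n},k'_{\textbf n})$, then binomially expanding so that every term takes the shape (product of $I(0)$'s)$\,\times\,$(product of $J$'s)$\,\times\,\mathcal{M}_{a'k_{1}k_{1}'}$ with $k_{1},k_{1}'$ of disjoint supports. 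I classify these terms by the total $J$-degree $|c|$: those with $|c|=0,1$ go directly into $R_{0},R_{1}$, while those with $|c|\ge 2$ go into $R_{2}$ by designating two of the $J$'s as the outer factors $J_{\textbf m_{1}}J_{\textbf m_{2}}$ and re-absorbing the remaining $|c|-2$ factors via $J=|q|^{2}-I(0)$ back into the inner $\mathcal{M}$. A coefficient $R^{(\textbf m_{1},\textbf m_{2})}_{bll'}$ in the resulting reorganization is then expressed as a finite sum of original coefficients $R_{akk'}$ weighted by binomial factors, and the ratio between the $+$-norm exponential (which includes the extra terms $+2\ln^{\sigma}\lfloor\textbf m_{1}\rfloor+2\ln^{\sigma}\lfloor\textbf m_{2}\rfloor-2\ln^{\sigma}\lfloor\textbf n_{1}^{*}(b,l,l';\textbf m_{1},\textbf m_{2})\rfloor$) and the source exponential is controlled by Lemma \ref{005} together with the type of exponential sums $\sum_{\textbf n}e^{-\delta\ln^{\sigma}\lfloor\textbf n\rfloor}$ already used in the proof of Lemma \ref{063004}; this produces the double-exponential constant appearing in the right-hand side of (\ref{N6}).

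The principal obstacle will be the careful bookkeeping of $\textbf n_{1}^{*}$ under the reorganizations. In (\ref{N6}), extracting $J$-factors can relocate the dominant index of the original monomial from the inner $\mathcal{M}_{bll'}$ into the outer slots $\textbf m_{1},\textbf m_{2}$, so the design $\textbf n_{1}^{*}(b,l,l';\textbf m_{1},\textbf m_{2})=\max(\lfloor\textbf n_{1}^{*}(b,l,l')\rfloor,\lfloor\textbf m_{1}\rfloor,\lfloor\textbf m_{2}\rfloor)$ in (\ref{031921}) is essential for the two weights to remain comparable; the positive gap quantified by Lemma \ref{005} is precisely what allows the $\delta$-loss in the weight parameter to absorb the combinatorial proliferation coming from the binomial expansion and from the sum over the pairs $(\textbf m_{1},\textbf m_{2})$.
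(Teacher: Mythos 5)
Your proposal matches the paper's argument in all essential respects: both directions rest on the substitution $|q_{\textbf n}|^{2}=I_{\textbf n}(0)+J_{\textbf n}$, tracking how $\textbf n_{1}^{*}$ moves under the reorganization via the $\max$ convention built into (\ref{031920})--(\ref{031921}), and absorbing the quadratic combinatorial multiplicity through the exponential $\delta$-gain together with Lemma \ref{005} and the elementary bound $\sup x^{p}e^{-\delta x}\leq(p/e\delta)^{p}$. The only cosmetic difference is in (\ref{N6}): the paper organizes the expansion of $\prod(I_{\textbf n}(0)+J_{\textbf n})^{b_{\textbf n}}$ as a second-order Taylor expansion with an unexpanded tail (so that each term appears exactly once and the counting is literally $\prod(1+a_{\textbf n})$, a choice of $0\le\alpha_{\textbf n}\le a_{\textbf n}$), whereas you fully expand and then re-absorb the excess $J$'s, which requires a canonical designation of the two outer $J$-slots to avoid overcounting — a minor bookkeeping point, but the estimate one lands on is the same.
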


\begin{proof}
The details of the proof will be given in the Appendix.
\end{proof}
\begin{rem}
For $R_1$ (also $F_1$), one needs to define
 the system $\textbf {n}(a,k,k';\textbf m)$  by
\begin{equation*}
\textbf {n}(a,k,k';\textbf m):=\textbf {n}\left(a+e_{\textbf m},k,k'\right),
\end{equation*}
where $e_{\textbf m}$ is the unit vector with all components equal to zero but the $\textbf m$-th one equal to $1$. In another word, one can think that $J_{\textbf m}$ is replaced by $I_{m}(0)$ in $R_1$ and the responding coefficient has the same upper bound.
Similarly for $R_2$, one needs to define
 the system $\textbf {n}(a,k,k';\textbf m_1,\textbf m_2)$  by
\begin{equation*}
\textbf {n}(a,k,k';\textbf m_1,\textbf m_2):=\textbf n(a+e_{\textbf m_1}+e_{\textbf m_2},k,k').
\end{equation*}

Abusing the notations, we will denote
\begin{equation*}
\textbf {n}(a,k,k';\textbf m)=\textbf {n}(a,k,k')
\end{equation*}
and
\begin{equation*}
\textbf {n}(a,k,k';\textbf m_1,\textbf m_2)=\textbf {n}(a,k,k')
\end{equation*}
for simplicity in the next subsection.
\end{rem}
\subsection{KAM Iteration}\label{031501}

Now we give the precise
set-up of iteration parameters. For any $\sigma>2$, let $s\geq0$ be the $s$-th KAM
step.
 \begin{itemize}
 \item[]$\delta_{s}=\frac{\rho_0}{(s+4)\ln^2(s+4)},\qquad$ with $\rho_0= \frac{3-2\sqrt{2}}{100}$,

 \item[]$\rho_{s+1}=\rho_{s}+3\delta_s,$

 \item[]$\epsilon_s=\epsilon_{0}^{(\frac{3}{2})^s}$, which dominates the size of
 the perturbation,

 \item[]$\lambda_s=\epsilon_s^{0.01}$,

 \item[]$\eta_{s+1}=\frac{1}{20}\lambda_s\eta_s$, with $\eta_0=\lambda_0$,

 \item[]$d_{s+1}=d_{s}+\frac{1}{\pi^2(s+1)^2}$, with $d_0=0$,

 \item[]$D_s=\left\{(q_{\textbf n})_{\textbf n\in\mathbb{Z}^d}:\frac{1}{2}+d_s\leq\left|q_{\textbf n}\right|e^{r\ln^{\sigma}\lfloor \textbf n\rfloor}\leq1-d_s\right\}$.
 \end{itemize}
\begin{rem}
Then one has
\begin{equation*}
\sum_{s\geq 0}\delta_s\leq \frac53\rho_0,
\end{equation*}
\begin{equation*}
\rho_0\leq \rho_s\leq 6\rho_0<3-2\sqrt{2},\qquad \forall\ s\geq 0,
\end{equation*}
\begin{equation*}\delta_s<\min \left\{\frac14\rho_s,3-2\sqrt{2}\right\},\qquad \forall\ s\geq 0.
\end{equation*}
\end{rem}
Denote the complex cube of size $\lambda>0$:
\begin{equation*}\label{M9}
\mathcal{C}_{\lambda}\left(\widetilde{V}\right)=\left\{\left(V_{\textbf n}\right)_{\textbf n\in\mathbb{Z}^d}\in\mathbb{C}^{\mathbb{Z}^d}:\left|V_{\textbf n}-\widetilde{V}_{\textbf n}\right|\leq \lambda\right\}.
\end{equation*}

\begin{lem}{\label{IL}}
Suppose $H_{s}=N_{s}+R_{s}$ is real analytic on $D_{s}\times\mathcal{C}_{\eta_{s}}\left(V_{s}^*\right)$,
where $$N_{s}=\sum_{\textbf n\in\mathbb{Z}^d}\left(\left\|\textbf n\right\|^2+\widetilde V_{\textbf n,s}\right)\left|q_{\textbf n}\right|^2$$ is a normal form with
\begin{equation}\label{060401}
\widetilde V_{\textbf n,s}=\breve V_s+\widehat V_{\textbf n,s},
\end{equation}where $\breve V_s$ does not depend on $\textbf n$ and satisfies
\begin{equation}\label{061102}
\left|\breve V_s\right|\leq \sum_{i=0}^{s}\epsilon^{0.5}_i,
\end{equation}and $\widehat V_s=\left(\widehat V_{\textbf n,s}\right)_{\textbf n\in\mathbb{Z}^d}$ satisfies
\begin{eqnarray}
\label{198}&&\widehat{V}_{s}\left(V_{s}^*\right)=\omega,\\
\label{053190}&&\left|\widehat{V}_{\textbf n,s}\right|\leq \left(2\sum_{i=0}^s{\epsilon_{i}^{0.5}}\right)\frac1{\langle\textbf n\rangle},\\
\label{199}&&\left|\left|\frac{\partial \widehat{V}_s}{{\partial V}}-Id\right|\right|_{l^{\infty}\rightarrow l^{\infty}}<d_s\epsilon_{0}^{\frac{1}{10}}.
\end{eqnarray}
Assume that $R_{s}=R_{0,s}+R_{1,s}+R_{2,s}$ satisfies
\begin{eqnarray}
\label{200}&&\left\|R_{0,s}\right\|_{\rho_{s}}^{+}\leq \epsilon_{s}:=\varepsilon_{0s},\\
\label{201}&&\left\|R_{1,s}\right\|_{\rho_{s}}^{+}\leq \epsilon_{s}^{0.6}:=\varepsilon_{1s},\\
\label{202}&&\left\|R_{2,s}\right\|_{\rho_{s}}^{+}\leq (1+d_s)\epsilon_0:=\varepsilon_{2s}.
\end{eqnarray}
Moreover, assume that $R_s$ satisfies $\rho_s$--T\"{o}plitz-Lipschitz property
and for any fixed $\textbf  n,\textbf m,\textbf l \in \mathbb{Z}^d$, there exists $ K_s> 0$,\ such that when $ |t| > K_s $
\begin{eqnarray}\label{T1}
\left\|\frac{\partial^{2}R_{i,s}}{\partial q_{\textbf n+t\textbf l}\partial q_{\textbf m-t\textbf l}}-\underset{t\rightarrow\infty}{\lim}\frac{\partial^{2}R_{i,s}}{\partial q_{\textbf n+t\textbf l}\partial q_{\textbf m-t\textbf l}}\right\|^{*}_{\rho_s} \leq
\frac{\varepsilon_{is}}{|t|},\\
\label{T2} \left\|\frac{\partial^{2}R_{i,s}}{\partial {q}_{\textbf n+t\textbf l}\partial \bar{q}_{\textbf m+t\textbf l}}-\underset{t\rightarrow\infty}{\lim}\frac{\partial^{2}R_{i,s}}{\partial {q}_{\textbf n+t\textbf l}\partial \bar{q}_{\textbf m+t\textbf l}}\right\|^{*}_{\rho_s} \leq \frac{\varepsilon_{is}}{|t|},\\
\label{T3}\left\|\frac{\partial^{2}R_{i,s}}{\partial \bar{q}_{\textbf n+t\textbf l}\partial \bar{q}_{\textbf m-t\textbf l}}-\underset{t\rightarrow\infty}{\lim}\frac{\partial^{2}R_{i,s}}{\partial \bar{q}_{\textbf n+t\textbf l}\partial \bar{q}_{\textbf m-t\textbf l}}\right\|^{*}_{\rho_s}\leq \frac{\varepsilon_{is}}{|t|},
\end{eqnarray}for $i=0,1,2$.

Then for all $V\in\mathcal{C}_{\eta_{s}}\left(V_{s}^*\right)$ satisfying $\widehat V_{s}(V)\in\mathcal{C}_{\lambda_s}(\omega)$, there exists a real analytic symplectic coordinate transformations
$\Phi_{s+1}:D_{s+1}\rightarrow D_{s}$ satisfying
\begin{eqnarray}
\label{203}&&\left\|\Phi_{s+1}-id\right\|_{\sigma,r}\leq \epsilon_{s}^{0.5},\\
\label{204}&&\left\|D\Phi_{s+1}-Id\right\|_{(\sigma,r)\rightarrow(\sigma,r)}\leq \epsilon_{s}^{0.5},
\end{eqnarray}
such that for
$H_{s+1}=H_{s}\circ\Phi_{s+1}=N_{s+1}+R_{s+1}$, the same assumptions as above are satisfied with `$s+1$' in place of `$s$', where $\mathcal{C}_{\eta_{s+1}}\left(V_{s+1}^*\right)\subset\widehat V_{s}^{-1}(\mathcal{C}_{\lambda_s}(\omega))$ and
\begin{equation}\label{206}
\left\|\widehat{V}_{s+1}-\widehat{V}_{s}\right\|_{\infty}\leq\epsilon_{s}^{0.5},
\end{equation}
\begin{equation}\label{205}
\left\|V_{s+1}^*-V_{s}^*\right\|_{\infty}\leq2\epsilon_{s}^{0.5}.
\end{equation}
\end{lem}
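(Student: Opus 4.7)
The plan is to execute one step of the Newton-type KAM scheme outlined in Subsection 3.1. Given $H_s = N_s + R_s$, we solve the homological equation \eqref{4.27} for a generating Hamiltonian $F = F_0 + F_1$ of the forms \eqref{052001}--\eqref{052002}, define $\Phi_{s+1}$ as the time-$1$ map of $X_F$, and show that $H_{s+1} := H_s \circ \Phi_{s+1}$ splits as $N_{s+1} + R_{s+1}$ with the inductive hypotheses restored at level $s+1$. The block decomposition $R = R_0 + R_1 + R_2$ is preserved under this procedure: a monomial of $J_{\textbf n}$-depth $i$ Poisson-bracketed with one of depth $j$ produces terms of depth at most $i + j$, together with lower-depth contributions from $\{J_{\textbf n},\cdot\} = \{|q_{\textbf n}|^2,\cdot\}$, so the three pieces can be tracked separately throughout the iteration.

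The main quantitative input is the small-divisor estimate for the denominator $\sum_{\textbf n}(k_{\textbf n}-k_{\textbf n}')(\|\textbf n\|^2 + \widetilde V_{\textbf n,s})$ appearing in the solution formulas \eqref{051304}--\eqref{051305}. Writing $\widetilde V_{\textbf n,s} = \breve V_s + \widehat V_{\textbf n,s}$ and invoking mass conservation \eqref{051702} to kill the $\breve V_s$ contribution, integrality of $\|\textbf n\|^2$ reduces the problem to bounding $\tres\sum_{\textbf n}\ell_{\textbf n}\widehat V_{\textbf n,s}\tres$ from below with $\ell := k-k'$, and the hypothesis $\widehat V_s(V) \in \mathcal{C}_{\lambda_s}(\omega)$ lets us replace $\widehat V$ by $\omega$ up to a negligible error. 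The strong Diophantine conditions \eqref{040601}--\eqref{040602} then apply in the case split anticipated in the introduction: when $\|\textbf n_3^*(\ell)\| = \|\textbf n_2^*(\ell)\|$, momentum conservation \eqref{050901} yields \eqref{061601} and the polynomial factor $\prod(1+|\ell_{\textbf n}|^3\langle\textbf n\rangle^{d+4})$ arising from \eqref{040601} is absorbed by the $\delta_s$-gain in the $+$-norm via Lemma \ref{051301}; when $\|\textbf n_3^*(\ell)\| < \|\textbf n_2^*(\ell)\|$, condition \eqref{040602} applies, and because its right-hand side involves only the modes $\|\textbf n\| \leq \|\textbf n_3^*(\ell)\|$, the $-2\ln^\sigma\lfloor\textbf n_1^*\rfloor$ correction built into the $+$-norms \eqref{051302}--\eqref{031921} is precisely what accommodates it. The conclusion is a bound of the form $\|F\|^+_{\rho_s+\delta_s} \lesssim \epsilon_s^{0.9}$, which translates through \eqref{N7} into the plain-norm control needed downstream.

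With $F$ under control, Lemma \ref{063004} applied to $F$ furnishes \eqref{203}--\eqref{204} for $\Phi_{s+1}$, and Lemma \ref{E1} bounds $H_s \circ \Phi_{s+1}$. Substituting into the explicit formula \eqref{051403} for $R_{s+1}$ and applying the Poisson bracket Lemma \ref{010} together with the $J_{\textbf n}$-depth bookkeeping above, we recover the three size estimates $\varepsilon_{0,s+1} = \epsilon_{s+1}$, $\varepsilon_{1,s+1} = \epsilon_{s+1}^{0.6}$, and $\varepsilon_{2,s+1} = (1+d_{s+1})\epsilon_0$ required in \eqref{200}--\eqref{202}. The worst contribution is $\{R_{1,s},F_1\}$, whose $J_{\textbf n}$-derivatives land in $R_{2,s+1}$; here the $\langle\textbf n\rangle^{-1}$-summability from \eqref{053190} absorbs the additional factor $\epsilon_s^{1.2}$ into at most $\epsilon_0/(s+1)^2$, which is consistent with the target $(1+d_{s+1})\epsilon_0$. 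The iteration parameters $\delta_s,\rho_s,d_s,\lambda_s,\eta_s$ fixed at the beginning of Subsection 3.3 are tuned precisely so that the large constants produced by Lemmas \ref{063004}, \ref{010}, \ref{E1} and \ref{051301} are dominated by the quadratic Newton gain $\epsilon_s \to \epsilon_s^{3/2}$.

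The hardest step is verifying the structural hypotheses \eqref{198}--\eqref{199} on the new normal form and propagating the T\"{o}plitz--Lipschitz bounds \eqref{T1}--\eqref{T3}. The frequency shift $\widetilde V_{\textbf n,s+1} - \widetilde V_{\textbf n,s}$ is read off from $[R_{0,s}] + [R_{1,s}]$: using $J_{\textbf n} = |q_{\textbf n}|^2 - I_{\textbf n}(0)$ separates it into a constant piece contributing to $\breve V_{s+1}$ (so \eqref{061102} is preserved) and an $\textbf n$-dependent piece of size $\epsilon_s^{0.6}\langle\textbf n\rangle^{-1}$ (so \eqref{053190} is preserved and the $l^\infty\to l^\infty$ perturbation in \eqref{199} remains small). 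Inverting $\widehat V_{s+1}$ by a Neumann series then produces $V_{s+1}^*$ with $\widehat V_{s+1}(V_{s+1}^*) = \omega$, while the cube inclusion $\mathcal{C}_{\eta_{s+1}}(V_{s+1}^*) \subset \widehat V_s^{-1}(\mathcal{C}_{\lambda_s}(\omega))$ together with \eqref{206}--\eqref{205} follows from the choice $\eta_{s+1} = \lambda_s\eta_s/20$. Propagating the T\"{o}plitz--Lipschitz property is the most delicate piece: one must check that the limits in Definition \ref{061502} survive division by the homological denominators and then flow through every Poisson bracket and integral in \eqref{051403}, with the $C/|t|$ rate inherited and constants comparable to $\varepsilon_{i,s+1}$; this is carried out by adapting the scheme of Geng--Xu--You to the present $+$-norm framework, with the new threshold $K_{s+1}$ chosen large enough that the shifted indices $\textbf n \pm t\textbf l, \textbf m \pm t\textbf l$ separate from the support of the remaining $(a,k,k')$ in every monomial appearing in $F$.
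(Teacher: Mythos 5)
Your proposal follows the paper's own proof of Lemma \ref{IL} step for step: truncation of the homological data to the range \eqref{M1}, the small-divisor lower bound via the case split on $\|\textbf n_2^*(\ell)\|$ versus $\|\textbf n_3^*(\ell)\|$ feeding into \eqref{040601} or \eqref{040602}, solution of the homological equation in the $+$-norm and transfer through Lemma \ref{051301}, the Poisson-bracket decomposition of $\mathcal{R}_s$ sorted by $J_{\textbf n}$-depth into $R_{0,s+1},R_{1,s+1},R_{2,s+1}$, propagation of the T\"{o}plitz--Lipschitz estimates, and finally the frequency shift read off from $[R_{1,s}]$ with an inverse-function argument producing $V_{s+1}^*$. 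One small inaccuracy: the largest new contribution to $R_{2,s+1}$ is $\{R_{2,s},F_{1,s}\}$ (the paper's \eqref{051515}, bounded by $\epsilon_s^{0.5}$ in \eqref{N17}), not $\{R_{1,s},F_1\}$, and its absorption into $(1+d_{s+1})\epsilon_0$ uses simply that $\epsilon_s^{0.5}$ decays super-exponentially against the polynomial gap $d_{s+1}-d_s=1/(\pi^2(s+1)^2)$; the $\langle\textbf n\rangle^{-1}$ decay in \eqref{053190} enters only in the T\"{o}plitz--Lipschitz and frequency-shift analysis, not here.
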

\begin{proof}
\textbf {Step. 1. Truncation step.}

We would like to estimate $F_0$ and $F_1$, which are the solutions of the homological equation (see (\ref{052001}) and (\ref{052002})). Without loss of generality, we only consider $F_0$ below.
In the step $s\rightarrow s+1$, there is saving of a factor
\begin{equation*}
\mathcal{A}_1:=e^{-\delta_{s}\left(\sum_{{\textbf n}\in\mathbb{Z}^d}\left(2a_{\textbf n}+k_{\textbf n}+k'_{\textbf n}\right)\ln^{\sigma}\lfloor{\textbf n}\rfloor-2\ln^{\sigma}\lfloor{\textbf n}_1^{*}(a,k,k')\rfloor\right)}.
\end{equation*}
 Using (\ref{001}) in Lemma \ref{005},
one has
\begin{equation*}
\mathcal{A}_1\leq e^{-\frac12\delta_{s}\sum_{i\geq3}\ln^{\sigma}\lfloor{\textbf n} _i^{*}(a,k,k')\rfloor}.
\end{equation*}
 Recalling after this step, we need
\begin{eqnarray*}
\left\|R_{0,s+1}\right\|_{\rho_{s+1}}^{+}\leq \epsilon_{s+1}
\end{eqnarray*}
and
\begin{eqnarray*}
\left\|R_{1,s+1}\right\|_{\rho_{s+1}}^{+}\leq \epsilon_{s+1}^{0.6}.
\end{eqnarray*}
Consequently, in $R_{0,s}$ and $R_{1,s}$, it suffices to eliminate the nonresonant monomials $\mathcal{M}_{akk'}$ for which
\begin{equation*}
e^{-\frac12\delta_{s}\sum_{i\geq3}\ln^{\sigma}\lfloor \textbf n_i^*(a,k,k')\rfloor}\geq\epsilon_{s+1}.
\end{equation*}
That is to say
\begin{equation}\label{M1}
 \sum_{i\geq3}\ln^{\sigma}\lfloor{\textbf n}_i^{*}(a,k,k')\rfloor\leq\frac{2(s+4)\ln^2(s+4)}{\rho_0}\cdot\ln\frac{1}{\epsilon_{s+1}}:=B_s.
\end{equation}
Hence we finished the truncation step.

\textbf {Step. 2. Estimate the lower bound of the small divisor.}

For any $\textbf{n}(a,k,k')$ the divisor
\begin{equation*}
\frac{1}{\sum_{\textbf n\in\mathbb{Z}^d}(k_{\textbf n}-k_{\textbf n}')(\left\|\textbf n\right\|^2+\widetilde V_{\textbf n})}
\end{equation*}will appear when estimating $F_0$ by (\ref{051304}).
 If
\begin{equation}\label{051701}
\left\|\textbf n_2^{*}(a,k,k')\right\|=\left\|\textbf n_3^{*}(a,k,k')\right\|,
\end{equation}
we will use the nonresonant conditions (\ref{040601}); otherwise
we will use the nonresonant conditions (\ref{040602}).
Now we will prove there is a lower bound on the right hand side of (\ref{040601}) and (\ref{040602}) respectively where the condition (\ref{M1}) satisfies.

Firstly we assume that the condition (\ref{051701}) holds. In view of momentum conservation (\ref{050901}) and (\ref{051701}), one has
\begin{equation*}
\left\|\textbf n_1^{*}(a,k,k')\right\|\leq \sum_{i\geq 2}\left\|\textbf n_i^{*}(a,k,k')\right\|\leq 2\sum_{i\geq 3}\left\|\textbf n_i^{*}(a,k,k')\right\|,
\end{equation*}
which implies
\begin{equation}\label{051704}
\lfloor\textbf n_1^{*}(a,k,k')\rfloor\leq  2\sum_{i\geq 3}\lfloor\textbf n_i^{*}(a,k,k')\rfloor.
\end{equation}
Then we have
\begin{eqnarray*}
&&\sum_{\textbf n\in\mathbb{Z}^d}\left|k_{\textbf n}-k'_{\textbf n}\right|\ln^{\sigma}\lfloor \textbf n\rfloor\\
&\leq&\sum_{i\geq 1}\ln^{\sigma}\lfloor \textbf n_i^*(a,k,k')\rfloor\\
&\leq &\ln^{\sigma}\left(2\sum_{i\geq 3}\lfloor\textbf n_i^{*}(a,k,k')\rfloor\right)+2\sum_{i\geq 3}\ln^{\sigma}\lfloor\textbf n_i^{*}(a,k,k')\rfloor\qquad \mbox{(by (\ref{051701}) and (\ref{051704}))}\\
&\leq&5\sum_{i\geq 3}\ln^{\sigma}\lfloor\textbf n_i^{*}(a,k,k')\rfloor,
\end{eqnarray*}
where the last inequality follows from (\ref{51504}) in Remark \ref{51505}.

Let
\begin{equation}\label{030701}N_s=B_s^{\frac{\sigma-1}{d\sigma}}\cdot \left(\ln B_s\right)^{-\frac{\sigma}d}
\end{equation} and then one has
\begin{eqnarray}
&&\nonumber\prod_{\textbf n\in\mathbb{Z}^d\atop (\ref{M1})\ satisfies}\frac{1}{1+|k_{\textbf n}-k'_{\textbf n}|^3\langle \textbf n\rangle^{d+4}}\\
\nonumber&=&\exp\left\{\sum_{\left\|\textbf n\right\|\leq N_s\atop (\ref{M1})\ satisfies}\ln\left(\frac{1}{1+|k_{\textbf n}-k'_{\textbf n}|^3\langle \textbf n\rangle^{d+4}}\right)\right\}\\
&&\nonumber\times\exp\left\{\sum_{\left\|\textbf n\right\|>N_s\atop (\ref{M1})\ satisfies}\ln\left(\frac{1}{1+|k_{\textbf n}-k'_{\textbf n}|^3\langle \textbf n\rangle^{d+4}}\right)\right\}\\
\nonumber
&\geq& \exp\left\{-10d\sum_{\left\|\textbf n\right\|\leq N_s\atop (\ref{M1})\ satisfies}\left|k_{\textbf n}-k'_{\textbf n}\right|^{\frac1{\sigma}}\ln\lfloor\textbf n\rfloor\right\}\\
&&\times\exp\left\{-10d\sum\limits_{\left\|\textbf n\right\|>N_s\atop (\ref{M1})\ satisfies}|k_{\textbf n}-k'_{\textbf n}|\ln\lfloor\textbf n\rfloor\right\}\nonumber\\
\nonumber&\geq& \exp\left\{-50d\cdot(3N_s)^{d} B^{\frac1\sigma}_s-50dB_s(\ln N_s)^{1-\sigma}\right\}\qquad\qquad \ \mbox{(in view of (\ref{M1}))}\\
\label {M6}&\geq& \exp\left\{-{100\cdot6^dB_s\left(\ln B_s\right)^{1-\sigma}}\right\},
\end{eqnarray}where the last inequality is based on (\ref{030701}).

Furthermore,
by (\ref{M1}) one has
\begin{eqnarray}
&&\nonumber100\cdot 6^dB_s\left(\ln B_s\right)^{1-\sigma}\\
&=&\nonumber100\cdot 6^d\left(\frac{2(s+4)\ln^2(s+4)}{\rho_0}\cdot\ln\frac{1}{\epsilon_{s+1}}\right)\left(\ln\left(\frac{2(s+4)\ln^2(s+4)}{\rho_0}\cdot\ln\frac{1}{\epsilon_{s+1}}\right)\right)^{1-\sigma}\\
&=&\ln\frac{1}{\epsilon_{s}}\cdot\left(100\cdot 6^d\cdot\frac{2(s+4)\ln^2(s+4)}{\rho_0}\cdot\frac32\right)\nonumber\\
&&\times\left(\ln\left(\frac{2(s+4)\ln^2(s+4)}{\rho_0}\cdot\left(\frac32\right)^{s+1}\ln\frac{1}{\epsilon_{0}}\right)\right)^{1-\sigma}\label{022402}.
\end{eqnarray}
Note that
\begin{equation}
\ln\left(\frac{2(s+4)\ln^2(s+4)}{\rho_0}\cdot\left(\frac32\right)^{s+1}\ln\frac{1}{\epsilon_{0}}\right)\geq (s+1)\ln\frac32 +\ln\ln\frac1{\epsilon_0},
\end{equation}
and then one has
\begin{equation}\label{053110}
\left(100\cdot 6^d\cdot\frac{2(s+4)\ln^2(s+4)}{\rho_0}\cdot\frac32\right)\left((s+1)\ln\frac32 +\ln\ln\frac1{\epsilon_0}\right)^{1-\sigma}\leq 0.01
\end{equation}
where using $\sigma>2$ and $\epsilon_0$ is sufficiently small depending on $d$ only. Furthermore by (\ref{M6})-(\ref{053110}), we have
\begin{equation}
\prod_{\textbf n\in\mathbb{Z}^d\atop (\ref{M1})\ satisfies}\frac{1}{1+|k_{\textbf n}-k'_{\textbf n}|^3\langle \textbf n\rangle^{d+4}}
 \geq\exp\left\{-0.01\cdot \ln \frac{1}{\epsilon_s}\right\}=\epsilon_s^{0.01}
\label{M7}.
\end{equation}
Following the proof of (\ref{M7}), one has
\begin{equation}\label{M8}
\prod_{\textbf n\in\mathbb{Z}^d\atop \left\|\textbf n\right\|\leq \left\|\textbf n_3^*(a,k,k')\right\|}\left(\frac{1}{1+|k_{\textbf n}-k'_{\textbf n}|^3\langle \textbf n\rangle^{d+7}}\right)^{10}\geq \epsilon_s^{0.01}.
\end{equation}
\textbf{Step. 3. The solutions of homological equation.}

In view of (\ref{060401}) and noting that $\breve V_{s}$ does not depend on $\textbf n$, one has
\begin{equation}
\sum_{\textbf{n}\in\mathbb{Z}^d}\left(k_{\textbf n}-k'_{\textbf n}\right)\left(\left\|\textbf n\right\|^2+\widetilde V_{\textbf n}\right)=\sum_{\textbf{n}\in\mathbb{Z}^d}\left(k_{\textbf n}-k'_{\textbf n}\right)\left(\left\|\textbf n\right\|^2+\widehat V_{\textbf n}\right).
\end{equation}
By (\ref{198}), one gets
\begin{eqnarray}
&&\nonumber\left|\sum_{\textbf{n}\in\mathbb{Z}^d}\left(k_{\textbf n}-k'_{\textbf n}\right)\left(\left\|\textbf n\right\|^2+\widehat V_{\textbf n}\left(V^*_s\right)\right)\right|\\
&=&\nonumber\left|\sum_{\textbf{n}\in\mathbb{Z}^d}\left(k_{\textbf n}-k'_{\textbf n}\right)\left(\left\|\textbf n\right\|^2+\omega_{\textbf n}\right)\right|\\
&\geq &\label{060402}\left|\left|\left|\sum_{\textbf{n}\in\mathbb{Z}^d}\left(k_{\textbf n}-k'_{\textbf n}\right)\omega_{\textbf n}\right|\right|\right|.
\end{eqnarray}
Using (\ref{M7})-(\ref{060402}), we have
\begin{equation}\label{060403}
\left|\frac1{\sum_{\textbf{n}\in\mathbb{Z}^d}\left(k_{\textbf n}-k'_{\textbf n}\right)\left(\left\|\textbf n\right\|^2+\widetilde V_{\textbf n}(V_s^*)\right)}\right|\leq \epsilon^{-0.01}_s.
\end{equation}
Recalling $\lambda_s=\epsilon_s^{0.01}$, if $V\in \mathcal{C}_{\lambda_s}(\widehat{V}_s)$, the estimate (\ref{060403}) remains true when substituting $V$ for $\widehat{V}_s$. Moreover, there is analyticity on $\mathcal{C}_{\lambda_s}(\widehat{V}_s)$. The transformations $\Phi_{s+1}$ is obtained as the time-1 map $X_{F_s}^{t}|_{t=1}$ of the Hamiltonian
vector field $X_{F_s}$ with $F_s=F_{0,s}+F_{1,s}$.
Using (\ref{M7}) and (\ref{M8}) we get
\begin{eqnarray}\label{022410}
\left\|F_{i,s}\right\|_{\rho_s}^{+}\leq {\gamma}^{-1} \epsilon_s^{-0.01}\left\|R_{i,s}\right\|_{\rho_s}^{+},\qquad i=0,1.
\end{eqnarray}
In view of (\ref{N7}) in Lemma \ref{051301}, (\ref{200}), (\ref{201}) and (\ref{022410}), we get
\begin{equation}\label{053120}
\left\|F_{0,s}\right\|_{\rho_s+\delta_s}\leq\frac{64}{e^2\delta_s^2}\left\|F_{0,s}\right\|_{\rho_s}^{+}\leq \epsilon_s^{0.95},
\end{equation}
and
\begin{equation}\label{053130}
\left\|F_{1,s}\right\|_{\rho_s+\delta_s}\leq\frac{64}{e^2\delta_s^2}\left\|F_{1,s}\right\|_{\rho_s}^{+}\leq \epsilon_s^{0.55}.
\end{equation}
Based on (\ref{053120}) and (\ref{053130}) one has
\begin{equation}\label{022501}
\left\|F_{s}\right\|_{\rho_s+\delta_s}\leq \epsilon_s^{0.55}
\end{equation}
and
\begin{equation}\label{Liu4}
\sup_{q\in D_s}\|X_{F_s}\|_{\sigma,r}\leq\epsilon_{s}^{0.52},
\end{equation}
which follows from (\ref{050907}) in Lemma \ref{063004}.

Since $$\epsilon_{s}^{0.52}\ll\frac{1}{\pi^2(s+1)^2}=d_{s+1}-d_s,$$ we have $\Phi_{s+1}:D_{s+1}\rightarrow D_{s}$ with
\begin{equation}\label{Liu4}
\left\|\Phi_{s+1}-id\right\|_{\sigma,r}\leq\sup_{q\in D_s}\|X_{F_s}\|_{\sigma,r}<\epsilon_{s}^{0.52},
\end{equation}
which is the estimate (\ref{203}). Moreover, from (\ref{Liu4}) we get
\begin{equation*}
\sup_{q\in D_s}\left\|DX_{F_s}-Id\right\|_{(\sigma,r)\rightarrow(\sigma,r)}\leq\frac{1}{d_s}\epsilon_{s}^{0.52}<\epsilon_{s}^{0.5},
\end{equation*}
and thus the estimate (\ref{204}) follows.

\textbf{Step. 4. The estimate of the remainder terms.}

Now we will estimate $R_{s+1}$. Recalling (\ref{051403}), $R_{s+1}=R_{2,s}+\mathcal{R}_s$, where
\begin{equation*}
\mathcal{R}_s=\int_{0}^1\{(1-t)\{N_s,F_s\}+R_{0,s}+R_{1,s},F_s\}\circ X_{F_s}^{t}dt+\int_0^1\{R_{2,s},F_s\}\circ X_{F_s}^tdt.
\end{equation*}
Rewrite
\begin{equation*}
R_{s+1}=R_{0,s+1}+R_{1,s+1}+R_{2,s+1},
\end{equation*}
and
\begin{eqnarray}
\mathcal{R}_s
&=&\label{051510}\int_0^1\{R_{0,s},F_s\}\circ X_{F_s}^tdt\\
&&\label{051511}+\int_0^1\{R_{1,s},F_s\}\circ X_{F_s}^tdt\\
&&\label{051513}+\int_0^1\{R_{2,s},F_s\}\circ X_{F_s}^tdt\\
&&\label{051520}+\int_0^1(1-t)\{\{N_s,F_s\},F_s\}\circ X_{F_s}^tdt.
\end{eqnarray}

Firstly we consider the term  $(\ref{051510})$. Note that the term $(\ref{051510})$ contributes to $R_{0,s+1},R_{1,s+1},R_{2,s+1}$ and we get
\begin{eqnarray}
&&\nonumber\left|\left|\int_0^1\{R_{0,s},F_s\}\circ X_{F_s}^tdt\right|\right|_{\rho_s+2\delta_s}
\nonumber\\ \nonumber&=&\left|\left|\sum_{n\geq1}\frac{1}{n!}\underbrace{\left\{\cdots\left\{R_{0,s},{F_s}\right\},\cdots,{F_s}\right\}}_{n-\mbox{fold}}\right|\right|_{\rho_s+2\delta_s}\\
\nonumber&\leq& \frac{4e}{\delta_s}\exp\left\{3\cdot\left(\frac{14400d}{\delta_s^2}\right)^{d}\cdot\exp\left\{d\left(\frac{24d}{\delta_s}\right)^{\frac1{\sigma-1}}\right\}\right\}\left\|{F_s}\right\|_{\rho_s+\delta_s}\left\|R_{0,s}\right\|_{\rho_s+\delta_s}\ \end{eqnarray}
where the last inequality follows from the proof of Lemma \ref{E1}.

In view of
\begin{equation*}
\delta_s=\frac{\rho_0}{(s+4)\ln^2(s+4)}
\end{equation*}
and by (\ref{042805*}),
there exists a constant $C(d)$ depending on $d$ only such that
\begin{eqnarray}
&&\nonumber\frac{4e}{\delta_s}\exp\left\{3\cdot
\left(\frac{14400d}{\delta_s^2}\right)^{d}\cdot\exp\left\{d\left(\frac{24d}{\delta_s}\right)^{\frac1{\sigma-1}}\right\}\right\}\\
&\leq&\nonumber\exp\left\{\exp\left\{C(d)\left({(s+4)\ln^2{(s+4)}}\right)^{\frac1{\sigma-1}}\right\}\right\}\\
&\leq&\label{052102}\exp\left\{0.01\cdot\left|\ln \epsilon_0\right|\cdot \exp\left\{s\ln\frac32\right\}\right\}=\epsilon_s^{-0.01},
\end{eqnarray}
where the last inequality is based on $\sigma>2$ and $\epsilon_0$ is very small depend on $d$.
Using (\ref{200}),  (\ref{022501}) and (\ref{052102}), one has
\begin{equation}\label{051203}
\left|\left|\int_0^1\{R_{0,s},F_s\}\circ X_{F_s}^tdt\right|\right|_{\rho_s+2\delta_s}\leq \epsilon_s^{1.54}.
\end{equation}
Following the proof of (\ref{052102}), we have
\begin{equation}\label{053140}
\exp\left\{10d\left(\frac{10}{\delta_s}\right)^{\frac 1{\sigma-1}}\cdot\exp\left\{\left(\frac{10}{\delta_s}\right)^{\frac1\sigma}\right\}
\right\}\leq \epsilon_s^{-0.01}.
\end{equation}
Consequently from (\ref{N6}), (\ref{051203}) and (\ref{053140}), one has
\begin{eqnarray}
\left|\left|\int_0^1\{R_{0,s},F_s\}\circ X_{F_s}^tdt\right|\right|_{\rho_s+3\delta_s}^{+}
\leq\epsilon_s^{1.53}\label{N11}.
\end{eqnarray}

Secondly, we consider the term (\ref{051511}) and write
\begin{eqnarray}
(\ref{051511})&=&
\nonumber\sum_{n\geq1}\frac{1}{n!}\underbrace{\{\cdots\{R_{1,s},{F_s}\},{F_s},\cdots,{F_s}\}}_{n-\mbox{fold}}\\
&=&\label{051505}\sum_{n\geq1}\frac{1}{n!}\underbrace{\{\cdots\{R_{1,s},{F}_{0,s}\},{F_s},\cdots,{F_s}\}}_{(n-1)-\mbox{fold}}\\
&&\label{051512}+\{R_{1,s},F_{1,s}\}\\
&&\label{051506}+\sum_{n\geq2}\frac{1}{n!}\underbrace{\{\cdots\{R_{1,s},F_{1,s}\},{F_s},\cdots,{F_s}\}}_{(n-1)-\mbox{fold}}.
\end{eqnarray}
Note that (\ref{051505}) and (\ref{051506}) contribute to $R_{0+},R_{1+},R_{2+}$, and (\ref{051512}) contributes to $R_{1+},R_{2+}$.
Moreover, following the proof of (\ref{N11}), one has
\begin{eqnarray}
\label{N12}\left|\left|(\ref{051505})\right|\right|_{\rho_s+3\delta_s}^{+},\left|\left|(\ref{051506})\right|\right|_{\rho_s+3\delta_s}^{+}
&\leq& \epsilon_{s}^{1.53},\\
\left|\left|(\ref{051512})\right|\right|_{\rho_s+3\delta_s}^{+}&\leq&\epsilon_s^{1.1}.
\end{eqnarray}

Thirdly, we consider the term (\ref{051513}) and write
\begin{eqnarray}
(\ref{051513})&=&\nonumber\sum_{n\geq1}\frac{1}{n!}\underbrace{\{\cdots\{R_{2,s},{F_s}\},{F_s},\cdots,{F_s}\}}_{n-\mbox{fold}}\\
&=&\label{051514}\{R_{2,s},F_{0,s}\}\\
&&+\label{051515}\{R_{2,s},F_{1,s}\}\\
&&+\label{051516}\sum_{n\geq2}\frac{1}{n!}\underbrace{\{\cdots\{R_{2,s},{F}_{0,s}\},{F_s},\cdots,{F_s}\}}_{(n-1)-\mbox{fold}}\\
&&\label{051517}+\{\{R_{1,s},F_{1,s}\},F_s\}\\
&&\label{051518}+\sum_{n\geq3}\frac{1}{n!}\underbrace{\{\cdots\{R_{2,s},F_{1,s}\},{F_s},\cdots,{F_s}\}}_{(n-1)-\mbox{fold}}.
\end{eqnarray}
Note that (\ref{051514}) and (\ref{051517}) contribute to $R_{1+},R_{2+}$, (\ref{051516}) and (\ref{051518}) contribute to $R_{0+},R_{1+},R_{2+}$, and (\ref{051515}) contributes to $R_{2+}$.

Similarly, one has
\begin{eqnarray}
\label{N15}||(\ref{051514})||_{\rho_s+3\delta_s}^{+},\left|\left|(\ref{051517})\right|\right|_{\rho_s+3\delta_s}^{+}&\leq& \epsilon_s^{0.93},\\
\label{N16}\left|\left|(\ref{051516})\right|\right|_{\rho_s+3\delta_s}^{+},\left|\left|(\ref{051518})\right|\right|_{\rho_s+3\delta_s}^{+}
&\leq& \epsilon_s^{1.53}
\end{eqnarray}
and
\begin{equation}
\label{N17}||(\ref{051515})||_{\rho_s+3\delta_s}^{+}
\leq\epsilon_s^{0.5}.
\end{equation}

Finally, we consider the term (\ref{051520}) and
write
\begin{eqnarray}
\nonumber(\ref{051520})&=&\sum_{n\geq2}\frac{1}{n!}\underbrace{\{\cdots\{N_s,{F_s}\},{F_s},\cdots,{F_s}\}}_{n-\mbox{fold}}\\
&=&\nonumber\sum_{n\geq2}\frac{1}{n!}\underbrace{\{\cdots\{-R_{0,s}-R_{1,s}+[R_{0,s}]+[R_{1,s}]\},{F},\cdots,{F}\}}_{(n-1)-\mbox{fold}}\qquad \\
&&\nonumber\mbox{(in view of (\ref{4.27}))}\\
&=&\label{051530}\sum_{n\geq2}\frac{1}{n!}\underbrace{\{\cdots\{-R_{0,s}+[R_{0,s}],{F_s}\},{F_s},\cdots,{F_s}\}}_{(n-1)-\mbox{fold}}\\
&&+\label{051531}\sum_{n\geq2}\frac{1}{n!}\underbrace{\{\cdots\{-R_{1,s}+[R_{1,s}],{F}_{0,s}\},{F_s},\cdots,{F_s}\}}_{(n-2)-\mbox{fold}}\\
&&\label{051532}+\{-R_{1,s}+[R_{1,s}],F_{1,s}\}\\
&&\label{051533}+\sum_{n\geq3}\frac{1}{n!}\underbrace{\{\cdots\{-R_{1,s}+[R_{1,s}],F_{1,s}\},{F_s},\cdots,{F_s}\}}_{(n-1)-\mbox{fold}}.
\end{eqnarray}
Note that (\ref{051530}),  (\ref{051531})  and (\ref{051533}) contribute to $R_{0+},R_{1+},R_{2+}$, and (\ref{051532}) contributes to $R_{1+},R_{2+}$.
Moreover, one has
\begin{eqnarray}
\left|\left|(\ref{051530})\right|\right|_{\rho_s+3\delta_s}^{+},\left|\left|(\ref{051531})\right|\right|_{\rho_s+3\delta_s}^{+},\left|\left|(\ref{051533})\right|\right|_{\rho_s+3\delta_s}^{+}
&\leq&\epsilon_s^{1.53}
\end{eqnarray}
and
\begin{equation}
\left|\left|(\ref{051532})\right|\right|_{\rho_s+3\delta_s}^{+}\leq\epsilon_s^{0.93}.
\end{equation}

Consequently we get
\begin{eqnarray*}
||R_{0,s+1}||_{\rho_{s+1}}^{+}
&\leq&10\epsilon_s^{1.53}\leq \epsilon_{s+1},\\
||R_{1,s+1}||_{\rho_{s+1}}^{+}
&\leq& 10\epsilon_s^{0.93}\leq \epsilon_{s+1}^{0.6}
\end{eqnarray*}
and
\begin{eqnarray*}
||R_{2,s+1}||_{\rho_{s+1}}^{+}
\leq(1+d_s)\epsilon_0+\epsilon_s^{0.5}\leq(1+d_{s+1})\epsilon_0,
\end{eqnarray*}
which are just the assumptions (\ref{200})-(\ref{202}) at stage $s+1$.

\textbf{Step. 5. $(\rho_s+2\delta_s)$-T\"{o}plitz-Lipschitz property of the solution of homological equation}

Recall the solutions of homological equation are given in (\ref{052001}) and (\ref{052002}).
Without loss of generality, it suffices to prove that
given any $\textbf n,\textbf m,\textbf l\in\mathbb{Z}^d$ and $t\in\mathbb{Z}$ the following limit exists
\begin{equation*}
\lim_{t\rightarrow\infty}\frac{\partial^2 F_{0,s}}{\partial q_{\textbf n+t\textbf l}\partial{\bar q_{\textbf m+t\textbf l}}}
\end{equation*}
and there exists $K_{s+1}$ such that when $|t|>K_{s+1}$
\begin{eqnarray}\label{060206}
\left\|\frac{\partial^2F_{0,s}}{\partial q_{\textbf n+t\textbf l}\partial{\bar q_{\textbf m-t\textbf l}}}-\lim_{t\rightarrow\infty}\frac{\partial^2F_{0,s}}{\partial q_{\textbf n+t\textbf l}\partial{\bar q_{\textbf m-t\textbf l}}}\right\|_{\rho_s+2\delta_s}^*
\leq\frac{\epsilon_{s}^{0.95}}{|t|}.
\end{eqnarray}
For any $a,k,k'\in\mathbb{N}^{\mathbb{Z}^d}$, we suppose that
\begin{equation*}
\prod_{i\geq 3}\lfloor\textbf n_{i}^*(a,k,k')\rfloor \leq \epsilon_s^{-0.01},
\end{equation*}
which follows the proof of (\ref{M7}).

Note that if
\begin{equation}\label{052401}
|t|\geq 2\left(\epsilon_s^{-0.02}+\left\|\textbf n\right\|^2+\left\|\textbf m\right\|^2\right),
\end{equation}
then one has
\begin{equation*}
\lfloor \textbf n+t\textbf l\rfloor,\lfloor \textbf m+t\textbf l\rfloor> \epsilon_s^{-0.01}\geq \lfloor\textbf n_3^*(a,k,k')\rfloor.
\end{equation*}In view of momentum conservation (\ref{050901}), we have
\begin{equation*}
\frac{\partial^2\mathcal{M}_{akk'}}{\partial q_{\textbf n+t\textbf l}\partial{\bar q_{\textbf m-t\textbf l}}}=0
\end{equation*}unless
\begin{equation*}
k_{\textbf n+t\textbf l}=k'_{\textbf m+t\textbf l}=1.
\end{equation*}
Here we assume that
\begin{equation*}
\textbf n+t\textbf l=\textbf n_1^*(a,k,k'),\qquad \textbf m+t\textbf l=\textbf n_2^*(a,k,k').
\end{equation*}
Furthermore, we have
\begin{equation}\label{052103}
\left\|\textbf n+t\textbf l\right\|^2-\left\|\textbf m+t\textbf l\right\|^2=2t\left(\textbf n-\textbf m\right)\cdot\textbf l+\left\|\textbf n\right\|^2-\left\|\textbf m\right\|^2.
\end{equation}
\textbf{Case 1.}
 \begin{equation*}
\left(\textbf n-\textbf m\right)\cdot\textbf l\neq 0.
\end{equation*}
In view of (\ref{052401}) and (\ref{052103}), one has
\begin{equation}\label{053150}
\sum_{\textbf j\in\mathbb{Z}^d}\left(k_{\textbf j}-k'_{\textbf j}\right)\left(\left\|\textbf j\right\|^2+\widetilde V_{\textbf j,s}\right)\geq |t|.
\end{equation}
Following the proof of (\ref{052102}), one gets
\begin{equation}\label{053160}
\left(\frac{12}{e\delta_s}\right)^2\cdot \exp\left\{\left(\frac{3600d}{\delta_s^2}\right)^d\cdot \exp\left\{d{\left(\frac{12d}{\delta_s} \right)}^{\frac{1}{\sigma-1}}\right\}\right\}\leq \epsilon_s^{-0.01}.
\end{equation}
Then we have
\begin{eqnarray*}
&&\left\|\frac{\partial^2}{{\partial q_{\textbf n+t\textbf l}\partial \bar q_{\textbf m-t\textbf l}}}\left(\sum_{a,k,k'\in\mathbb{N}^{\mathbb{Z}^d}}F_{akk'}{ \mathcal{M}_{akk'}}\right)\right\|_{\rho_s+2\delta_s}^*\\
&=&\left\|\sum_{a,k,k'\in\mathbb{N}^{\mathbb{Z}^d}}\frac{R_{akk'}}{\sum_{\textbf j\in\mathbb{Z}^d}\left(k_{\textbf j}-k'_{\textbf j}\right)\left(\left\|\textbf j\right\|^2+\widetilde V_{\textbf j}\right)}\cdot\frac{ \partial^2\mathcal{M}_{akk'}}{\partial q_{\textbf n+t\textbf l}\partial \bar q_{\textbf m+t\textbf l}}\right\|_{\rho_s+2\delta_s}^*\\
&\leq&\frac{\left\|R_0\right\|^*_{\rho_s+2\delta_s}}{|t|}\label{052202}\qquad (\mbox{by (\ref{053150})})\\
&\leq&\frac{\epsilon_s^{-0.01}\left\|R_0\right\|_{\rho_s+\delta_s}}{|t|}\qquad  (\mbox{by (\ref{003}) in Lemma \ref{052302} and (\ref{053160})})\\
&\leq&\frac{\epsilon_s^{0.98}}{|t|},
\end{eqnarray*}
where the last inequality uses (\ref{N7}) and (\ref{200}).

Hence one has
\begin{equation*}
\lim_{t\rightarrow\infty}\frac{\partial^2F_{0,s}}{{\partial q_{\textbf n+t\textbf l}\partial \bar q_{\textbf m-t\textbf l}}}=0
\end{equation*}
and
\begin{equation*}
\left\|\frac{\partial^2F_{0,s}}{{\partial q_{\textbf n+t\textbf l}\partial \bar q_{\textbf m-t\textbf l}}}-\lim_{t\rightarrow\infty}\frac{\partial^2F_{0,s}}{{\partial q_{\textbf n+t\textbf l}\partial \bar q_{\textbf m-t\textbf l}}}\right\|_{\rho_s+2\delta_s}^*\leq \frac{\epsilon_s^{0.98}}{|t|}.
\end{equation*}

\textbf{Case 2.}

\begin{equation*}
\left(\textbf n-\textbf m\right)\cdot\textbf l=0.
\end{equation*}
In view of (\ref{052103}), one has
\begin{eqnarray*}
&&\lim_{t\rightarrow\infty}\sum_{\textbf j\in\mathbb{Z}^d}\left(k_{\textbf j}-k'_{\textbf j}\right)\left(\left\|\textbf j\right\|^2+\widetilde V_{\textbf j,s}\right)\\
&=& \sum_{\textbf j\in\mathbb{Z}^d\atop \textbf j\neq \textbf n+t\textbf l,\textbf m+t\textbf l}\left(k_{\textbf j}-k'_{\textbf j}\right)\left(\left\|\textbf j\right\|^2+\widetilde V_{\textbf j,s}\right)+\left\|\textbf n\right\|^2-\left\|\textbf m\right\|^2\\
&:=&\mathcal{A}^{-1}
\end{eqnarray*}
and then
\begin{eqnarray*}
\lim_{t\rightarrow\infty}F_{akk'}
=\mathcal{A}\cdot{\lim_{t\rightarrow\infty}R_{akk'}}.
\end{eqnarray*}
Furthermore, we have
\begin{eqnarray*}
&&\left|F_{akk'}-\lim_{t\rightarrow\infty}F_{akk'}\right|\\
&\leq &\left|\left(\frac{1}{\sum_{\textbf j\in\mathbb{Z}^d}(k_{\textbf j}-k'_\textbf j)(\left\|\textbf j\right\|^2+\widetilde V_{\textbf j,s})}-\mathcal{A}\right)R_{akk'}\right|\\
&&+\left|\left(\frac{1}{\sum_{\textbf j\in\mathbb{Z}^d}(k_{\textbf j}-k'_\textbf j)(\left\|\textbf j\right\|^2+\widetilde V_{\textbf j,s})}\right)\left(R_{akk'}-\lim_{t\rightarrow\infty}R_{akk'}\right)\right|\\
&=&\left|\frac{\left(\widehat V_{\textbf n+t\textbf l,s}-\widehat V_{\textbf m+t\textbf l,s}\right)R_{akk'}}{\left(\sum_{\textbf j\in\mathbb{Z}^d}(k_{\textbf j}-k'_\textbf j)(\left\|\textbf j\right\|^2+\widetilde V_{\textbf j,s})\right)^2}\right|\\
&&+\left|\left(\frac{1}{\sum_{\textbf j\in\mathbb{Z}^d}(k_{\textbf j}-k'_\textbf j)(\left\|\textbf j\right\|^2+\widetilde V_{\textbf j,s})}\right)\left(R_{akk'}-\lim_{t\rightarrow\infty}R_{akk'}\right)\right|.
\end{eqnarray*}
In view of (\ref{053190}), one has
\begin{equation}\label{061101}
\left|\widehat V_{\textbf n+t\textbf l,s}-\widehat V_{\textbf m+t\textbf l,s}\right|\leq \frac4{|t|}.
\end{equation}
Using (\ref{060403}) and (\ref{061101}) one gets
\begin{equation}\label{060405}
\left|\frac{\left(\widehat V_{\textbf n+t\textbf l,s}-\widehat V_{\textbf m+t\textbf l,s}\right)R_{akk'}}{\left(\sum_{\textbf j\in\mathbb{Z}^d}(k_{\textbf j}-k'_\textbf j)(\left\|\textbf j\right\|^2+\widetilde V_{\textbf j,s})\right)^2}\right|\leq \frac{4\epsilon_s^{-0.02}\left|R_{akk'}\right|}{|t|}.
\end{equation}
By (\ref{060403}) again,  we have
\begin{equation}\label{060406}
\left|\left(\frac{1}{\sum_{\textbf j\in\mathbb{Z}^d}(k_{\textbf j}-k'_\textbf j)(\left\|\textbf j\right\|^2+\widetilde V_{\textbf j,s})}\right)\left(R_{akk'}-\lim_{t\rightarrow\infty}R_{akk'}\right)\right|\leq \epsilon_s^{-0.01}\left|R_{akk'}-\lim_{t\rightarrow\infty}R_{akk'}\right|.
\end{equation}
Using (\ref{T1}), (\ref{060405}) and (\ref{060406}), we obtain
\begin{eqnarray}
\nonumber\left\|\frac{\partial^2F_{0,s}}{\partial q_{\textbf n+t\textbf l}\partial{\bar q_{\textbf m-t\textbf l}}}-\lim_{t\rightarrow\infty}\frac{\partial^2F_{0,s}}{\partial q_{\textbf n+t\textbf l}\partial{\bar q_{\textbf m-t\textbf l}}}\right\|_{\rho_s+2\delta_s}^*
\leq\frac{\epsilon_{s}^{0.95}}{|t|}.
\end{eqnarray}

Finally, we point out that
in view of momentum conservation (\ref{050901}), one has
\begin{equation}\label{060103}
\frac{\partial^2 \mathcal{M}_{akk'}}{\partial q_{\textbf n+t\textbf l}\partial{\bar q_{\textbf m+t\textbf l}}}=0,
\end{equation}if
\begin{equation}\left\|\textbf n-\textbf m\right\|\geq \epsilon_s^{-0.01}+1.
\end{equation}

\textbf{Step 6. $\rho_{s+1}$--T\"{o}plitz-Lipschitz property of the remainder term}

It suffices to prove that $\left\{F_{0,s},R_{0,s}\right\}$ satisfies $\rho_{s+1}$--T\"{o}plitz-Lipschitz property. For simplicity, we write $F_{0,s}:=F$ and $R_{0,s}:=R$.

Given any $\textbf n,\textbf m.\textbf l\in\mathbb{Z}^d$ and $t\in\mathbb{Z}$, one has
\begin{eqnarray}\nonumber
\frac{\partial^{2}\{F,R\}}{\partial {q}_{{\textbf n}+t\textbf l}\partial \bar{q}_{{\textbf m}+t\textbf l}}
 &=&\frac{\partial^{2}\underset{\textbf j\in\mathbb{Z}^{d}}{\sum}\left(\frac{\partial F}{\partial q_{\textbf j}}\frac{\partial R}{\partial \bar{q}_{\textbf j}}-\frac{\partial F }{\partial\bar{q}_{\textbf j}}\frac{\partial R}{\partial{q}_{\textbf j}}\right)}{\partial {q}_{{\textbf n}+t\textbf l}\partial \bar{q}_{{\textbf m}+t\textbf l}}\\
 &= &\label{060101.}
  \underset{\textbf j\in\mathbb{Z}^{d}}{\sum}\left(
\frac{\partial^{2}F}{\partial \bar{q}_{\textbf m+t\textbf l}\partial{q}_{\textbf j+t\textbf l}} \cdot\frac{\partial^{2} R}{\partial \bar{q}_{\textbf j+t\textbf l}\partial{q}_{\textbf n+t\textbf l}}\right)\\
&&+\underset{\textbf j\in\mathbb{Z}^{d}}{\sum}\left(\frac{\partial^{2}F}{\partial q_{\textbf n+t\textbf l}\partial{q}_{\textbf j-t\textbf l}} \cdot\frac{\partial^{2} R }{\partial \bar{q}_{\textbf j-t\textbf l}\partial\bar{q}_{\textbf m+t\textbf l}}\right)\\
 &&-\underset{\textbf j\in\mathbb{Z}^{d}}{\sum}\left(
\frac{\partial^{2}F}{\partial \bar{q}_{\textbf m+t\textbf l}\partial\bar{q}_{\textbf j-t\textbf l}} \cdot\frac{\partial^{2} R }{\partial {q}_{\textbf j-t\textbf l}\partial{q}_{\textbf n+t\textbf l}}\right)\\
 &&- \underset{\textbf j\in\mathbb{Z}^{d}}{\sum}\left(
\frac{\partial^{2}F}{\partial {q}_{\textbf n+t\textbf l}\partial\bar{q}_{\textbf j+t\textbf l}} \cdot\frac{\partial^{2} R }{\partial {q}_{\textbf j+t\textbf l}\partial\bar{q}_{\textbf m+t\textbf l}}\right)\\
 &&\label{060102}+\underset{\textbf j\in\mathbb{Z}^{d}}{\sum}\left(
\frac{\partial^{3}F}{\partial q_{\textbf n+t\textbf l}\partial\bar{q}_{\textbf m+t\textbf l}\partial{q}_{\textbf j}} \cdot\frac{\partial R }{\partial \bar{q}_{\textbf j}}\right)\\&&
+\sum_{\textbf j\in\mathbb{Z}^d}\left(
\frac{\partial F}{\partial{q}_{\textbf j}} \cdot\frac{\partial^{3} R}{\partial \bar{q}_{\textbf j}\partial{q}_{\textbf n+t\textbf l}\partial\bar{q}_{\textbf m+t\textbf l}}\right)\\
 &&-\underset{\textbf j\in\mathbb{Z}^{d}}{\sum}\left(
\frac{\partial F}{\partial\bar{q}_{\textbf j}} \cdot\frac{\partial^{3} R }{\partial {q}_{\textbf j}\partial{q}_{\textbf n+t\textbf l}\partial\bar{q}_{\textbf m+t\textbf l}}\right)\\&&-\sum_{\textbf j\in\mathbb{Z}^d}\left(\frac{\partial^{3}F}{\partial {q}_{\textbf n+t\textbf l}\partial\bar{q}_{\textbf m+t\textbf l}\partial\bar{q}_{\textbf j}} \cdot\frac{\partial R }
{\partial {q}_{\textbf j}}\right).
\end{eqnarray}
Without loss of generality we only need to show the terms (\ref{060101.}) and (\ref{060102}) satisfies $\rho_{s+1}$--T\"{o}plitz-Lipschitz property.

 Now we will consider (\ref{060101.}) firstly. In view of (\ref{060103}), one has
\begin{equation}\label{060105}
\left\|\textbf m-\textbf j\right\|\leq \epsilon_{s}^{-0.01}\qquad \mbox{and }\qquad \left\|\textbf n-\textbf j\right\|\leq \epsilon_{s}^{-0.01},
\end{equation}
otherwise
\begin{equation}\label{060201}
\frac{\partial^{2}F}{\partial \bar{q}_{\textbf m+t\textbf l}\partial{q}_{\textbf j+t\textbf l}} \cdot\frac{\partial^{2} R}{\partial \bar{q}_{\textbf j+t\textbf l}\partial{q}_{\textbf n+t\textbf l}}=0.
\end{equation}
Denote that
\[
\underset{t\rightarrow\infty}{\lim}\frac{\partial^{2}F}{\partial \bar{q}_{{\textbf m}+t\textbf l}\partial {q}_{{\textbf j}+t\textbf l}} = F^{11}_{\textbf j \textbf m},\qquad \underset{t\rightarrow\infty}{\lim}\frac{\partial^{2}R}{\partial {q}_{{\textbf n}+t\textbf l}\partial \bar{q}_{{\textbf j}+t\textbf l}} = R^{11}_{\textbf n \textbf j}.\]
Then one has
\begin{eqnarray}\label{060202}
\lim_{t\rightarrow\infty}\underset{\textbf j\in\mathbb{Z}^{d}}{\sum}\left(
\frac{\partial^{2}F}{\partial \bar{q}_{\textbf m+t\textbf l}\partial{q}_{\textbf j+t\textbf l}} \cdot\frac{\partial^{2} R}{\partial \bar{q}_{\textbf j+t\textbf l}\partial{q}_{\textbf n+t\textbf l}}\right)=\sum_{\textbf j\in\mathbb{Z}^d\atop (\ref{060105})\ satisfies} F^{11}_{\textbf j\textbf m}R^{11}_{\textbf n\textbf j}.
\end{eqnarray}
Next we will estimate
\begin{equation}\label{060203}
\underset{\textbf j\in\mathbb{Z}^{d}}{\sum}\left(
\frac{\partial^{2}F}{\partial \bar{q}_{\textbf m+t\textbf l}\partial{q}_{\textbf j+t\textbf l}} \cdot\frac{\partial^{2} R}{\partial \bar{q}_{\textbf j+t\textbf l}\partial{q}_{\textbf n+t\textbf l}}\right)-\lim_{t\rightarrow\infty}\underset{\textbf j\in\mathbb{Z}^{d}}{\sum}\left(
\frac{\partial^{2}F}{\partial \bar{q}_{\textbf m+t\textbf l}\partial{q}_{\textbf j+t\textbf l}} \cdot\frac{\partial^{2} R}{\partial \bar{q}_{\textbf j+t\textbf l}\partial{q}_{\textbf n+t\textbf l}}\right).
\end{equation}
In view of (\ref{060201}) and (\ref{060202}), one has
\begin{eqnarray*}
\left\|(\ref{060203})\right\|_{\rho_{s+1}}^*&=&\left\|\sum_{\textbf j\in\mathbb{Z}^d\atop (\ref{060105})\ satisfies}\left(\frac{\partial^{2}F}{\partial \bar{q}_{\textbf m+t\textbf l}\partial{q}_{\textbf j+t\textbf l}} \cdot\frac{\partial^{2} R}{\partial \bar{q}_{\textbf j+t\textbf l}\partial{q}_{\textbf n+t\textbf l}}- F^{11}_{\textbf j\textbf m}R^{11}_{\textbf n\textbf j}\right)\right\|_{\rho_{s+1}}^*\\
&\leq&\left\|\sum_{\textbf j\in\mathbb{Z}^d\atop (\ref{060105})\ satisfies}\left(\frac{\partial^{2}F}{\partial \bar{q}_{\textbf m+t\textbf l}\partial{q}_{\textbf j+t\textbf l}}- F^{11}_{\textbf j\textbf m}\right)R^{11}_{\textbf n\textbf j}\right\|_{\rho_{s+1}}^*\\
&&+\left\|\sum_{\textbf j\in\mathbb{Z}^d\atop (\ref{060105})\ satisfies}\frac{\partial^{2}F}{\partial \bar{q}_{\textbf m+t\textbf l}\partial{q}_{\textbf j+t\textbf l}} \cdot\left(\frac{\partial^{2} R}{\partial \bar{q}_{\textbf j+t\textbf l}\partial{q}_{\textbf n+t\textbf l}}- R^{11}_{\textbf n\textbf j}\right)\right\|_{\rho_{s+1}}^*\\
&\leq&\sum_{\textbf j\in\mathbb{Z}^d\atop (\ref{060105})\ satisfies}\left\|\frac{\partial^{2}F}{\partial \bar{q}_{\textbf m+t\textbf l}\partial{q}_{\textbf j+t\textbf l}}- F^{11}_{\textbf j\textbf m}\right\|_{\rho_{s+1}}^*\left\|R^{11}_{\textbf n\textbf j}\right\|_{\rho_{s+1}}^*\\
&&+\sum_{\textbf j\in\mathbb{Z}^d\atop (\ref{060105})\ satisfies}\left\|\frac{\partial^{2}F}{\partial \bar{q}_{\textbf m+t\textbf l}\partial{q}_{\textbf j+t\textbf l}}\right\|_{\rho_{s+1}}^* \cdot\left\|\frac{\partial^{2} R}{\partial \bar{q}_{\textbf j+t\textbf l}\partial{q}_{\textbf n+t\textbf l}}- R^{11}_{\textbf n\textbf j}\right\|_{\rho_{s+1}}^*,
\end{eqnarray*}
where the last inequality is based on (\ref{002}) in Lemma \ref{l2}.

On one hand, using (\ref{060206}) one has
\begin{equation}
\left\|\frac{\partial^{2}F}{\partial \bar{q}_{\textbf m+t\textbf l}\partial{q}_{\textbf j+t\textbf l}}- F^{11}_{\textbf j\textbf m}\right\|_{\rho_{s+1}}^*\leq \frac{\epsilon_s^{0.95}}{|t|}
\end{equation}
and
\begin{equation}
\left\|\frac{\partial^{2} R}{\partial \bar{q}_{\textbf j+t\textbf l}\partial{q}_{\textbf n+t\textbf l}}- R^{11}_{\textbf n\textbf j}\right\|_{\rho_{s+1}}^*\leq \frac{\epsilon_s}{|t|}.
\end{equation}
which follows (\ref{T1}) and (\ref{060208}) in Lemma \ref{060207}.

On the other hand, using (\ref{003}) in Lemma \ref{052302} one gets
\begin{equation*}
\left\|\frac{\partial^{2}F}{\partial \bar{q}_{\textbf m+t\textbf l}\partial{q}_{\textbf j+t\textbf l}}\right\|_{\rho_{s+1}}^* \leq \epsilon_{s}^{0.95}
\end{equation*}
and
\begin{equation*}
\left\|R^{11}_{\textbf n\textbf j}\right\|_{\rho_{s+1}}^* \leq \epsilon_{s}^{0.95},
\end{equation*}
which follows from
\begin{equation*}
\left\|\frac{\partial^2R}{\partial q_{\textbf n+t\textbf l}\partial \bar q_{\textbf j+t\textbf l}}\right\|_{\rho_{s+1}}^*\leq \epsilon_s^{-0.01}\left\|R\right\|_{\rho_s}\leq \epsilon_s^{0.95}.
\end{equation*}
Hence, we have
\begin{equation*}
\left\|(\ref{060203})\right\|_{\rho_{s+1}}^*\leq \frac{\epsilon_s^{1.90}}{|t|}.
\end{equation*}

Next we will estimate
\begin{equation}\label{060302}
\underset{\textbf j\in\mathbb{Z}^{d}}{\sum}\left(\frac{\partial^{3}F}{\partial q_{\textbf n+t\textbf l}\partial\bar{q}_{\textbf m+t\textbf l}\partial{q}_{\textbf j}} \cdot\frac{\partial R }{\partial \bar{q}_{\textbf j}}
\right)-\lim_{t\rightarrow\infty}\underset{\textbf j\in\mathbb{Z}^{d}}{\sum}\left(\frac{\partial^{3}F}{\partial q_{\textbf n+t\textbf l}\partial\bar{q}_{\textbf m+t\textbf l}\partial{q}_{\textbf j}} \cdot\frac{\partial R }{\partial \bar{q}_{\textbf j}}
\right).
\end{equation}
Firstly one has
\begin{eqnarray}\nonumber
&&\|(\ref{060302})\|^*_{\rho_{s+1}}\nonumber\\
&=&
\left\|\sum_{\textbf j\in\mathbb{Z}^d}\left(\frac{\partial\left(\frac{\partial^{2}F}{\partial q_{\textbf n+t\textbf l}\partial\bar{q}_{\textbf m+t\textbf l}}-F_{\textbf n\textbf m}^{11}\right)}{\partial{q}_{\textbf j}}\right)\cdot\frac{\partial R }{\partial \bar{q}_{\textbf j}}\right\|^*_{\rho_{s+1}}\nonumber\\
\label{060304} &\leq&\underset{\textbf j\in\mathbb{Z}^d}{\sup}\left(\left\|\frac{\partial R }{\partial \bar{q}_{\textbf j}}\cdot e^{\rho_{s+1}\ln^{\sigma}\lfloor \textbf j\rfloor}\right\|^*_{\rho_{s+1}}\right)\\
\label{060303}&&\times\left( \sum_{\textbf j\in\mathbb{Z}^{d}}e^{-\rho_{s+1}\ln^{\sigma}\lfloor \textbf j\rfloor}\cdot \left\|\frac{\partial\left(\frac{\partial^{2}F}{\partial q_{\textbf n+t\textbf l}\partial\bar{q}_{\textbf m+t\textbf l}}-F_{\textbf n\textbf m}^{11}\right)}{\partial{q}_{ \textbf j}}\right\|^*_{\rho_{s+1}}\right) ,
\end{eqnarray}
where the last inequality is based on (\ref{002}).\\

On one hand, for any $\textbf j\in\mathbb{Z}^d$ one gets
\begin{eqnarray}
&&\nonumber \left\|\frac{\partial R }{\partial \bar{q}_{\textbf j}}\cdot e^{\rho_{s+1}\ln^{\sigma}\lfloor \textbf j\rfloor}\right\|^*_{\rho_{s+1}}\\
\nonumber  &=&\underset{\left\|q\right\|_{\sigma,\rho_{s+1}}\leq 1}{\sup}\left|{\frac{\partial R}{\partial{\bar q}_{\textbf j}}\cdot e^{\rho_{s+1}\ln^{\sigma}\lfloor \textbf j\rfloor}}\right|\\
\nonumber &\leq& \sup_{\left\|q\right\|_{\sigma,\rho_{s+1}}\leq 1}
\left\|X_R\right\|_{\sigma,\rho_{s+1}}\\
\nonumber&\leq& \exp\left\{10d\left(\frac{10000d}{\delta_{s}^2}\right)^d\cdot \exp\left\{d{\left(\frac{20d}{\delta_{s}} \right)}^{\frac{1}{\sigma-1}}\right\}\right\}\left\|R\right\|_{\rho_s+2\delta_s}\\
\nonumber&&(\mbox{in view of (\ref{050907.})})\\
\nonumber &\leq& \epsilon^{0.95}_{s},
\end{eqnarray}
where the last inequality is based on (\ref{N7}), (\ref{200}) and (\ref{053160}). Therefore one has
\begin{equation}
(\ref{060304})\leq \epsilon^{0.95}_{s}.
\end{equation}

On the other hand,
denote
\[
\frac{\partial^{2}F}{\partial q_{\textbf n+t\textbf l}\partial\bar{q}_{\textbf m+t\textbf l}}-F_{\textbf n\textbf m}^{11}= \sum_{a,k,k'\in\mathbb{N}^{\mathbb{Z}^d}}B_{akk'}\prod_{{\textbf w}\in\mathbb{Z}^d}I_{\textbf w}(0)^{a_{\textbf w}}q_{\textbf w}^{k_{\textbf w}}\bar q_{\textbf w}^{k_{\textbf w}'}.
\]
It follows easily that
\[\frac{\partial\left(\frac{\partial^{2}F}{\partial q_{\textbf n+t\textbf l}\partial\bar{q}_{\textbf m+t\textbf l}}-F_{\textbf n\textbf m}^{11}\right)}{\partial{q}_{\textbf j}}=\sum_{a,k,k'\in\mathbb{N}^{\mathbb{Z}^d}}k_{\textbf j}B_{akk'}\prod_{{\textbf w}\in\mathbb{Z}^d}I_{\textbf w}(0)^{a_{\textbf w}}q_{\textbf w}^{k_{\textbf w}-e_{\textbf j}}\bar q_{\textbf w}^{k_{\textbf w}'}.
\]
Here we assume that $ |k_{\textbf j}|\geq 1$. Otherwise,
\[
\frac{\partial\left(\frac{\partial^{2}F}{\partial q_{\textbf n+t\textbf l}\partial\bar{q}_{\textbf m+t\textbf l}}-F_{\textbf n\textbf m}^{11}\right)}{\partial{q}_{\textbf j}}=0.
\]
Then one has
\begin{eqnarray}\nonumber
&&e^{-(\rho_{s}+\frac52\delta_s)\ln^{\sigma}\lfloor \textbf j\rfloor}\left\|\frac{\partial\left(\frac{\partial^{2}F}{\partial q_{\textbf n+t\textbf l}\partial\bar{q}_{\textbf m+t\textbf l}}-F_{\textbf n\textbf m}^{11}\right)}{\partial{q}_{ \textbf j}}\right\|^*_{\rho_{s+1}}\\
\nonumber&=& \sum_{a,k,k'\in\mathbb{N}^{\mathbb{Z}^d}}\left|k_{\textbf j}B_{akk'}\right|\exp\left\{-\underset{\textbf w\in\mathbb{Z}^{d} }{\sum}\left(2ra_{\textbf w}+\rho_{s+1}\left(k_{\textbf w}+k_{\textbf w}'\right)\right)\ln^{\sigma}\lfloor \textbf n\rfloor\right\}\cdot e^{\frac12\delta_{s}\ln^{\sigma}\lfloor \textbf j\rfloor}\\
&\leq &\sum_{a,k,k'\in\mathbb{N}^{\mathbb{Z}^d}}\left|k_{\textbf j}B_{akk'}\right|\exp\left\{-\underset{\textbf w\in\mathbb{Z}^{d} }{\sum}\left(2ra_{\textbf w}+\left(\rho_{s}+\frac52\delta_s\right)\left(k_{\textbf w}+k_{\textbf w}'\right)\right)\ln^{\sigma}\lfloor \textbf n\rfloor\right\}\label{060430}\nonumber
\\
\nonumber&\leq&\sup_{a,k,k'\in\mathbb{N}^{\mathbb{Z}^d}}\left(\left( \sum_{\textbf w\in\mathbb{Z}^d}\left(k_{\textbf w}+k_{\textbf w}'\right)\ln^{\sigma}\lfloor \textbf w\rfloor\right)
\exp\left\{-\frac12\delta_{s}\underset{\textbf w\in\mathbb{Z}^{d}}{\sum}(k_{\textbf w}+k_{\textbf w}')\ln^{\sigma}\lfloor \textbf w\rfloor\right\}\right)\\&&
\times\left(\sum_{a,k,k'\in\mathbb{N}^{\mathbb{Z}^d}}\left|B_{akk'}\right|\exp\left\{-\underset{\textbf w\in\mathbb{Z}^{d} }{\sum}\left(2ra_{\textbf w}+\left(\rho_{s}+2\delta_s\right)\left(k_{\textbf w}+k_{\textbf w}'\right)\right)\ln^{\sigma}\lfloor \textbf w\rfloor\right\}\right)\nonumber\\
\nonumber\\
\nonumber&\leq& \left(\frac 2{e\delta_s}\right)\cdot\left(\left\|\frac{\partial^{2}F}{\partial q_{\textbf n+t\textbf l}\partial\bar{q}_{\textbf m+t\textbf l}}-F_{\textbf n\textbf m}^{11}\right\|^*_{\rho_{s}+2\delta_{s}}\right)
\end{eqnarray}
where the last inequality uses (\ref{042805*}) and (\ref{d2}) in Definition \ref{083103}.

Thus we have
\begin{eqnarray}
\nonumber (\ref{060303}) &\leq& \left(\frac 2{e\delta_s}\right)\cdot\left(\left\|\frac{\partial^{2}F}{\partial q_{\textbf n+t\textbf l}\partial\bar{q}_{\textbf m+t\textbf l}}-F_{\textbf n\textbf m}^{11}\right\|^*_{\rho_{s}+2\delta_{s}}\right)\sum_{\textbf j\in\mathbb{Z}^{d}}e^{-\frac12\delta_s\ln^{\sigma}\lfloor \textbf j\rfloor} \\
\nonumber
\nonumber &\leq&\left(\frac{2}{e\delta_{s}}\right)
\cdot\frac{ \epsilon^{0.95}_{s}}{|t|}\cdot\left( \left(\frac{12d}{\delta_s}\right)^d\cdot \exp\left\{d{\left(\frac{4d}{\delta_s} \right)}^{\frac{1}{\sigma-1}}\right\}\right)\\
 \nonumber &&(\mbox{in view of (\ref{060206}) and (\ref{0418011})})\\
\nonumber &\leq& \frac{ \epsilon^{0.93}_{s}}{|t|},
\end{eqnarray}
where the last inequality is based on (\ref{052102}).

Therefore, we have
\[
\|(\ref{060302})\|^*_{\rho_{s+1}}\leq \frac{\epsilon^{1.88}_{s}}{|t|}\leq \frac{\epsilon_{s+1}}{|t|}.
\]

\textbf{Step 7. The frequency shift.}

In view of (\ref{051402}), the new normal form $N_{s+1}$ is given by
\begin{equation}
N_{s+1}=N_s+[R_{0,s}]+[R_{1,s}].
\end{equation}Note that $[R_{0,s}]$ (by (\ref{051501.})) is a constant which does not affect the Hamiltonian vector field. Moreover, in view of (\ref{051502}), we denote by
\begin{equation}
\omega_{\textbf j,s}=\left\|\textbf j\right\|^2+\widetilde V_{\textbf j,s}+\sum_{a\in\mathbb{N}^{\mathbb{Z}^d}}B_{a00}^{(\textbf j)}\mathcal{M}_{a00},
\end{equation}
where the term $$\sum_{a\in\mathbb{N}^{\mathbb{Z}^d}}B_{a00}^{(\textbf j)}\mathcal{M}_{a00}$$ is the so-called frequency shift which will be estimated below.
For any $\textbf j\in\mathbb{Z}^d$, one has
\begin{eqnarray}
\nonumber\left|B^{(\textbf j)}_{a00}\right|
\nonumber&\leq& \left\|R_{1,s+1}\right\|_{\rho_{s+1}}^+e^{2\rho_{s+1}\left(\sum_{\textbf n\in\mathbb{Z}^d}a_{\textbf n}\ln^{\sigma}\lfloor \textbf n\rfloor+\ln^{\sigma}\lfloor \textbf j\rfloor-\ln^{\sigma}\lfloor \textbf n_1^{*}(a,0,0;\textbf j)\rfloor\right)}\\
\label{M12}&<&\epsilon_{s+1}^{0.6}\cdot e^{2\rho_{s+1}\left(\sum_{\textbf n\in\mathbb{Z}^d}a_{\textbf n}\ln^{\sigma}\lfloor \textbf n\rfloor+\ln^{\sigma}\lfloor \textbf j\rfloor-\ln^{\sigma}\lfloor \textbf n_1^{*}(a,0,0;\textbf j)\rfloor\right)}.
\end{eqnarray}
In view of (\ref{M13}) and (\ref{M12}) ,
we obtain
\begin{eqnarray}
&&\nonumber\left|\sum_{a\in\mathbb{N}^{\mathbb{Z}^d}}B^{(\textbf j)}_{a00}\mathcal{M}_{a00}\right|\\
\nonumber&\leq & \epsilon_{s+1}^{0.6}\sum_{a\in\mathbb{N}^{\mathbb{Z}^d}}e^{2\rho_{s+1}\left(\sum_{\textbf n\in\mathbb{Z}^d}a_{\textbf n}\ln^{\sigma}\lfloor \textbf n\rfloor+\ln^{\sigma}\lfloor \textbf j\rfloor-\ln^{\sigma}\lfloor \textbf n_1^{*}(a,0,0;\textbf j)\rfloor\right)}\cdot e^{-r\sum_{\textbf n\in\mathbb{Z}^d}2 a_{\textbf n}\ln^{\sigma}\lfloor \textbf n\rfloor}\\
\nonumber&\leq& \epsilon_{s+1}^{0.6}\sum_{a\in\mathbb{N}^{\mathbb{Z}^d}}e^{-r\sum_{\textbf n\in\mathbb{Z}^d}a_{\textbf n}\ln^{\sigma}\lfloor \textbf n\rfloor}\qquad\mbox{(in view of $\lfloor \textbf j\rfloor\leq \lfloor \textbf n_1^*(a,0,0;\textbf j)\rfloor$ and $2\rho_{s+1}<r$)}\\
\nonumber&\leq& \epsilon_{s+1}^{0.6}\prod_{\textbf n\in\mathbb{Z}^d}\left(1-e^{-r \ln^{\sigma}\lfloor \textbf n\rfloor}\right)^{-1} \qquad\qquad\qquad \mbox{(by Lemma \ref{a3})}\\
\label{M15}&\leq&\epsilon_{s+1}^{0.6}\exp\left\{\left(\frac{100d}{r^2}\right)^d\cdot \exp\left\{d\cdot\left(\frac{2d}{r}\right)^{\frac1{\sigma-1}}\right\}\right\}\qquad \mbox{(by Lemma \ref{a5})}\nonumber\\
&\leq &\epsilon_{s+1}^{0.55}.
\end{eqnarray}

Recall that
\begin{equation*}
\left[R_{1}\right]=\sum_{\textbf j\in\mathbb{Z}^d}J^{(\textbf j)}\sum_{a\in\mathbb{N}^{\mathbb{Z}^d}}B^{(\textbf j)}_{a00}\mathcal{M}_{a00},
\end{equation*}
and then
\begin{equation*}
\frac{\partial^2\left[R_{1}\right]}{\partial q_{\textbf n}\partial\bar q_{\textbf n}}=\sum_{a\in\mathbb{N}^{\mathbb{Z}^d}}B^{(\textbf n)}_{a00}\mathcal{M}_{a00}.
\end{equation*}
Note that
\begin{equation*}
\frac{\partial^2\left[R_{1}\right]}{\partial q_{\textbf n}\partial\bar q_{\textbf n}}=\left.\frac{\partial^2R_{s+1}}{\partial q_{\textbf n}\partial\bar q_{\textbf n}}\right|_{q=\bar q=0}
\end{equation*}
and then one has
\begin{equation*}
\lim_{\left\|\textbf n\right\|\rightarrow\infty}\sum_{a\in\mathbb{N}^{\mathbb{Z}^d}}B^{(\textbf n)}_{a00}\mathcal{M}_{a00}\ \mbox{exists}
\end{equation*}
and
\begin{equation}\label{061103}
\left\|\sum_{a\in\mathbb{N}^{\mathbb{Z}^d}}B^{(\textbf n)}_{a00}\mathcal{M}_{a00}-\lim_{\left\|\textbf n\right\|\rightarrow\infty}\sum_{a\in\mathbb{N}^{\mathbb{Z}^d}}B^{(\textbf n)}_{a00}\mathcal{M}_{a00}\right\|_{\rho_{s+1}}\leq \frac{\epsilon_{s+1}^{0.5}}{\left\|\textbf n\right\|}.
\end{equation}
Hence let
\begin{equation*}
\breve V_{s+1}:=\breve V_{s}+\lim_{\left\|\textbf n\right\|\rightarrow\infty}\sum_{a\in\mathbb{N}^{\mathbb{Z}^d}}B^{(\textbf n)}_{a00}\mathcal{M}_{a00},
\end{equation*}
which does not depend on $\textbf{n}$
and satisfies (\ref{061102}) for $s+1$.

Let
\begin{equation*}
\widehat V_{\textbf j,s+1}=\widehat V_{\textbf j,s}+\left(\sum_{a\in\mathbb{N}^{\mathbb{Z}^d}}B^{(\textbf j)}_{a00}\mathcal{M}_{a00}-\lim_{\left\|\textbf j\right\|\rightarrow\infty}\sum_{a\in\mathbb{N}^{\mathbb{Z}^d}}B^{(\textbf j)}_{a00}\mathcal{M}_{a00}\right),
\end{equation*}
and (\ref{053190}) with $s+1$ satisfies by (\ref{061103}).

Next,
if $V\in \mathcal{C}_{\frac{\eta_s}{2}}\left(V_s^*\right)$, by using Cauchy's estimate implies
\begin{eqnarray}
\nonumber\sum_{\textbf n\in\mathbb{Z}^d}\left|\frac{\partial \widehat{V}_{\textbf j,s}}{\partial V_{\textbf n}}(V)\right|
\nonumber&\leq& \frac{2}{\eta_s}\left\|\widehat{V}_s\right\|_\infty\\
\label{M11}&<&10 \eta_s^{-1}\ \ \mbox{(by (\ref{053190}))},
\end{eqnarray}
and let $X\in \mathcal{C}_{\frac{1}{10}\lambda_s\eta_s}\left(V_s^*\right)$, then
\begin{eqnarray*}
\left\|\widehat{V}_s(X)-\omega\right\|_{\infty}
&=&\left\|\widehat{V}_s(X)-\widehat{V}_s\left(V_s^*\right)\right\|_{\infty}\\
& \leq&\sup_{\mathcal{C}_{\frac{1}{10}\lambda_s\eta_s}\left(V_s^*\right)}\left\|\frac{\partial \widetilde{V}_s}{\partial V}\right\|_{l^{\infty}\rightarrow l^{\infty}}\cdot\left\|X-V_s^*\right\|_{\infty}\\
&<&10 \eta_s^{-1}\cdot\frac{1}{10}\lambda_s\eta_s\qquad \qquad  \ \mbox{(in view of (\ref{M11}))}\\
&=&\lambda_s,
\end{eqnarray*}
that is
\begin{equation*}
\widehat{V}_s\left(\mathcal{C}_{\frac{1}{10}\lambda_s\eta_s}\left(V_s^*\right)\right)\subseteq \mathcal{C}_{\lambda_s}\left(\omega\right).
\end{equation*}
By (\ref{M15}), we have
\begin{eqnarray*}
\left|\widehat{V}_{\textbf j,s+1}-\widehat{V}_{\textbf j,s}\right|
<\epsilon_{s+1}^{0.6}\cdot\exp\left\{{18}\cdot \exp\left\{4^{\frac1{\sigma-1}}\right\}\right\}
\label{M16}<\epsilon_{s+1}^{0.59},
\end{eqnarray*}
which verifies (\ref{206}). Further applying Cauchy's estimate on $\mathcal{C}_{\lambda_s\eta_s}\left(V_s^*\right)$, one gets
\begin{eqnarray}\label{633}
\sum_{\textbf n\in\mathbb{Z}^d}\left|\frac{\partial \widehat{V}_{\textbf j,s+1}}{\partial V_{\textbf n}}-\frac{\partial \widehat{V}_{\textbf j,s}}{\partial V_{\textbf n}}\right|
\leq\frac{\left\|\widehat{V}_{s+1}-\widehat{V}_{s}\right\|_\infty}{\lambda_s\eta_s}
\label{M17}\leq\frac{\epsilon_{s+1}^{0.59}}{\lambda_s\eta_s}.
\end{eqnarray}
Since
\begin{equation*}
\eta_{s+1}=\frac{1}{20}\lambda_s\eta_s,
\end{equation*}
hence one has
\begin{eqnarray}
\nonumber\lambda_s\eta_{s}&=&20\prod_{i=0}^{s}\left(\frac1{20}\lambda_i\right)\\
\nonumber&=&20\prod_{i=0}^{s}\left(\frac1{20}\epsilon_i^{0.01}\right)\\
\nonumber&\geq&20\prod_{i=0}^{s}\epsilon_i^{0.02}\\
\label{M19}&\geq&20\epsilon_{s+1}^{0.04}.
\end{eqnarray}
On $ \mathcal{C}_{\frac{1}{10}\lambda_s\eta_s}\left(V_s^*\right)$ and for any $\textbf j\in\mathbb{Z}^d$, we deduce from (\ref{M17}), (\ref{M19}) and the assumption (\ref{199}) that
\begin{eqnarray*}
\sum_{\textbf n\in\mathbb{Z}^d}\left|\frac{\partial \widehat{V}_{\textbf j,s+1}}{\partial V_{\textbf n}}-\delta_{\textbf j\textbf n}\right|
&\leq&\sum_{\textbf n\in\mathbb{Z}^d}\left|\frac{\partial \widehat{V}_{\textbf j,s+1}}{\partial V_{\textbf n}}-\frac{\partial \widehat{V}_{\textbf j,s}}{\partial V_{\textbf n}}\right|+\sum_{\textbf n\in\mathbb{Z}^d}\left|\frac{\partial \widehat{V}_{\textbf j,s}}{\partial V_{\textbf n}}-\delta_{\textbf j\textbf n}\right|\\
&\leq&\epsilon_{s+1}^{0.5}+d_s\epsilon_{0}^{\frac{1}{10}}\\
&<&d_{s+1}\epsilon_{0}^{\frac{1}{10}},
\end{eqnarray*}
and consequently
\begin{equation}\label{M20}
\left|\left|\frac{\partial \widehat{V}_{s+1}}{{\partial V}}-Id\right|\right|_{l^{\infty}\rightarrow l^{\infty}}<d_{s+1}\epsilon_{0}^{\frac{1}{10}},
\end{equation}
which verifies (\ref{199}) for $s+1$.

Finally, we will freeze $\omega$ by invoking an inverse function theorem. Consider the following functional equation
\begin{equation}\label{M21}
\widehat{V}_{s+1}\left(X\right)=\omega, \qquad  X\in \mathcal{C}_{\frac{1}{10}\lambda_s\eta_s}\left(V_s^*\right),
\end{equation}
from (\ref{M20}) and the standard inverse function theorem implies (\ref{M21}) having a solution $V_{s+1}^*$, which verifies (\ref{198}) for $s+1$. Rewriting (\ref{M21}) as
\begin{equation}\label{M22}
V_{s+1}^*-V_s^*=\left(I-\widehat{V}_{s+1}\right)\left(V_{s+1}^*\right)-\left(I-\widehat{V}_{s+1}\right)\left({V_s}^*\right)+\left(\widehat{V}_s-\widehat{V}_{s+1}\right)\left(V_s^*\right),
\end{equation}
and by using (\ref{M16}) and (\ref{M20}) one has
\begin{equation*}\label{M23}
\left\|V_{s+1}^*-V_s^*\right\|_{\infty}\leq \left(1+d_{s+1}\right)\epsilon_{0}^{\frac{1}{10}}\left\|V_{s+1}^*-V_s^*\right\|_{\infty}+\epsilon_{s+1}^{0.59}<\epsilon_{s+1}^{0.58}\leq  \lambda_s\eta_s,
\end{equation*}where the last inequality is based on (\ref{M19}),
which verifies (\ref{205}) and completes the proof of the iterative lemma.
\end{proof}

\subsection{Convergence}

We are now in a position to prove the convergence. To apply iterative lemma with $s=0$, set
\begin{equation}\label{031940}
V_0^*=\omega,\hspace{12pt}\widehat{V}_0=id,\hspace{12pt}\epsilon_0=C\epsilon,
\end{equation}
and consequently (\ref{198})-(\ref{202}) with $s=0$ are satisfied. Hence applying the iterative lemma, we obtain a decreasing
sequence of domains $D_{s}\times\mathcal{C}_{\eta_{s}}\left(V_{s}^*\right)$ and a sequence of
transformations
\begin{equation*}
\Phi^s=\Phi_1\circ\cdots\circ\Phi_s:\hspace{6pt}D_{s}\times\mathcal{C}_{\eta_{s}}\left(V_{s}^*\right)\rightarrow D_{0}\times\mathcal{C}_{\eta_{0}}(V_{0}),
\end{equation*}
such that $H\circ\Phi^s=N_s+R_s$ for $s\geq1$. Moreover, the
estimates (\ref{203})-(\ref{205}) hold. Thus we can show $V_s^*$ converge to a limit $V^*$ with the estimate
\begin{equation*}
||V^*-\omega||_{\infty}\leq\sum_{s=0}^{\infty}2\epsilon_{s}^{0.5}<\epsilon_{0}^{0.4},
\end{equation*}
and $\Phi^s$ converge uniformly on $D_*\times\{V_*\}$, where $$D_*=\left\{(q_{\textbf n})_{\textbf n\in\mathbb{Z}^d}:\frac{2}{3}\leq|q_{\textbf n}|e^{r\ln^{\sigma}\lfloor \textbf n\rfloor}\leq\frac{5}{6}\right\},$$ to $\Phi:D_*\times\{V^*\}\rightarrow D_0$ with the estimates
\begin{eqnarray}
\nonumber&&||\Phi-id||_{\sigma,r}\leq \epsilon_{0}^{0.4},\\
\nonumber&&||D\Phi-Id||_{(\sigma,r)\rightarrow(\sigma,r)}\leq \epsilon_{0}^{0.4}.
\end{eqnarray}
Hence
\begin{equation}\label{060101}
H_*=H\circ\Phi=N_*+R_{2,*},
\end{equation}
where
\begin{equation}
N_*=\sum_{\textbf n\in\mathbb{Z}^d}\left(\left\|\textbf n\right\|^2+\xi+\omega_{\textbf n}\right)|q_{\textbf n}|^2,
\end{equation}$$\xi=\sum_{s\geq 0}\breve V_s,$$
and
\begin{equation}\label{062811}
||R_{2,*}||_{0.2}^{+}\leq\frac{7}{6}\epsilon_0.
\end{equation}
By (\ref{050907}), the Hamiltonian vector field $X_{R_{2,*}}$ is a bounded map from $\mathfrak{H}_{\sigma,r}$ into $\mathfrak{H}_{\sigma,r}$, and
we get an invariant torus $\mathcal{T}$ with frequency $\left(\left\|\textbf n\right\|^2+\xi+\omega_{\textbf n}\right)_{\textbf n\in\mathbb{Z}^d}$ for ${X}_{H_*}$.

\subsection{Proof of Theorem \ref{031930}}
\begin{proof}
Expand $u$ into Fourier series by
\begin{equation*}
u=\sum_{\textbf n\in\mathbb{Z}^d}q_{\textbf n}\phi_{\textbf n }(x),
\end{equation*}
where \begin{equation*}
\phi_{\textbf n}(x)=\frac{1}{(2\pi)^{d/2}}e^{\sqrt{-1}\sum_{i=1}^dn_ix_i}.
\end{equation*}
Then the  Hamiltonian of equation (\ref{061510}) is given by
\begin{equation}\label{H.}
	H(q,\bar q)=N(q,\bar q)+ R(q,\bar q),
\end{equation}
where
\begin{equation*}
N(q,\bar q)=\sum_{\textbf n\in\mathbb{Z}^d}\left(\left\|\textbf n\right\|^2+V_{\textbf n}\right)\left|q_{\textbf n}\right|^2
\end{equation*}
and
\begin{equation*}
R(q,\bar q)=\epsilon\sum_{a,k,k'\in\mathbb{N}^{\mathbb{Z}^d}\atop |a|=0,|k|+|k'|=4}B_{akk'}\mathcal{M}_{akk'}.
\end{equation*}
Here \begin{equation*}
B_{akk'}=\frac{1}{(2\pi)^d},
\end{equation*}if mass conservation (\ref{051702}) and momentum conservation (\ref{050901}) satisfy; otherwise
\begin{equation*}
B_{akk'}=0.
\end{equation*}
Then one has
\begin{equation*}
\left\|R\right\|_{\rho_0}\leq  \frac{\epsilon}{(2\pi)^d}:=\epsilon_0.
\end{equation*}

Then the assumptions (\ref{031940}) in iterative lemma for $s=0$ hold.  Applying iterative lemma,  $\Phi(\mathcal{T})$ is the desired invariant torus for the Hamiltonian (\ref{H.}). Moreover, we deduce the torus $\Phi(\mathcal{T})$ is linearly stable from the fact that (\ref{060101}) is a normal form of order 2 around the invariant torus.
\end{proof}
\section{Appendix}
\subsection{Technical Lemma}
\begin{lem}\label{122203}
Given any $\sigma>2$, there exists a constant $c(\sigma)>e^3$ depending on $\sigma$ only such that
\begin{equation}\label{122201}
\ln^{\sigma}(x+y)-\ln^{\sigma}x-\frac12\ln^{\sigma}y\leq 0, \quad \mbox{for}\ c(\sigma)\leq y\leq x.
\end{equation}
\end{lem}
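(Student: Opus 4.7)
\smallskip
\noindent\textbf{Proof plan for Lemma~\ref{122203}.} The plan is to control the increment $\ln^\sigma(x+y)-\ln^\sigma x$ by the mean value theorem applied to the function $\phi(t)=\ln^\sigma t$, and then use the assumption $y\le x$ together with a monotonicity argument to replace a factor of $\ln^{\sigma-1}x$ by $\ln^{\sigma-1}y$, losing only a constant that depends on $\sigma$. The final inequality then reduces to an elementary comparison of $\ln^{\sigma-1}y$ with $\tfrac12\ln^\sigma y$, which holds once $\ln y$ exceeds a multiple of $\sigma$.

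\smallskip
\noindent\textbf{Step 1: concavity of $\phi$ in the tail.} Compute
\begin{equation*}
\phi'(t)=\frac{\sigma\,\ln^{\sigma-1}t}{t},\qquad \phi''(t)=\frac{\sigma\,\ln^{\sigma-2}t\,[(\sigma-1)-\ln t]}{t^2}.
\end{equation*}
Hence $\phi''(t)<0$ whenever $\ln t>\sigma-1$. So $\phi'$ is decreasing on $[e^{\sigma-1},\infty)$.

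\smallskip
\noindent\textbf{Step 2: a one-sided MVT bound.} Assuming $x\ge e^{\sigma-1}$, the mean value theorem gives $\xi\in(x,x+y)$ with
\begin{equation*}
\ln^\sigma(x+y)-\ln^\sigma x=\phi'(\xi)\,y\le \phi'(x)\,y=\sigma\,\frac{\ln^{\sigma-1}x}{x}\,y,
\end{equation*}
using that $\xi\ge x\ge e^{\sigma-1}$ and $\phi'$ is decreasing there.

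\smallskip
\noindent\textbf{Step 3: exchange $x$ for $y$ via monotonicity.} Set $g(t)=t/\ln^{\sigma-1}t$. A direct computation gives
\begin{equation*}
g'(t)=\frac{\ln t-(\sigma-1)}{\ln^\sigma t},
\end{equation*}
so $g$ is increasing on $[e^{\sigma-1},\infty)$. Since $y\le x$ and both $\ge e^{\sigma-1}$, one gets $g(y)\le g(x)$, which rearranges to
\begin{equation*}
\frac{\ln^{\sigma-1}x}{x}\,y\le \ln^{\sigma-1}y.
\end{equation*}
Combined with Step 2, this yields
\begin{equation*}
\ln^\sigma(x+y)-\ln^\sigma x\le \sigma\,\ln^{\sigma-1}y.
\end{equation*}

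\smallskip
\noindent\textbf{Step 4: absorb into $\tfrac12\ln^\sigma y$.} The inequality $\sigma\,\ln^{\sigma-1}y\le \tfrac12\ln^\sigma y$ is equivalent to $\ln y\ge 2\sigma$, i.e.\ $y\ge e^{2\sigma}$. Taking
\begin{equation*}
c(\sigma):=\max\{e^3,\,e^{\sigma-1},\,e^{2\sigma}\}=e^{2\sigma}\quad(\sigma>2),
\end{equation*}
we obtain (\ref{122201}) for all $c(\sigma)\le y\le x$.

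\smallskip
\noindent\textbf{Expected obstacle.} The argument itself is elementary; the only delicate point is to make the tail behavior of $\phi'$ do the heavy lifting. The key quantitative step is the replacement in Step 3, which relies on the specific form of $g$ and fails near $t=e^{\sigma-1}$; this is precisely why one must enforce $y\ge e^{\sigma-1}$ in the hypothesis. Once this is set up, Steps 2 and 4 are straightforward.
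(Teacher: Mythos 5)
Your proof is correct, and it takes a genuinely different route from the paper's. The paper substitutes $x=ty$, factors out $\ln^{\sigma}y$, applies the mean value theorem to reduce to bounding
\begin{equation*}
\frac{\sigma}{\ln y}\left(1+\frac{\ln(1+t)}{\ln y}\right)^{\sigma-1}\ln\!\left(\frac{1+t}{t}\right),
\end{equation*}
and then splits into two cases according to whether $\ln(1+t)/\ln y$ is $\le 1$ or $>1$, producing constants $c_1(\sigma)=\exp\{\sigma 2^{\sigma}\ln 2\}$ and $c_2(\sigma)=\exp\{(\sigma 2^{\sigma}c^{*}(\sigma))^{1/\sigma}\}$. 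You instead apply the MVT to $\phi(t)=\ln^{\sigma}t$ directly, use the concavity of $\phi$ on $[e^{\sigma-1},\infty)$ to replace the intermediate point by the left endpoint, and then invoke the monotonicity of $g(t)=t/\ln^{\sigma-1}t$ on $[e^{\sigma-1},\infty)$ to exchange the factor $\ln^{\sigma-1}x/x$ for $\ln^{\sigma-1}y/y$. This eliminates the case split entirely and yields the admissible constant $c(\sigma)=e^{2\sigma}$, which is considerably smaller than the paper's $c_1(\sigma)=\exp\{\sigma 2^{\sigma}\ln 2\}$ (doubly exponential in $\sigma$) and also slightly better than the $2^{10}$ the paper ultimately fixes for $\sigma$ near $2$. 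The trade-off is essentially nil: both approaches use the MVT, but your Step~3 does all the heavy lifting cleanly with a single monotonicity observation, whereas the paper handles the two regimes of $t$ relative to $y$ by hand. All the computational steps in your argument check out, the constant satisfies $c(\sigma)=e^{2\sigma}>e^{4}>e^{3}$, and the hypotheses $x,y\ge e^{\sigma-1}$ needed for Steps~2 and~3 are guaranteed by $y\ge c(\sigma)>e^{\sigma-1}$ and $x\ge y$.
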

\begin{proof}
Write $x=ty$ with $t\geq 1$, and then
\begin{eqnarray*}
&&\ln^{\sigma}(x+y)-\ln^{\sigma}x-\frac12\ln^{\sigma}y\\
&=&\ln^{\sigma}(ty+y)-\ln^{\sigma}(ty)-\frac12\ln^{\sigma}y\\
&=&\ln^{\sigma}y\left(\left(1+\frac{\ln(1+t)}{\ln y}\right)^{\sigma}-\left(1+\frac{\ln t}{\ln y}\right)^{\sigma}-\frac12\right).
\end{eqnarray*}
To prove (\ref{122201}), it suffices to show that
\begin{equation}\label{122202}
\sup_{t\geq 1,y\geq c(\sigma)}\left(\left(1+\frac{\ln(1+t)}{\ln y}\right)^{\sigma}-\left(1+\frac{\ln t}{\ln y}\right)^{\sigma}\right)\leq \frac12.
\end{equation}
Using differential mean value theorem, one has
\begin{eqnarray*}
\left(1+\frac{\ln(1+t)}{\ln y}\right)^{\sigma}-\left(1+\frac{\ln t}{\ln y}\right)^{\sigma}
\leq \frac{\sigma}{\ln y}\left(1+\frac{\ln(1+t)}{\ln y}\right)^{\sigma-1}\ln \left(\frac{1+t}{t}\right).
\end{eqnarray*}

\textbf{Case 1.} $$\frac{\ln(1+t)}{\ln y}\leq 1.$$

Then one has
\begin{equation*}
\frac{\sigma}{\ln y}\left(1+\frac{\ln(1+t)}{\ln y}\right)^{\sigma-1}\ln \left(\frac{1+t}{t}\right)\leq \frac{\sigma\cdot 2^{\sigma-1}}{\ln y}\cdot \ln 2,
\end{equation*}
where using
\begin{equation*}
\ln \left(\frac{1+t}{t}\right)\leq \ln 2.
\end{equation*}
Taking
\begin{equation}\label{51501}
c_1(\sigma)=\exp\left\{\sigma\cdot 2^{\sigma}\cdot\ln 2 \right\},
\end{equation}
we finish the proof of (\ref{122202}) in view of $y\geq c_1(\sigma)$.

\textbf{Case 2.} $$\frac{\ln(1+t)}{\ln y}>1.$$

Then one has
\begin{eqnarray*}
&&\frac{\sigma}{\ln y}\left(1+\frac{\ln(1+t)}{\ln y}\right)^{\sigma-1}\ln \left(\frac{1+t}{t}\right)\\
&\leq& \frac{\sigma\cdot 2^{\sigma-1}}{\ln^{\sigma}y}\cdot \ln^{\sigma-1}(1+t)\cdot \ln\left(\frac{1+t}{t}\right)\\
&\leq&\frac{\sigma\cdot 2^{\sigma-1}}{\ln^{\sigma}y}\cdot 2\left(\frac{\sigma-1}e\right)^{\sigma-1},
\end{eqnarray*}
where noting that
\begin{equation*}
\max_{t\geq 1}\left(\ln^{\sigma-1}(1+t)\cdot \ln\left(\frac{1+t}{t}\right)\right)\leq 2\left(\frac{\sigma-1}e\right)^{\sigma-1}:=c^*(\sigma).
\end{equation*}
Taking
\begin{equation}\label{51502}
c_2(\sigma)=\exp\left\{{\left(\sigma\cdot 2^{\sigma}\cdot c^*(\sigma)\right)^{\frac1{\sigma}}}\right\},
\end{equation}
we finish the proof of (\ref{122202}) in view of $y\geq c_2(\sigma)$.

Finally letting
\begin{equation*}\label{030402}
c(\sigma)=\max \{c_1(\sigma),c_2(\sigma)\},
\end{equation*}
we finish the proof of (\ref{122201}).
\end{proof}
\begin{rem}
In view of (\ref{51501}) and (\ref{51502}), one has
\begin{equation*}
c_1(\sigma)\rightarrow 2^{8}
\end{equation*}
and
\begin{equation*}
c_2(\sigma)\rightarrow\exp\left\{\frac4{\sqrt e}\right\}
\end{equation*}
as $\sigma\rightarrow 2$.
Hence we can take
\begin{equation*}
c(\sigma):=2^{10}.
\end{equation*}
\end{rem}
\begin{rem}\label{51505}
In view of (\ref{122201}), one has
\begin{equation}\label{51504}
\ln^{\sigma}x+\ln^{\sigma} y-\ln^{\sigma}(x+y)\geq \frac12\ln^{\sigma}y\geq 0.
\end{equation}
\end{rem}
\begin{lem}\label{8.6}For $\sigma>2$ and $\delta\in(0,1)$, let $f_{\sigma,\delta}(x)=e^{-\delta x^{\sigma}+x}$, then we have
\begin{equation}\label{042805}
\max_{x\geq0}f_{\sigma,\delta}(x)\leq \exp\left\{\left(\frac1{\delta}\right)^{\frac1{\sigma-1}}\right\}.
\end{equation}
\end{lem}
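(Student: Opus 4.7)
The statement is a straightforward one-variable calculus maximization, so the plan is to locate the critical point of the exponent $g(x) := -\delta x^{\sigma} + x$ on $[0,\infty)$, evaluate $g$ there, and compare with $\delta^{-1/(\sigma-1)}$.

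First, I would observe that $g(0)=0$ and $g(x)\to-\infty$ as $x\to\infty$ (since $\sigma>2>1$), so the supremum is attained at an interior critical point. Differentiating, $g'(x)=-\delta\sigma x^{\sigma-1}+1=0$ gives the unique critical point
\begin{equation*}
x_{*}=(\delta\sigma)^{-1/(\sigma-1)}.
\end{equation*}
A routine computation then yields
\begin{equation*}
g(x_{*})=-\delta\,x_{*}^{\sigma}+x_{*}=x_{*}\Bigl(1-\delta\,x_{*}^{\sigma-1}\Bigr)=x_{*}\Bigl(1-\tfrac{1}{\sigma}\Bigr)=\frac{\sigma-1}{\sigma}(\delta\sigma)^{-1/(\sigma-1)}.
\end{equation*}

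Hence $\max_{x\ge 0}f_{\sigma,\delta}(x)=\exp\{g(x_{*})\}$, and the claimed inequality reduces to showing
\begin{equation*}
\frac{\sigma-1}{\sigma}\,(\delta\sigma)^{-1/(\sigma-1)}\le \delta^{-1/(\sigma-1)},
\end{equation*}
which after cancelling $\delta^{-1/(\sigma-1)}$ is equivalent to $\sigma-1\le \sigma^{\sigma/(\sigma-1)}$. Since $\sigma>2$, we have $\sigma^{\sigma/(\sigma-1)}\ge \sigma>\sigma-1$, so the inequality holds.

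There is essentially no obstacle here; the only mildly delicate point is bookkeeping the exponent $\sigma/(\sigma-1)$ when evaluating $g(x_{*})$, but this is just algebra. No use of the hypothesis $\delta\in(0,1)$ is actually needed for the bound — it holds for all $\delta>0$ — but the restriction is harmless and matches how the lemma is applied elsewhere in the paper. The whole argument fits in a few lines.
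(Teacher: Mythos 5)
Your proof is correct and follows the same route as the paper: differentiate, locate the unique critical point $x_{*}=(\delta\sigma)^{-1/(\sigma-1)}$, evaluate the exponent there, and compare with $\delta^{-1/(\sigma-1)}$. You simply spell out the final comparison — the paper compresses it to the remark that the inequality "uses $\sigma>2$" — and your observation that only $\sigma>1$ and $\delta>0$ are actually needed is accurate but immaterial to how the lemma is used.
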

\begin{proof}
Since
\begin{equation*}
f_{\sigma,\delta}'(x)=e^{-\delta x^{\sigma}+x}\left(-\delta\sigma x^{\sigma-1}+1\right),
\end{equation*}
we have
\begin{equation*}
f_{\sigma,\delta}'(x)=0\Leftrightarrow x=\left(\frac{1}{\delta\sigma}\right)^{\frac1{\sigma-1}}.
\end{equation*}
Then one has
\begin{equation*}
\max_{x\geq0}f_{\sigma,\delta}(x)=f_{\sigma,\delta}\left(\left(\frac{1}{\delta\sigma}\right)^{\frac1{\sigma-1}}\right)\leq \exp\left\{\left(\frac1{\delta}\right)^{\frac1{\sigma-1}}\right\},
\end{equation*}
where the last inequality uses $\sigma>2$.
\end{proof}
\begin{lem}\label{8.6}For $p\geq 1$ and $\delta\in (0,1)$, let $$g_{p,\delta}(x)=x^{p}e^{-\delta x},$$ then the following inequality holds
\begin{equation}\label{042805*}
\max_{x\geq0}g_{p,\delta}(x)\leq \left(\frac{p}{e\delta}\right)^p.
\end{equation}
\end{lem}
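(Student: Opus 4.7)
The plan is a direct one-variable calculus exercise: locate the unique critical point of $g_{p,\delta}$ on $(0,\infty)$, evaluate $g_{p,\delta}$ there, and check that this critical value dominates the boundary behavior.

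First I would differentiate. Since
\begin{equation*}
g_{p,\delta}'(x) = p x^{p-1} e^{-\delta x} - \delta x^p e^{-\delta x} = x^{p-1} e^{-\delta x}\bigl(p - \delta x\bigr),
\end{equation*}
the only positive root of $g_{p,\delta}'$ is $x_* = p/\delta$. For $p\geq 1$, the factor $x^{p-1}e^{-\delta x}$ is nonnegative on $[0,\infty)$, so the sign of $g_{p,\delta}'(x)$ equals the sign of $p-\delta x$: positive on $(0,x_*)$ and negative on $(x_*,\infty)$. Hence $x_*$ is the unique global maximum of $g_{p,\delta}$ on $[0,\infty)$.

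Next I would plug $x_* = p/\delta$ into $g_{p,\delta}$:
\begin{equation*}
g_{p,\delta}\!\left(\frac{p}{\delta}\right) = \left(\frac{p}{\delta}\right)^{p} e^{-p} = \left(\frac{p}{e\delta}\right)^{p}.
\end{equation*}
Combined with the boundary values $g_{p,\delta}(0)=0$ and $\lim_{x\to\infty} g_{p,\delta}(x)=0$, this yields
\begin{equation*}
\max_{x\geq 0} g_{p,\delta}(x) = \left(\frac{p}{e\delta}\right)^{p},
\end{equation*}
which is the desired bound (in fact an equality). No step here is a real obstacle; the only mild subtlety is that the case $p=1$ makes $x^{p-1}$ constant, but the sign analysis of $g_{p,\delta}'$ still goes through unchanged, so a single uniform argument handles all $p\geq 1$.
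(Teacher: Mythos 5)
Your proof is correct and follows essentially the same route as the paper: differentiate, locate the unique critical point at $x_*=p/\delta$, and evaluate there to obtain $(p/(e\delta))^p$. You add a sign analysis of $g_{p,\delta}'$ and a check of the boundary values, which the paper leaves implicit, but the substance is identical.
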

\begin{proof}
Since
\begin{equation*}
g_{p,\delta}'(x)=px^{p-1}e^{-\delta x}-\delta x^{p}e^{-\delta x},
\end{equation*}
then we have
\begin{equation*}
g_{p,\delta}'(x)=0\Leftrightarrow x=\frac p{\delta}.
\end{equation*}
Then one has
\begin{equation*}\label{042608*}
\max_{x\geq0}g_{p,\delta}(x)=g_{p,\delta}\left(\frac{p}\delta\right)= \left(\frac{p}{e\delta}\right)^p.
\end{equation*}
\end{proof}
\begin{lem}\label{lem2}
For $\sigma>2$ and $\delta\in(0,1)$, we have
\begin{equation}\label{0418011}
\sum_{j\geq 1}e^{-\delta \ln^{\sigma} j}\leq \frac{6}\delta\cdot \exp\left\{\left(\frac1{\delta}\right)^{\frac1{\sigma-1}}\right\}.
\end{equation}
\end{lem}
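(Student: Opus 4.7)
\medskip

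\noindent\textbf{Proof proposal for Lemma \ref{lem2}.} The plan is to pass from the discrete sum to an integral, perform the substitution $u=\ln j$ that reduces the exponent to the function $-\delta u^{\sigma}+u$ already analyzed in Lemma \ref{8.6}, and then split the resulting integral at a carefully chosen cutoff so that one piece is controlled by the pointwise maximum bound (\ref{042805}) and the other by direct exponential decay.

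First I would note that $x\mapsto e^{-\delta\ln^{\sigma}x}$ is monotone decreasing for $x\geq 1$, so by the integral test
\begin{equation*}
\sum_{j\geq 1}e^{-\delta\ln^{\sigma}j}\leq 1+\int_{1}^{\infty}e^{-\delta\ln^{\sigma}x}\,dx = 1+\int_{0}^{\infty}e^{-\delta u^{\sigma}+u}\,du,
\end{equation*}
after the change of variables $u=\ln x$, $dx=e^{u}du$. This puts the problem squarely in the framework of Lemma \ref{8.6}.

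Next I would choose the cutoff $u_0:=(2/\delta)^{1/(\sigma-1)}$, which has the virtue that for $u\geq u_0$ one has $\delta u^{\sigma-1}\geq 2$, hence $-\delta u^{\sigma}+u\leq -\delta u^{\sigma}/2$. Using further that $u^{\sigma}\geq u$ for $u\geq 1$ (noting $u_0\geq 1$ since $\delta<1$), this yields
\begin{equation*}
\int_{u_0}^{\infty}e^{-\delta u^{\sigma}+u}\,du \leq \int_{u_0}^{\infty}e^{-\delta u/2}\,du \leq \frac{2}{\delta}.
\end{equation*}
On the complementary interval $[0,u_0]$ I would invoke the pointwise estimate (\ref{042805}) together with the fact that $\sigma>2$ makes $1/(\sigma-1)<1$, so $u_0=(2/\delta)^{1/(\sigma-1)}\leq 2/\delta$ for $\delta\in(0,1)$. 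This gives
\begin{equation*}
\int_{0}^{u_0}e^{-\delta u^{\sigma}+u}\,du \leq u_0\cdot\exp\!\left\{\left(\tfrac{1}{\delta}\right)^{1/(\sigma-1)}\right\} \leq \frac{2}{\delta}\exp\!\left\{\left(\tfrac{1}{\delta}\right)^{1/(\sigma-1)}\right\}.
\end{equation*}

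Combining the two pieces and absorbing the constant $1$ from the integral test into the prefactor (using $\delta<1$ and that the exponential factor is $\geq 1$), one obtains the claimed bound
\begin{equation*}
\sum_{j\geq 1}e^{-\delta\ln^{\sigma}j}\leq \frac{6}{\delta}\exp\!\left\{\left(\tfrac{1}{\delta}\right)^{1/(\sigma-1)}\right\}.
\end{equation*}
There is no real obstacle here; the only subtlety is the choice of the split point $u_0$, which must be large enough so that the superlinear term $\delta u^{\sigma}$ dominates $u$ on $[u_0,\infty)$, yet small enough that the prefactor $u_0$ coming from the $[0,u_0]$ piece remains of order $1/\delta$. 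The factor $2$ in $u_0=(2/\delta)^{1/(\sigma-1)}$ is the natural choice that balances both constraints and produces the constant $6$ stated in (\ref{0418011}).
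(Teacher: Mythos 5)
Your proof is correct, and while it uses the same basic toolkit as the paper (integral comparison followed by the pointwise bound of Lemma \ref{8.6}), the mechanism is genuinely different and, in fact, slightly sharper. The paper bounds $1+\int_0^\infty e^{-\delta u^\sigma+u}\,du$ by first writing the integrand as $e^{-\frac12\delta u^\sigma}\cdot e^{-\frac12\delta u^\sigma+u}$, pulling out the global supremum of the second factor via (\ref{042805}) applied with $\delta/2$, and then integrating the surviving $e^{-\frac12\delta u^\sigma}$; this inevitably produces the exponent $\left(2/\delta\right)^{1/(\sigma-1)}$, which is actually larger than the $\left(1/\delta\right)^{1/(\sigma-1)}$ claimed in (\ref{0418011}). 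You instead split the integration domain at $u_0=(2/\delta)^{1/(\sigma-1)}$: on $[0,u_0]$ you apply (\ref{042805}) at the original $\delta$ (so the exponent is precisely $(1/\delta)^{1/(\sigma-1)}$) and lose only the length $u_0\leq 2/\delta$, while on $[u_0,\infty)$ the superlinear term $\delta u^\sigma$ dominates $u$ by construction and the tail integrates cleanly to $2/\delta$. The net effect is that your argument obtains the bound exactly as stated, with the exponent $(1/\delta)^{1/(\sigma-1)}$, whereas the paper's own derivation, taken literally, only reaches the marginally weaker version with $(2/\delta)^{1/(\sigma-1)}$. Your choice of cutoff and the observation $u_0\leq 2/\delta$ (using $1/(\sigma-1)<1$) are both correct, and the final bookkeeping collecting $1+\tfrac2\delta+\tfrac2\delta\exp\{(1/\delta)^{1/(\sigma-1)}\}\leq\tfrac6\delta\exp\{(1/\delta)^{1/(\sigma-1)}\}$ is sound.
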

\begin{proof}
Obviously, one has
\begin{eqnarray*}
\label{e}&&\sum_{j\geq 1}e^{-\delta \ln^{\sigma} j}\\
 &\leq& 2+\int_{1}^{+\infty}e^{-\delta \ln^{\sigma} x}\mathrm{d}x\\
&=&2+\int_{0}^{+\infty}e^{-\delta y^{\sigma}+y}\mathrm{d}y\\
&\leq&2\exp\left\{\left(\frac2{\delta}\right)^{\frac1{\sigma-1}}\right\}\cdot\int_{0}^{+\infty}e^{-\frac12\delta y^{\sigma}}\mathrm{d}y\qquad \mbox{(by (\ref{042805}) and $\sigma>2$)}\\
&\leq&2\exp\left\{\left(\frac2{\delta}\right)^{\frac1{\sigma-1}}\right\}\cdot\left(1+\int_{1}^{+\infty}e^{-\frac12\delta y}\mathrm{d}y\right)\\
&\leq &\frac{6}\delta\cdot \exp\left\{\left(\frac2{\delta}\right)^{\frac1{\sigma-1}}\right\}.
\end{eqnarray*}

\end{proof}
\begin{lem}\label{a3}
For $\sigma>2$ and $\delta\in(0,1)$,  we have the following inequality
\begin{equation}\label{041809}
\sum_{a\in\mathbb{N}^{\mathbb{Z}^d}}e^{-\delta\sum_{\textbf n\in\mathbb{Z}^d}a_{\textbf n}\ln^{\sigma}\lfloor \textbf n\rfloor}\leq\prod_{\textbf n\in\mathbb{Z}^d}\frac{1}{1-e^{-\delta \ln^{\sigma}\lfloor \textbf n\rfloor}}.
\end{equation}
\end{lem}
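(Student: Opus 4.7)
The plan is to reduce the estimate to a Fubini/Tonelli style factorization: the sum on the left splits coordinate by coordinate, and each factor is an elementary geometric series. Concretely, writing
\begin{equation*}
e^{-\delta\sum_{\textbf{n}}a_{\textbf{n}}\ln^{\sigma}\lfloor\textbf{n}\rfloor}=\prod_{\textbf{n}\in\mathbb{Z}^d}e^{-\delta a_{\textbf{n}}\ln^{\sigma}\lfloor\textbf{n}\rfloor},
\end{equation*}
the left hand side becomes a sum over multi-indices $a\in\mathbb{N}^{\mathbb{Z}^d}$ of a product of \emph{nonnegative} terms, one per coordinate, which is precisely the setting where Tonelli applies.

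The next step is to truncate to a finite box $B_N=\{\textbf{n}\in\mathbb{Z}^d:\|\textbf{n}\|\le N\}$ and use ordinary distributivity on this finite product, obtaining
\begin{equation*}
\sum_{\mathrm{supp}(a)\subset B_N}\prod_{\textbf{n}\in B_N}e^{-\delta a_{\textbf{n}}\ln^{\sigma}\lfloor\textbf{n}\rfloor}=\prod_{\textbf{n}\in B_N}\sum_{k=0}^{\infty}e^{-\delta k\ln^{\sigma}\lfloor\textbf{n}\rfloor}=\prod_{\textbf{n}\in B_N}\frac{1}{1-e^{-\delta\ln^{\sigma}\lfloor\textbf{n}\rfloor}}.
\end{equation*}
Each single-variable geometric sum converges because $\lfloor\textbf{n}\rfloor\ge 2^{10}$ forces $e^{-\delta\ln^{\sigma}\lfloor\textbf{n}\rfloor}\in(0,1)$. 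Letting $N\to\infty$, the left hand side increases monotonically (all summands are nonnegative) to the full sum $\sum_{a\in\mathbb{N}^{\mathbb{Z}^d}}e^{-\delta\sum_{\textbf{n}}a_{\textbf{n}}\ln^{\sigma}\lfloor\textbf{n}\rfloor}$, while the right hand side converges monotonically to the infinite product in the statement of the lemma. Passing to the limit on both sides yields the desired inequality (in fact equality).

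I do not expect any genuine obstacle here: the identity is pure combinatorial algebra, and the only care needed is the monotone interchange of limits, which is automatic for nonnegative quantities. Note also that the inequality form is all the main argument ever uses, so finiteness of the right hand side is not required; however this finiteness does follow from Lemma \ref{lem2}, since $\sigma>2$ gives $\sum_{\textbf{n}}e^{-\delta\ln^{\sigma}\lfloor\textbf{n}\rfloor}<\infty$, and hence by the elementary bound $-\ln(1-x)\le 2x$ valid for $x\in(0,1/2)$ one obtains $-\sum_{\textbf{n}}\ln(1-e^{-\delta\ln^{\sigma}\lfloor\textbf{n}\rfloor})<\infty$, so that the product is indeed a well-defined finite positive number.
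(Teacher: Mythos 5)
Your argument is the same direct factorization the paper uses (the paper simply writes "by a direct calculation" and bounds the left-hand side by the product of one-coordinate geometric series); you have merely spelled out the monotone truncation to finite boxes $B_N$ that justifies the interchange. Both proofs are correct and essentially identical.
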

\begin{proof}
By a direct calculation, one has
$$\sum\limits_{a\in\mathbb{N}^{\mathbb{Z}^d}}e^{-\delta\sum_{\textbf n\in\mathbb{Z}^d}a_{\textbf n}\ln^{\sigma}\lfloor \textbf n\rfloor}\leq \prod\limits_{\textbf n\in\mathbb{Z}^d}\left(\sum\limits_{a_{\textbf n}\in\mathbb{N}}e^{-\delta a_{\textbf n}\ln^{\sigma}\lfloor \textbf n\rfloor}\right)
=\prod\limits_{\textbf n\in\mathbb{Z}^d}\frac{1}{1-e^{-\delta \ln^{\sigma}\lfloor \textbf n\rfloor}}.$$
\end{proof}

\begin{lem}\label{a5}
For $\sigma>2$ and $0<\delta\ll 1$, then we have
\begin{equation}\label{122401}
\prod_{\textbf n\in\mathbb{Z}^d}\frac{1}{1-e^{-\delta \ln^{\sigma}\lfloor \textbf n\rfloor}}\leq\exp\left\{\left(\frac{100d}{\delta^2}\right)^d\cdot \exp\left\{d{\left(\frac{2d}\delta \right)}^{\frac{1}{\sigma-1}}\right\}\right\}.
\end{equation}
\end{lem}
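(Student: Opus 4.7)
The plan is to take logarithms and show
$$S:=\sum_{\textbf n\in\mathbb{Z}^d}\ln\frac{1}{1-e^{-\delta\ln^\sigma\lfloor\textbf n\rfloor}}\leq \left(\frac{100d}{\delta^2}\right)^d\cdot \exp\left\{d{\left(\frac{2d}\delta \right)}^{\frac{1}{\sigma-1}}\right\},$$
and to estimate $S$ by splitting the sum at the threshold $N_0:=\exp\{(1/\delta)^{1/\sigma}\}$ (so that $\delta\ln^\sigma N_0=1$), handling the small region $\|\textbf n\|\leq N_0$ and the large region $\|\textbf n\|>N_0$ separately.

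For the small region, I would apply the elementary lower bound $1-e^{-y}\geq y/2$ valid for $0\leq y\leq 1$, so that each summand is at most $\ln(2/(\delta\ln^\sigma\lfloor\textbf n\rfloor))\leq C\ln(1/\delta)$ using $\lfloor\textbf n\rfloor\geq 2^{10}$. The number of lattice points is at most $(2N_0+1)^d\leq 3^d\exp\{d(1/\delta)^{1/\sigma}\}$, yielding a contribution bounded by $C\cdot 3^d\ln(1/\delta)\cdot\exp\{d(1/\delta)^{1/\sigma}\}$. For the large region, since $e^{-\delta\ln^\sigma\lfloor\textbf n\rfloor}\leq e^{-1}<1/2$, the estimate $-\ln(1-y)\leq 2y$ reduces the contribution to $2\sum_{\|\textbf n\|>N_0}e^{-\delta\ln^\sigma\|\textbf n\|}$. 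Shell counting (the number of $\textbf n\in\mathbb{Z}^d$ with $\|\textbf n\|\in[k,k+1)$ is $O(c_d\,k^{d-1})$) together with the substitution $y=\ln r$ reduces this to $C_d\int_{\ln N_0}^{\infty}e^{dy-\delta y^\sigma}\,dy$. Writing the integrand as $e^{dy-\delta y^\sigma/2}\cdot e^{-\delta y^\sigma/2}$ and maximizing the first factor in the spirit of (\ref{042805}) at $y=(2d/(\delta\sigma))^{1/(\sigma-1)}$ gives the bound $\exp\{d(2d/\delta)^{1/(\sigma-1)}\}$ (using $\sigma>2$), while the residual factor integrates to $O(1/\delta)$.

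Combining the two pieces, the large-region exponential $\exp\{d(2d/\delta)^{1/(\sigma-1)}\}$ appears as desired, and the small-region factor $\exp\{d(1/\delta)^{1/\sigma}\}$ is dominated by it since $1/\sigma<1/(\sigma-1)$ for $\sigma>2$; all combinatorial and polynomial overhead ($C_d$, $3^d$, $\ln(1/\delta)$, $1/\delta$) is comfortably absorbed into the outer factor $(100d/\delta^2)^d$ when $\delta$ is small. The main obstacle is the bookkeeping of constants: producing the precise exponent $(2d/\delta)^{1/(\sigma-1)}$ requires applying the technique behind (\ref{042805}) with the effective parameter $\delta/2$ while absorbing the linear coefficient $d$, and one must verify that the small-region contribution $\exp\{d(1/\delta)^{1/\sigma}\}\cdot\ln(1/\delta)$ truly sits below the target rather than contributing a term of comparable size.
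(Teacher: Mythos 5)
Your proof is correct and follows essentially the same strategy as the paper's: split the product at a threshold of size roughly $\exp\{\delta^{-1/\sigma}\}$, bound the finitely many factors in the inner region by a crude per-factor estimate times a lattice-point count, and control the tail by linearizing $-\ln(1-y)$ and applying the saddle-point estimate of Lemma \ref{8.6}/(\ref{042805}). The only difference is cosmetic: the paper converts the tail sum over $\mathbb{Z}^d$ to the $d$-th power of a one-dimensional sum (via Lemma \ref{lem2}), whereas you use shell counting and an integral --- both are routine and lead to the same exponent $d(2d/\delta)^{1/(\sigma-1)}$.
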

\begin{proof}
If $x\in(0,3-2\sqrt{2})$, one has
\begin{equation}\label{042506}\frac{1}{1-e^{-x}}\leq \frac{1}{x^{2}}
\end{equation}
and
\begin{equation}\label{022208}
\ln\left(\frac{1}{1-x}\right)\leq\sqrt{x}.
\end{equation}
Let \begin{equation*}\label{ntheta}
N=\exp\left\{\left(\ln\left(3+2\sqrt{2}\right)\delta^{-1}\right)^{\frac1\sigma}\right\}.
\end{equation*}If $\lfloor \textbf n\rfloor >N$, then one has
\begin{equation*}
-\delta\ln^{\sigma}\lfloor \textbf n\rfloor\leq \ln\left(3-2\sqrt{2}\right),
\end{equation*}
which implies
\begin{equation}\label{022207}
e^{-\delta\ln^{\sigma}\lfloor \textbf n\rfloor}\leq 3-2\sqrt{2}.
\end{equation}
Hence we have
\begin{eqnarray*}
\nonumber&&\prod\limits_{\textbf n\in\mathbb{Z}^d}\frac{1}{1-e^{-\delta \ln^{\sigma}\lfloor \textbf n\rfloor}}
\nonumber\\
&=&\nonumber\left(\prod\limits_{\lfloor \textbf n\rfloor\leq N}\frac{1}{1-e^{-\delta \ln^{\sigma}\lfloor \textbf n\rfloor }}\right)\left(\prod\limits_{\lfloor \textbf n\rfloor> N}\frac{1}{1-e^{-\delta \ln^{\sigma} \lfloor \textbf n\rfloor }}\right)\\
&\leq& \left(\prod\limits_{\lfloor \textbf n\rfloor\leq N}\frac{1}{1-e^{-\delta }}\right)\left(\prod\limits_{\lfloor \textbf n\rfloor> N}\frac{1}{1-e^{-\delta \ln^{\sigma} \lfloor \textbf n\rfloor }}\right),\end{eqnarray*}
On one hand, using (\ref{042506}) we have
\begin{eqnarray}\label{022210}
\prod\limits_{\lfloor \textbf n\rfloor\leq N}\frac{1}{1-e^{-\delta }}\leq \left(\frac{1}{\delta^2}\right)^{(2N+1)^d}\leq \left(\frac{1}{\delta}\right)^{6^dN^d}.
\end{eqnarray}
On the other hand, by (\ref{022208}) and (\ref{022207}) one has
\begin{eqnarray}
\nonumber&&\prod\limits_{\lfloor \textbf n\rfloor> N}\frac{1}{1-e^{-\delta \ln^{\sigma} \lfloor \textbf n\rfloor }}\\
&=&\nonumber \exp\left\{\sum_{\lfloor \textbf n\rfloor >N}\ln\left(\frac{1}{1-e^{-\delta \ln^{\sigma} \lfloor \textbf n\rfloor }}\right)\right\}\\
&\leq&\exp\left\{\sum_{\lfloor \textbf n\rfloor >N}e^{-\frac\delta2 \ln^{\sigma} \lfloor \textbf n\rfloor }\right\}\nonumber\\
&\leq&\exp\left\{\left(1+2\sum_{j\geq 1}e^{-\frac\delta{2d} \ln^{\sigma}j}\right)^d\right\}\nonumber\\
&\leq&\exp\left\{\left(\frac{20d}\delta\right)^d\cdot \exp\left\{d\left(\frac{2d}{\delta}\right)^{\frac1{\sigma-1}}\right\}\right\}\label{022211},
\end{eqnarray}
where the last inequality is based on (\ref{0418011}) in Lemma \ref{lem2}.

Combing (\ref{022210}) and (\ref{022211}), we finish the proof of (\ref{122401}) using $0<\delta\ll 1$.

\end{proof}
\begin{lem}
\label{b1}For $\sigma>2$, $\delta\in(0,1)$, $p=1,2$ and $a=(a_{\textbf n})_{\textbf n\in\mathbb{Z}^d}\in\mathbb{N}^{\mathbb{Z}^d}$, then we have
\begin{equation}\label{042807}
\prod_{\textbf n\in\mathbb{Z}^d}\left(1+a_{\textbf n}^p\right)e^{-2\delta a_{\textbf n}\ln^{\sigma} \lfloor \textbf n\rfloor}\leq \exp\left\{3dp\left(\frac{p}{\delta}\right)^{\frac 1{\sigma-1}}\cdot\exp\left\{\left(\frac1\delta\right)^{\frac1\sigma}\right\}
\right\}.
\end{equation}
\end{lem}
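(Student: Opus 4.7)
Take logarithms to reduce the claim to
\begin{equation*}
S := \sum_{\textbf n\in\mathbb{Z}^d}\ln\Bigl(\max_{a\in\mathbb{N}}(1+a^p)e^{-2\delta a \ln^\sigma\lfloor\textbf n\rfloor}\Bigr)\leq 3dp\Bigl(\frac p\delta\Bigr)^{\frac1{\sigma-1}}\exp\Bigl\{\Bigl(\frac1\delta\Bigr)^{\frac1\sigma}\Bigr\},
\end{equation*}
which is legitimate since the factors are independent and positive, so $\prod_{\textbf n}f_{\textbf n}(a_{\textbf n})\leq\prod_{\textbf n}\max_af_{\textbf n}(a)$. Writing $b_{\textbf n}:=\ln^\sigma\lfloor\textbf n\rfloor$ and using $1+a^p\leq (1+a)^p$ (valid for integer $p\geq 1$), one-variable calculus gives that the maximum of $(1+a)^pe^{-2\delta ab}$ is attained at $a^{*}=p/(2\delta b)-1$ when this is nonnegative, with log-value $p\ln(p/(2\delta b))-p+2\delta b$; otherwise the max is simply $1$. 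Consequently only the finitely many $\textbf n$ with $\lfloor\textbf n\rfloor<N:=\exp\{(p/(2\delta))^{1/\sigma}\}$ contribute, and each contribution is nonnegative.

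\paragraph{Laplace-type evaluation.} Set $M:=(p/(2\delta))^{1/\sigma}$ and $u:=\ln\lfloor\textbf n\rfloor\in[10\ln 2,M)$, so $b_{\textbf n}=u^\sigma$ and the per-$\textbf n$ contribution equals $p\sigma\ln(M/u)-p+p(u/M)^\sigma$. Setting $v:=M-u$ and Taylor expanding both $\sigma\ln(M/(M-v))$ and $(1-v/M)^\sigma$, the linear terms cancel and the contribution behaves as $p\sigma^2v^2/(2M^2)+O(v^3/M^3)$, a sharp quadratic vanishing at the boundary $u=M$. Combining this with the lattice-point bound $\#\{\textbf n\in\mathbb Z^d:\|\textbf n\|\leq R\}\leq(2R+1)^d$ and converting the sum to an integral in $u$ (so that $r^{d-1}dr\mapsto e^{ud}du$) yields
\begin{equation*}
S\leq C\,\frac{p\sigma^2}{M^2}\int_0^Mv^2e^{(M-v)d}\,dv=C\,\frac{p\sigma^2\,e^{Md}}{M^2}\int_0^Mv^2e^{-vd}\,dv\leq C'\,\frac{p\sigma^2\,e^{Md}}{M^2d^3},
\end{equation*}
where the inner integral is handled by $\int_0^\infty v^2e^{-vd}dv=2/d^3$, and the estimate $x^pe^{-\delta x}\leq(p/(e\delta))^p$ from Lemma \ref{8.6} controls the residual contribution from the shell $\lfloor\textbf n\rfloor=2^{10}$.

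\paragraph{Main obstacle and bookkeeping.} The delicate point is arranging the preceding bound into the precise form on the right-hand side: the Laplace evaluation naturally produces $e^{Md}=\exp\{d(p/(2\delta))^{1/\sigma}\}$, whereas the target isolates the dimension $d$ polynomially in front of a pure $\exp\{(1/\delta)^{1/\sigma}\}$. Reconciling these relies on the hypothesis $\sigma>2$, which forces $(p/\delta)^{1/(\sigma-1)}$ to strictly dominate $(p/\delta)^{1/\sigma}$, and on a dyadic decomposition $\lfloor\textbf n\rfloor\in[2^k,2^{k+1})$ together with the geometric-series estimate of Lemma \ref{lem2} to extract the polynomial prefactor $3dp(p/\delta)^{1/(\sigma-1)}$. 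The argument parallels in style the final bookkeeping of Lemmas \ref{a3} and \ref{a5}, where similar exponential-in-$d$ bounds are traded against the strong decay of $e^{-\delta\ln^\sigma\lfloor\textbf n\rfloor}$; this reduction of constants is where all the work of the proof is concentrated.
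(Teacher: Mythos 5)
Your route is genuinely different from the paper's, and considerably more elaborate. The paper's proof of Lemma \ref{b1} is short and elementary: after restricting to indices with $a_{\textbf n}\geq 1$, it bounds \emph{each} factor uniformly by $\exp\{p(p/\delta)^{1/(\sigma-1)}\}$ using the single-variable max estimate for $x^p e^{-\delta\ln^\sigma x}$ established at the start of the proof, observes that the factor drops below $1$ once $\ln^\sigma\lfloor\textbf n\rfloor\geq\delta^{-1}$, and simply multiplies over the finitely many surviving indices $\lfloor\textbf n\rfloor< e^{(1/\delta)^{1/\sigma}}$. There is no per-index optimization over $a$, no Taylor expansion about a critical radius, and no Laplace evaluation of a density-weighted integral.

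Your sketch has two concrete gaps. First, the quadratic $p\sigma^2 v^2/(2M^2)$ is only the leading Taylor term of the per-$\textbf n$ log-contribution near $v=0$ (i.e.\ near $u=M$); you then use it as a global upper bound when you write $S\leq C\,p\sigma^2/M^2\int_0^M v^2 e^{(M-v)d}\,dv$. But as $v\to M$ (i.e.\ $u=\ln\lfloor\textbf n\rfloor\to 0$) the actual expression $p\sigma\ln(M/u)-p+p(u/M)^\sigma$ diverges logarithmically, and no constant $C$ makes it $\leq Cv^2$ on $[0,M)$. The passing reference to Lemma \ref{8.6} to ``control the residual contribution from the shell $\lfloor\textbf n\rfloor=2^{10}$'' does not supply the missing estimate. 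Second, and more seriously, the final paragraph names the key difficulty but does not resolve it. Your Laplace computation produces $e^{Md}=\exp\{d\,(p/(2\delta))^{1/\sigma}\}$, with the dimension $d$ sitting \emph{inside} the exponential, while the target is $\exp\{(1/\delta)^{1/\sigma}\}$ with no such $d$. For $d\geq 2$ the ratio $\exp\{d(p/(2\delta))^{1/\sigma}\}/\exp\{(1/\delta)^{1/\sigma}\}$ blows up as $\delta\to 0$, so no polynomial prefactor in $1/\delta$ can absorb the discrepancy. The hypothesis $\sigma>2$ only controls the outer prefactor $(p/\delta)^{1/(\sigma-1)}$ against $(p/\delta)^{1/\sigma}$, and a dyadic decomposition with Lemma \ref{lem2} cannot move a factor of $d$ out of an exponent. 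What you label ``bookkeeping'' is the actual crux of the argument, and as written it does not close.
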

\begin{proof}
Let
\begin{equation*}
f_{p,\sigma,\delta}(x)=x^{p}e^{-\delta\ln^{\sigma}x},
\end{equation*}
and one has
\begin{equation}\label{030102}
\max_{x\geq 1}f_{p,\sigma,\delta}(x)\leq \exp\left\{p\left(\frac{p}{\delta}\right)^{\frac1{\sigma-1}}\right\}.
\end{equation}
In fact
\begin{equation*}
f'_{p,\sigma,\delta}(x)=px^{p-1}e^{-\delta\ln^{\sigma}x}
+x^{p-1}e^{-\delta\ln^{\sigma}x}\left(-\delta\sigma\ln^{\sigma-1}x\right)
\end{equation*}
and then
\begin{equation*}
f'_{p,\sigma,\delta}(x)=0\Leftrightarrow x=\exp\left\{\left(\frac{p}{\delta\sigma}\right)^{\frac1{\sigma-1}}\right\}.
\end{equation*}
Hence one has
\begin{equation*}
\max_{x\geq 1}f_{p,\sigma,\delta}(x)\leq f\left(\exp\left\{\left(\frac{p}{\delta\sigma}\right)^{\frac1{\sigma-1}}\right\}\right)
\leq\exp\left\{p\left(\frac{p}{\delta}\right)^{\frac1{\sigma-1}}\right\}.
\end{equation*}
Note that
$$\prod_{\textbf n\in\mathbb{Z}^d}\left(1+a_{\textbf n}^2\right)e^{-2\delta a_{\textbf n}\ln^{\sigma}\lfloor \textbf n\rfloor}=\prod_{\textbf n\in\mathbb{Z}^d\atop a_{\textbf n}\geq1}\left(1+a_{\textbf n}^2\right)e^{-2\delta a_{\textbf n}\ln^{\sigma}\lfloor \textbf n\rfloor}.$$
Then we can assume $a_{\textbf n}\geq 1\ \mbox{for}\ \forall\  \textbf n\in\mathbb{Z}^d$ in what follows. Thus one has
\begin{eqnarray}
\nonumber&&\left(1+a_{\textbf n}^p\right)e^{-2\delta a_{\textbf n}\ln^{\sigma}\lfloor \textbf n\rfloor}\\
\nonumber&\leq&\left( 2a_{\textbf n}\right)^pe^{-2\delta a_{\textbf n}\ln^{\sigma}\lfloor \textbf n\rfloor}\\
\nonumber&=&\frac{1}{\ln^{\sigma p}\lfloor \textbf n\rfloor}\cdot\left(2a_{\textbf n}\ln^{\sigma}\lfloor \textbf n\rfloor\right)^pe^{-2\delta a_{\textbf n}\ln^{\sigma}\lfloor \textbf n\rfloor}\\
\label{160516}&\leq&\exp\left\{p\left(\frac{p}{\delta}\right)^{\frac1{\sigma-1}}\right\},
\end{eqnarray}
where the last inequality is based on (\ref{030102}) and using the fact that
\begin{equation*}
\frac{1}{\ln^{\sigma p}\lfloor \textbf n\rfloor}\leq 1.
\end{equation*}
On the other hand, for $p=1,2$ one has
\begin{equation}\label{042806}
(1+a_{\textbf n}^p)e^{-2\delta a_{\textbf n}\ln^{\sigma} \lfloor \textbf n\rfloor}\leq 1
\end{equation}
when $\ln^{\sigma} \lfloor \textbf n\rfloor\geq \delta^{-1}$.

Therefore, we have
\begin{eqnarray*}
&&\prod_{\textbf n\in\mathbb{Z}^d}\left(1+a_{\textbf n}^p\right)e^{-2\delta a_{\textbf n}\ln^{\sigma}\lfloor \textbf n\rfloor}\\
&=&\left(\prod_{\ln^{\sigma} \lfloor \textbf n\rfloor< \delta^{-1}}\left(1+a_{\textbf n}^p\right)e^{-2\delta a_{\textbf n}\ln^{\sigma}\lfloor \textbf n\rfloor}\right)\left(\prod_{\ln^{\sigma} \lfloor \textbf n\rfloor\geq \delta^{-1}}\left(1+a_{\textbf n}^p\right)e^{-2\delta a_{\textbf n}\ln^{\sigma}\lfloor \textbf n\rfloor}\right)\\
&\leq&\prod_{\ln^{\sigma} \lfloor \textbf n\rfloor<\delta^{-1}}\left(1+a_{\textbf n}^p\right)e^{-2\delta a_{\textbf n}\ln^{\sigma}\lfloor \textbf n\rfloor}\qquad \mbox{(in view of (\ref{042806}))}\\
&\leq&\prod_{\ln^{\sigma} \lfloor \textbf n\rfloor<\delta^{-1}}\exp\left\{p\left(\frac{p}{\delta}\right)^{\frac1{\sigma-1}}\right\}\qquad \ \mbox{(in view of (\ref{160516}))}\\
&\leq&\left(\exp\left\{p\left(\frac{p}{\delta}\right)^{\frac1{\sigma-1}}\right\}\right)^{3d\exp\left\{\left(\frac1\delta\right)^{\frac1\sigma}\right\}}\ \ \ \\
&=&\exp\left\{3dp\left(\frac{p}{\delta}\right)^{\frac 1{\sigma-1}}\cdot\exp\left\{\left(\frac1\delta\right)^{\frac1\sigma}\right\}
\right\},
\end{eqnarray*}
which finishes the proof of (\ref{042807}).
\end{proof}

\subsection{Measure Estimate}
\begin{lem}\label{050601}
Let the set $\Pi $, which is defined by (\ref{050701}),
with probability measure. Then there exists a subset $\Pi_{\gamma}\subset\Pi$ satisfying
\begin{equation}
\mbox{meas}\ \Pi_{\gamma}\leq C\gamma,
\end{equation}
where $C$ is a positive constant, such that for any $\omega\in\Pi\setminus\Pi_{\gamma}$, the inequalities (\ref{040601}) and (\ref{040602}) holds.
\end{lem}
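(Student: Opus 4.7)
The plan is to decompose $\Pi_\gamma = \Pi_\gamma^{(1)} \cup \Pi_\gamma^{(2)}$ according to which of the two inequalities (\ref{040601}) or (\ref{040602}) fails, and bound each piece separately by $C\gamma$ via a Fubini-type argument. For each $0 \neq \ell \in \mathbb{Z}^{\mathbb{Z}^d}$ with $|\ell|<\infty$, let $\mathcal{B}_\ell^{(i)}$ denote the set of $\omega \in \Pi$ violating the $i$-th inequality for this specific $\ell$. The central probabilistic lemma I would prove is: whenever $\ell_{\textbf{n}^*}\neq 0$, conditioning on $(\omega_{\textbf{n}})_{\textbf{n}\neq \textbf{n}^*}$ and using that $\omega_{\textbf{n}^*}$ is uniformly distributed on $[0,\langle \textbf{n}^*\rangle^{-1}]$ while the map $\omega_{\textbf{n}^*}\mapsto \sum_{\textbf{n}}\ell_{\textbf{n}}\omega_{\textbf{n}}$ is affine with slope $|\ell_{\textbf{n}^*}|$, the conditional probability that $\left|\left|\left|\sum_{\textbf{n}}\ell_{\textbf{n}}\omega_{\textbf{n}}\right|\right|\right|<\epsilon$ is at most $C\epsilon\langle \textbf{n}^*\rangle/|\ell_{\textbf{n}^*}|$.

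For $\mathcal{B}_\ell^{(1)}$, I would apply this lemma with $\textbf{n}^* = \textbf{n}_1^*(\ell)$ and $\epsilon$ equal to the RHS of (\ref{040601}), obtaining $\mathrm{meas}(\mathcal{B}_\ell^{(1)}) \leq C\gamma\, \frac{\langle \textbf{n}_1^*\rangle}{|\ell_{\textbf{n}_1^*}|}\prod_{\textbf{n}}\frac{1}{1+|\ell_{\textbf{n}}|^3\langle \textbf{n}\rangle^{d+4}}$. The summation over $\ell$ is then handled by partitioning on $\textbf{m} = \textbf{n}_1^*(\ell)$: the pivot coordinate contributes $\sum_{k\neq 0}\frac{\langle \textbf{m}\rangle/|k|}{1+|k|^3\langle \textbf{m}\rangle^{d+4}} \leq C\langle \textbf{m}\rangle^{-(d+3)}$, while each remaining coordinate contributes $\sum_k\frac{1}{1+|k|^3\langle \textbf{n}\rangle^{d+4}} \leq 1 + C\langle \textbf{n}\rangle^{-(d+4)}$. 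The resulting infinite product and the outer sum $\sum_{\textbf{m}}\langle \textbf{m}\rangle^{-(d+3)}$ both converge (since $d+3>d$), yielding $\mathrm{meas}(\Pi_\gamma^{(1)}) \leq C\gamma$.

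For $\mathcal{B}_\ell^{(2)}$, note first that the strict inequality $\|\textbf{n}_3^*(\ell)\|<\|\textbf{n}_2^*(\ell)\|$ forces $|\ell|\geq 3$. I would pivot on $\textbf{n}^* = \textbf{n}_3^*(\ell)$, which does appear in the restricted product on the RHS of (\ref{040602}). The tenth-power factors $(1+|\ell_{\textbf{n}}|^3\langle \textbf{n}\rangle^{d+7})^{-10}$ provide very strong decay, easily absorbing both the pivot factor $\langle \textbf{n}_3^*\rangle/|\ell_{\textbf{n}_3^*}|$ and the sum over the restriction of $\ell$ to $\{\textbf{n}:\|\textbf{n}\|\leq \|\textbf{n}_3^*(\ell)\|\}$.

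The main obstacle is the outer summation over the top indices $(\textbf{n}_1^*,\textbf{n}_2^*)$ and their multiplicities $(\ell_{\textbf{n}_1^*},\ell_{\textbf{n}_2^*})$: since these do not appear in the RHS of (\ref{040602}), the product provides no decay in them. To overcome this, I would sharpen the basic lemma to a joint bound on the pair $(\omega_{\textbf{n}_1^*},\omega_{\textbf{n}_2^*})$: the bad set inside the rectangle $[0,\langle \textbf{n}_1^*\rangle^{-1}]\times[0,\langle \textbf{n}_2^*\rangle^{-1}]$ is a parallel strip whose area is controlled by $\epsilon$ together with the smaller of $\langle \textbf{n}_1^*\rangle^{-1}/|\ell_{\textbf{n}_2^*}|$ and $\langle \textbf{n}_2^*\rangle^{-1}/|\ell_{\textbf{n}_1^*}|$. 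The extra factor $\gamma^5$ (rather than $\gamma$) in (\ref{040602}) then supplies four spare powers of $\gamma$ that absorb the polynomial summation in $(\textbf{n}_1^*,\textbf{n}_2^*,\ell_{\textbf{n}_1^*},\ell_{\textbf{n}_2^*})$. Combining these pieces gives $\mathrm{meas}(\Pi_\gamma^{(2)}) \leq C\gamma$, and hence $\mathrm{meas}(\Pi_\gamma) \leq C\gamma$.
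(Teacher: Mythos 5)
Your argument for the first Diophantine condition is essentially sound, though you pivot on the \emph{largest} index $\textbf{n}_1^*(\ell)$ while the paper pivots on the \emph{smallest} index $m(\ell)$; the paper's choice is slightly cleaner because it gives the direct bound $\mathrm{meas}\,\mathcal{R}_{\ell,j}\leq \gamma\, m(\ell)\prod_{\textbf n}(1+|\ell_{\textbf n}|^3\langle\textbf n\rangle^{d+4})^{-1}$ without any division by $|\ell_{\textbf n^*}|$, but your variant closes as well.

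The argument for the second condition, however, has a genuine gap. You correctly identify that the right-hand side of (\ref{040602}) provides no decay in the two top indices $\textbf n_1^*,\textbf n_2^*$, but the proposed fix does not resolve this. Your joint two-variable estimate gives, after normalizing by the cell area $\langle\textbf n_1^*\rangle^{-1}\langle\textbf n_2^*\rangle^{-1}$, a conditional probability of order $C\epsilon\min\!\left(\langle\textbf n_1^*\rangle/|\ell_{\textbf n_1^*}|,\;\langle\textbf n_2^*\rangle/|\ell_{\textbf n_2^*}|\right)$, which \emph{grows} with $\|\textbf n_2^*\|$. Summing over the unbounded range $\|\textbf n_1^*\|\geq\|\textbf n_2^*\|>\|\textbf n_3^*(\ell)\|$ therefore diverges, and a fixed constant factor $\gamma^4$ cannot absorb a divergent sum over $\mathbb{Z}^d\times\mathbb{Z}^d$. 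Nothing in your estimate restricts $\textbf n_1^*,\textbf n_2^*$ to a finite set.

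The idea you are missing is the one the paper actually uses: the second Diophantine condition is established \emph{conditionally on the first one}, together with the decaying structure $\omega_{\textbf n}\in[0,\langle\textbf n\rangle^{-1}]$ of the set $\Pi$. Writing $\widetilde\ell$ for $\ell$ with the entries at $\textbf n_1^*,\textbf n_2^*$ deleted, mass and momentum conservation force $\widetilde\ell\neq 0$. If $\omega\notin\mathcal{R}_1$, then (\ref{040601}) applied to $\widetilde\ell$ gives a lower bound on $\left|\left|\left|\sum_{\textbf n\neq\textbf n_1^*,\textbf n_2^*}\ell_{\textbf n}\omega_{\textbf n}\right|\right|\right|$, while $\left|\sigma_{\textbf n_1^*}\omega_{\textbf n_1^*}+\sigma_{\textbf n_2^*}\omega_{\textbf n_2^*}\right|\leq 2/\|\textbf n_2^*\|$ because of (\ref{050701}). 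Hence whenever $\|\textbf n_2^*\|\geq\frac{4}{\gamma}\prod_{\|\textbf n\|\leq\|\textbf n_3^*(\ell)\|}\left(1+|\ell_{\textbf n}|^3\langle\textbf n\rangle^{d+4}\right)$, the inequality (\ref{040602}) holds automatically and the bad set is \emph{empty}. Only the finitely many pairs $(\textbf n_1^*,\textbf n_2^*)$ violating this size threshold remain, and both $\|\textbf n_1^*\|$ and $\|\textbf n_2^*\|$ are then bounded in terms of $\ell$ restricted to $\{\|\textbf n\|\leq\|\textbf n_3^*(\ell)\|\}$, so the summation converges; the extra powers of $\gamma$ and the tenth-power weights in (\ref{040602}) are what cover the resulting polynomial count, not any infinite sum. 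Without this reduction from (\ref{040601}), the estimate for $\mathcal{R}_2$ does not close.
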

\begin{proof}
Define the resonant set $\mathcal{R}_{1}$ by
\begin{equation}\label{020701}
\mathcal{R}_1=\bigcup_{j\in\mathbb{Z}\atop 0\neq\ell\in\mathbb{Z}^{\mathbb{Z}^d}}\mathcal{R}_{\ell,j},
\end{equation}
where
\begin{eqnarray*}
 \mathcal{R}_{\ell,j}=\left\{\omega:\left| \sum_{\textbf n\in\mathbb{Z}^d}{\ell}_{\textbf n}\omega_{\textbf n}+j\right|< \gamma \prod_{\textbf n\in \mathbb{Z}^d}\frac{1}{1+|\ell_{\textbf n}|^3\langle \textbf n\rangle^{d+4}}\right\},
\end{eqnarray*}
 for any $0\neq \ell\in\mathbb{Z}^{\mathbb{Z}^d}$ with $|\ell|<\infty$. Let
\begin{equation*}
m(\ell)=\min\left\{\langle\textbf n\rangle:{\ell}_{\textbf{n}}\neq 0\right\},
\end{equation*}
and one has
\begin{equation}\label{033110}
\mbox{meas}\ \mathcal{R}_{\ell,j}\leq \gamma\cdot m(\ell) \prod_{\textbf n\in\mathbb{Z}^d}\frac{1}{1+|\ell_{\textbf n}|^3\langle\textbf n\rangle^{d+4}}.
\end{equation}
In view of (\ref{050701}), one has
\begin{equation}
\left| \sum_{\textbf n\in\mathbb{Z}^d}{\ell}_{\textbf n}\omega_{\textbf n}\right|\leq |\ell|\leq \prod_{\textbf n\in \mathbb{Z}^d\atop \ell_{\textbf n}\neq 0}{|\ell_{\textbf n}|\langle \textbf n\rangle}.
\end{equation}
Hence if
\begin{equation}
|j|>2\prod_{\textbf n\in \mathbb{Z}^d\atop \ell_{\textbf n}\neq 0}{|\ell_{\textbf n}|\langle \textbf n\rangle},
\end{equation}
then one has
\begin{equation}
\left| \sum_{\textbf n\in\mathbb{Z}^d}{\ell}_{\textbf n}\omega_{\textbf n}+j\right|\geq |j|-\left| \sum_{\textbf n\in\mathbb{Z}^d}{\ell}_{\textbf n}\omega_{\textbf n}\right|\geq 1,
\end{equation}
which implies the set $\mathcal{R}_{\ell,j}$ is empty.
Therefore we always assume that
\begin{equation}\label{050801.}
|j|\leq2\prod_{\textbf n\in \mathbb{Z}^d\atop \ell_{\textbf n}\neq 0}{|\ell_{\textbf n}|\langle \textbf n\rangle}.
\end{equation}
Furthermore, we have
\begin{eqnarray}\label{032901}
\mbox{meas}\ \mathcal{R}_1\leq \gamma \sum_{\ell\in\mathbb{Z}^{\mathbb{Z}^d}}\sum_{j\in\mathbb{Z}\atop j\ satisfies \ (\ref{050801.})}m(\ell)\left(\prod_{\textbf n\in\mathbb{Z}^d}\frac{1}{1+|\ell_{\textbf n}|^3\langle\textbf n\rangle^{d+4}}\right),
\end{eqnarray}
and then
\begin{eqnarray}
&&\nonumber\sum_{\ell\in\mathbb{Z}^{\mathbb{Z}^d}}\sum_{j\in\mathbb{Z}\atop j\ satisfies \ (\ref{050801.})}m(\ell)\left(\prod_{\textbf n\in\mathbb{Z}^d}\frac{1}{1+|\ell_{\textbf n}|^3\langle\textbf n\rangle^{d+4}}\right)\\
&=&\nonumber\sum_{m\geq 1}\sum_{\ell\in\mathbb{Z}^{\mathbb{Z}^d}\atop m(\ell)=m,\ell_{\textbf n}\neq 0}\sum_{j\in\mathbb{Z}\atop j\ satisfies \ (\ref{050801.})}m\left(\prod_{\textbf n\in\mathbb{Z}^d}\frac{1}{1+|\ell_{\textbf n}|^3\langle\textbf n\rangle^{d+4}}\right)\\
&\leq&4\sum_{m\geq 1}m\prod_{\textbf n\in\mathbb{Z}^d\atop \langle\textbf n\rangle\geq m}\left(
\sum_{0\neq\ell_{\textbf n}\in\mathbb{Z}}\frac{1}{|\ell_{\textbf n}|^2\langle\textbf n\rangle^{d+3}}\right)\nonumber\\
&\leq&4\left(\sum_{0\neq i\in\mathbb{Z}}i^{-2}\right)\nonumber\left(\sum_{m\geq 1}m\sum_{\textbf n\in{\mathbb{Z}^d}\atop \langle \textbf n\rangle\geq m}\frac{1}{\langle \textbf n\rangle^{d+3}}\right)\\
&\leq\nonumber&4\left(\sum_{0\neq i\in\mathbb{Z}}i^{-2}\right)\left(\sum_{\textbf{n}\in\mathbb{Z}^d}\frac{1}{\langle \textbf n\rangle^{d+2}}\right)\\
&\leq&C_1\label{032902},
\end{eqnarray}
where $C_1$ is an absolutely positive constant. Combining (\ref{032901}) and (\ref{032902}), one has
\begin{equation}\label{040201}
\mbox{meas}\ \mathcal{R}_1\leq C_1\gamma.
\end{equation}

Define the resonant set
\begin{equation}
\mathcal{R}_2=\bigcup_{ j\in\mathbb{Z}\atop0\neq\ell\in\mathbb{Z}^{{\mathbb{Z}^d}}}\widetilde {\mathcal{R}}_{\ell,j},
\end{equation}
where
the resonant set $\widetilde{\mathcal{R}}_{\ell,j}$ is given by
\begin{eqnarray}
 \widetilde{\mathcal{R}}_{\ell,j}=\left\{\omega:\left| \sum_{\textbf{n}\in\mathbb{Z}^d}{\ell}_{\textbf n}\omega_{\textbf n}+j\right|< \frac{\gamma^3}{16} \prod_{\textbf{n}\in \mathbb{Z}^d\atop \left\|\textbf n\right\|\leq \left\|{\textbf n}_3^*(\ell)\right\|}\left(\frac{1}{1+|\ell_{\textbf n}|^3\langle \textbf n\rangle ^{d+7}}\right)^4\right\},
\end{eqnarray}
for any $0\neq \ell\in\mathbb{Z}^{\mathbb{Z}^d}$ satisfying  $0<|\ell|<\infty$ and
\begin{equation*}
\left\|{\textbf n}_2^{*}(\ell)\right\|>\left\|{\textbf n}_3^{*}(\ell)\right\|.
\end{equation*}
As (\ref{033110}), one has
\begin{equation}\label{012802}
\mbox{meas}\ \widetilde{\mathcal{R}}_{\ell,j}\leq \frac{\gamma^3}{16}\cdot m(\ell)\prod_{\textbf n\in\mathbb{Z}^d\atop \left\|\textbf n\right\|\leq  \left\|{\textbf n}_3^{*}(\ell)\right\|}\left(\frac{1}{1+|\ell_{\textbf n}|^3\langle \textbf n\rangle^{d+7}}\right)^4.
\end{equation}
Without loss of generality, we assume that
\begin{equation*}
\textbf n_1={\textbf n}_1^{*}(\ell),\qquad \textbf n_2={\textbf n}_2^{*}(\ell).
\end{equation*}
Then  one has
\begin{equation}\label{060901}
 \sum_{\textbf n\in\mathbb{Z}^d}\ell_{\textbf n}\omega_{\textbf n}=\left( \sum_{\textbf n\in\mathbb{Z}^d,\atop \textbf n\neq \textbf n_1,\textbf n_2}\ell_{\textbf n}\omega_{\textbf n}\right)+\sigma_{\textbf n_1}\omega_{\textbf n_1}+\sigma_{\textbf n_2}\omega_{\textbf n_2},
\end{equation}
where $\sigma_{\textbf n_1},\sigma_{\textbf n_2}\in\left\{-1,1\right\}$. Define $\widetilde {\ell}=(\widetilde \ell_{\textbf n})_{\textbf n\in\mathbb{Z}^d}$ by
\begin{equation*}
\widetilde \ell_{\textbf n}=\ell_{\textbf n},\qquad \mbox{if}\ \textbf n\neq \textbf n_1,\textbf n_2;
\end{equation*}
and
\begin{equation*}
\widetilde \ell_{\textbf n}=0,\qquad \mbox{if}\ \textbf n= \textbf n_1,\textbf n_2.
\end{equation*}

If $\widetilde \ell=0$, using \textbf{Momentum Conversation} (\ref{050901}) one has
\begin{equation}
\sigma_{\textbf n_1}\cdot \sigma_{\textbf n_2}=-1
\end{equation}
and
\begin{equation}
\textbf n_1=\textbf n_2,
\end{equation}
which implies $\ell=0$. Therefore we always assume that $\widetilde \ell \neq 0$.

Hence, if $\omega\in\Pi\setminus\mathcal{R}_1$ (where $\mathcal{R}_1$ is defined in (\ref{020701})) and
\begin{equation}\label{012801}
\left\|\textbf n_2^{*}(\ell)\right\|\geq \frac4{\gamma}\prod_{\textbf n\in\mathbb{Z}^d\atop \left\|\textbf n\right\|\leq  \left\|{\textbf n}_3^{*}(\ell)\right\|}\left(1+|\ell_{\textbf n}|^3\langle \textbf n\rangle^{d+4}\right),
\end{equation}
then one has
\begin{eqnarray*}
&&\left|\sum_{\textbf n\in\mathbb{Z}^d}\ell_{\textbf n}\omega_{\textbf n}+j\right|\\
&\geq&\left|\left|\left|\sum_{\textbf n\in\mathbb{Z}^d}\ell_{\textbf n}\omega_{\textbf n}\right|\right|\right|\\
&\geq&\left|\left|\left|\sum_{\textbf n\in\mathbb{Z}^d,\atop \textbf n\neq \textbf n_1,\textbf n_2}\ell_{\textbf n}\omega_{\textbf n}\right|\right|\right|-\left|\left|\left|\sigma_{\textbf n_1}\omega_{\textbf n_1}+\sigma_{\textbf n_2}\omega_{\textbf n_2}\right|\right|\right|\qquad (\mbox{by (\ref{060901})})\\
&\geq&  \gamma \prod_{\textbf n\in \mathbb{Z}^d\atop \textbf n\neq \textbf n_1,\textbf n_2}\frac{1}{1+|\ell_{\textbf n}|^3\langle \textbf n\rangle^{d+4}}-\frac2{\left\|\textbf n_2^{*}(\ell)\right\|}\\
&\geq &\frac{\gamma}{2} \prod_{\textbf n\in \mathbb{Z}^d\atop \textbf n\neq \textbf n_1,\textbf n_2}\frac{1}{1+|\ell_{\textbf n}|^3\langle \textbf n\rangle^{d+4}},
\end{eqnarray*}
where the last inequality is based on (\ref{012801}). Therefore we always assume
\begin{equation}\label{012803}
\left\|{\textbf n}_2^{*}(\ell)\right\|< \frac4{\gamma}\prod_{\textbf n\in\mathbb{Z}^d\atop \textbf n\neq \textbf n_1,\textbf n_2}\left(1+|\ell_{\textbf n}|^3\langle \textbf n\rangle^{d+4}\right):=A(\ell).
\end{equation}
In view of the following two inequalities
\begin{equation*}
\left\|{\textbf n}_1^{*}(\ell)\right\|\leq \sum_{i\geq 2}\left\|{\textbf n}_i^{*}(\ell)\right\|,
\end{equation*}
and
\begin{equation*}
\sum_{i\geq 2}\left\|{\textbf n}_i^{*}(\ell)\right\|=\sum_{\textbf n\in\mathbb{Z}^d\atop \textbf n\neq \textbf n_1}\left|\ell_{\textbf n}\right|\cdot\left\|\textbf n\right\|\leq \prod_{\textbf n\in\mathbb{Z}^d\atop \textbf n\neq \textbf n_1}\left(1+|\ell_{\textbf n}|^3\langle \textbf n\rangle^{d+4}\right),
\end{equation*}
one has
\begin{equation}\label{012804}
\left\|{\textbf n}_1^{*}(\ell)\right\|< A(\ell)\left(\prod_{\textbf n\in\mathbb{Z}^d\atop \textbf n\neq \textbf n_1,\textbf n_2}\left(1+|\ell_{\textbf n}|^3\langle \textbf n\rangle^{d+4}\right)\right):=B(\ell).
\end{equation}
Hence if
\begin{equation*}
|j|>2\left(\prod_{\textbf n\in \mathbb{Z}^d,\textbf{n}_1,\textbf n_2\atop \ell_{\textbf n}\neq 0}{|\ell_{\textbf n}|\langle \textbf n\rangle}\right)A(\ell)B(\ell),
\end{equation*}
then one has
\begin{equation}
\left| \sum_{\textbf n\in\mathbb{Z}^d}{\ell}_{\textbf n}\omega_{\textbf n}+j\right|\geq |j|-\left| \sum_{\textbf n\in\mathbb{Z}^d}{\ell}_{\textbf n}\omega_{\textbf n}\right|\geq 1,
\end{equation}
which implies the set $\widetilde{\mathcal{R}}_{\ell,j}$ is empty.
Therefore we always assume that
\begin{equation}\label{050801}
|j|\leq2\left(\prod_{\textbf n\in \mathbb{Z}^d,\textbf{n}_1,\textbf n_2\atop \ell_{\textbf n}\neq 0}{|\ell_{\textbf n}|\langle \textbf n\rangle}\right)A(\ell)B(\ell).
\end{equation}
In view of (\ref{012802}), (\ref{012803}), (\ref{012804}) and following the proof of (\ref{040201}), one has
\begin{equation}\label{012807}
\mbox{meas}\ \mathcal{R}_2\leq C_2\gamma,
\end{equation}
where $C_2$ is a positive constant.

Letting
$$\Pi_{\gamma}=\mathcal{R}_1\bigcup\mathcal{R}_2,$$
then one has
\begin{equation}\label{012806}
\mbox{meas}\ \Pi_{\gamma}\leq C\gamma,
\end{equation}
and for any $\omega\in\Pi\setminus \Pi_{\gamma}$, the inequalities (\ref{040601}) and (\ref{040602}) holds.

\end{proof}

\subsection{Proof of Lemma \ref{052302}}
\begin{proof}
Without loss of generality, it suffices to prove
 \begin{equation}\label{052501}
\left\|\frac{\partial^{2}R}{\partial q_{\textbf m}\partial {q}_{\textbf l}}\right\|^{*}_{\rho+\delta} \leq \left(\frac{12}{e\delta}\right)^2\cdot \exp\left\{\left(\frac{3600d}{\delta^2}\right)^d\cdot \exp\left\{d{\left(\frac{12d}\delta \right)}^{\frac{1}{\sigma-1}}\right\}\right\}\left\|R\right\|_{\rho}.
\end{equation}
For any $ \textbf m, \textbf l\in \mathbb{Z}^d$, one has
\begin{eqnarray}\nonumber
\frac{\partial^{2}R}{\partial q_{\textbf m}\partial {q}_{\textbf l}}&=&  {\sum_{a,k,k'\in\mathbb{N}^{\mathbb{Z}^d}}}k_{\textbf m}k_{\textbf l}R_{akk'}\prod_{\textbf n\in \mathbb{Z}^d}
I(0)^{a}q^{k-e_{\textbf m}-e_{\textbf l}}\bar q^{k'}.
\end{eqnarray}
In view of (\ref{d2}), one obtains
\begin{eqnarray}\nonumber
&&\left\|\frac{\partial^{2}R}{\partial q_{\textbf m}\partial {q}_{\textbf l}}\right\|^{*}_{\rho+\delta} \\
\nonumber &=& \underset{a,k,k'\in\mathbb{N}^{\mathbb{Z}^d}}{\sum}\left|k_{\textbf m}k_{\textbf l}R_{akk'}\right|\exp\left\{-\sum_{\textbf n\in\mathbb{Z}^d}2ra_{\textbf n} \ln^{\sigma}\lfloor \textbf n\rfloor\right\}\\
\nonumber &&\times
\exp\left\{-\left(\rho+\delta\right)\sum_{\textbf n\in\mathbb{Z}^d}\left(k_{\textbf n}+k'_{\textbf n}\right) \ln^{\sigma}\lfloor \textbf n\rfloor+\left(\rho+\delta\right)\left(\ln^{\sigma}\lfloor \textbf m\rfloor+\ln^{\sigma}\lfloor \textbf l\rfloor\right)\right\}\\
\nonumber
&\leq& \underset{a,k,k'\in\mathbb{N}^{\mathbb{Z}^d}}{\sum}|k_{\textbf m}k_{\textbf l}R_{akk'}|\exp\left\{-(\rho+\delta)\left(\sum_{\textbf n\in\mathbb{Z}^d}\left(2a_{\textbf n}+k_{\textbf n}+k'_{\textbf n}\right) \ln^{\sigma}\lfloor \textbf n\rfloor-\ln^{\sigma}\lfloor\textbf m\rfloor-\ln^{\sigma}\lfloor \textbf l\rfloor\right)\right\},
\end{eqnarray}
where the last inequality uses $r>\rho+\delta$.

In view of (\ref{042602}), one has
\begin{equation*}
\left|R_{akk'}\right|\leq \left\|R\right\|_{\rho}\exp\left\{\rho\left(\sum_{\textbf n\in\mathbb{Z}^d}\left(2a_{\textbf n}+k_{\textbf n}+k'_{\textbf n}\right)\ln^{\sigma}\lfloor \textbf n\rfloor-2\ln^{\sigma}\lfloor \textbf n_1^*(a,k,k')\rfloor\right)\right\}
\end{equation*}
and then
\begin{eqnarray}
\nonumber \left\|\frac{\partial^{2}R}{\partial q_{\textbf m}\partial {q}_{\textbf l}}\right\|^{*}_{\rho+\delta}
&\leq& \mathcal{C}\left\|R\right\|_{\rho},
\end{eqnarray}
where
\begin{equation*}
\mathcal{C}={\sum_{a,k,k'\in\mathbb{N}^{\mathbb{Z}^d}}}k_{\textbf m}k_{\textbf l}\exp\left\{-\delta\left(\sum_{i\geq1}
\ln^{\sigma}\lfloor \textbf n_{i}(a,k,k')\rfloor\right)+\delta\left(\ln^{\sigma}\lfloor \textbf m\rfloor+\ln^{\sigma}\lfloor \textbf l\rfloor\right)\right\}.
\end{equation*}
To prove the inequality (\ref{052501}) holds, it suffices to show that
\begin{equation}\label{032301}
\mathcal{C}\leq \left(\frac{12}{e\delta}\right)^2\cdot \exp\left\{\left(\frac{3600d}{\delta^2}\right)^d\cdot \exp\left\{d{\left(\frac{12d}\delta \right)}^{\frac{1}{\sigma-1}}\right\}\right\},
\end{equation}which will be discussed in the following three cases:

$\textbf{Case 1.}$ $$\max\{\lfloor \textbf m\rfloor,\lfloor \textbf l\rfloor\}\leq \lfloor\textbf n_{3}^*(a,k,k')\rfloor.$$

Then one has
\begin{equation}\label{052104}
-\delta\left(\sum_{i\geq1}
\ln^{\sigma}\lfloor \textbf n_{i}(a,k,k')\rfloor\right)+\delta\left(\ln^{\sigma}\lfloor \textbf m\rfloor+\ln^{\sigma}\lfloor \textbf l\rfloor\right)\leq -\frac{\delta}3\left(\sum_{i\geq1}
\ln^{\sigma}\lfloor \textbf n_{i}(a,k,k')\rfloor\right).
\end{equation}
Using (\ref{060320}) and (\ref{052104}), we have
\begin{eqnarray}\nonumber
\mathcal{C}
\nonumber &\leq&\underset{a,k,k'\in\mathbb{N}^{\mathbb{Z}^d}}{\sum}\left(\underset{i\geq 1}{\sum}\ln^{\sigma}\lfloor \textbf n_{i}^*(a,k,k')\rfloor\right)^{2}\exp\left\{-\frac{\delta}{3}\left(\underset{i\geq 1}{\sum}\ln^{\sigma}\lfloor \textbf n_{i}^*(a,k,k')\rfloor\right)\right\}\\
\nonumber &\leq&\sup\left\{\left(\underset{i\geq 1}{\sum}\ln^{\sigma}\lfloor \textbf n_{i}^*(a,k,k')\rfloor\right)^{2}\exp\left\{-\frac{\delta}{6}\left(\underset{i\geq 1}{\sum}\ln^{\sigma}\lfloor \textbf n_{i}^*(a,k,k')\rfloor\right)\right\}\right\}\\
&&\times\underset{a,k,k'\in\mathbb{N}^{\mathbb{Z}^d}}{\sum}\exp\left\{-\frac{\delta}{6}\left(\underset{i\geq 1}{\sum}\ln^{\sigma}\lfloor \textbf n_{i}^*(a,k,k')\rfloor\right)\right\}\nonumber\\
\label{032303} &\leq& \left(\frac{12}{e\delta}\right)^2\cdot \exp\left\{\left(\frac{3600d}{\delta^2}\right)^d\cdot \exp\left\{d{\left(\frac{12d}\delta \right)}^{\frac{1}{\sigma-1}}\right\}\right\},
\end{eqnarray}
where the last inequality follows from (\ref{042805*}) in Lemma \ref{8.6}, (\ref{041809}) in Lemma \ref{a3}
 and (\ref{122401}) in Lemma \ref{a5}.

$\textbf{Case 2.}$ $$\min\{\lfloor \textbf m\rfloor,\lfloor \textbf l\rfloor\}> \lfloor \textbf n_{3}^*(a,k,k')\rfloor.$$

In this case, one has
\begin{equation}\label{052310}
|k_{\textbf m}|+|k_{\textbf l}|\leq 2.
\end{equation}
Without loss of generality, we assume that
\begin{equation*}\label{033101}
\textbf n_1^*(a,k,k')=\textbf l,\qquad \textbf n_2^*(a,k,k')=\textbf m.
\end{equation*}
In view of (\ref{052310}), we thus obtain
\begin{eqnarray}
\nonumber \mathcal{C} &\leq& 5\sum_{a,k,k'\in\mathbb{N}^{\mathbb{Z}^d}\atop
\textbf n_1^*(a,k,k')=\textbf l,\textbf n_2^*(a,k,k')=\textbf m}\exp\left\{-
\delta\left(\underset{i\geq 3}{\sum}\ln^{\sigma}\lfloor \textbf n_{i}^*(a,k,k')\rfloor\right)\right\}\\
&\leq& 5 \exp\left\{\left(\frac{100d}{\delta^2}\right)^d\cdot \exp\left\{d{\left(\frac{2d}\delta \right)}^{\frac{1}{\sigma-1}}\right\}\right\}\label{052502},
\end{eqnarray}
where the last inequality follows from (\ref{041809}) in Lemma \ref{a3}
 and (\ref{122401}) in Lemma \ref{a5}.

$\textbf{Case 3.}$ $$\lfloor\textbf m\rfloor> \lfloor\textbf n_{3}^*(a,k,k')\rfloor,\quad \lfloor\textbf l\rfloor \leq \lfloor \textbf n_{3}^*(a,k,k')\rfloor$$ or $$\lfloor\textbf l\rfloor> \lfloor\textbf n_{3}^*(a,k,k')\rfloor,\quad \lfloor\textbf m\rfloor \leq \lfloor \textbf n_{3}^*(a,k,k')\rfloor.$$
Without loss of generality, we assume
$$\lfloor\textbf m\rfloor> \textbf n_{3}^*(a,k,k'),\quad \lfloor\textbf l\rfloor \leq \lfloor \textbf n_{3}^*(a,k,k')\rfloor$$
and $\textbf m=\textbf n_1^*(a,k,k').$
In this case one has
\begin{equation*}\label{052320}
|k_{\textbf m}|\leq 2,
\end{equation*}
and then
\begin{eqnarray}\nonumber
 \mathcal{C}&\leq& 2\sum_{a,k,k'\in\mathbb{N}^{\mathbb{Z}^d}\atop \textbf n_1^*(a,k,k')=\textbf m}k_{\textbf l}\exp\left\{-\frac
\delta2\left(\underset{i\geq 2}{\sum}\ln^{\sigma}\lfloor \textbf n_{i}^*(a,k,k')\rfloor\right)\right\}.
\end{eqnarray}
Following the proof of \textbf{Case 1.}, we have
\begin{equation}\label{052503}
\mathcal{C}\leq \left(\frac{8}{e\delta}\right)\cdot \exp\left\{\left(\frac{1600d}{\delta^2}\right)^d\cdot \exp\left\{d{\left(\frac{8d}\delta \right)}^{\frac{1}{\sigma-1}}\right\}\right\}.
\end{equation}

In view of (\ref{032303}), (\ref{052502}) and (\ref{052503}), we finish the proof of (\ref{032301}).

\end{proof}
\subsection{Proof of Lemma \ref{010}}
\begin{proof}
Let
\begin{equation*}
R_1(q,\bar q)=\sum_{a,k,k'\in\mathbb{N}^{\mathbb{Z}^d}} b_{akk'}\mathcal{M}_{akk'}
\end{equation*}
and
\begin{equation*}
R_2(q,\bar q)=\sum_{A,K,K'\in\mathbb{N}^{\mathbb{Z}^d}} B_{AKK'}\mathcal{M}_{AKK'}.
\end{equation*}
Then one has
\begin{equation*}
\{R_1,R_2\}=\sum_{a,k,k',A,K,K'\in\mathbb{N}^{\mathbb{Z}^d}}b_{akk'}B_{AKK'}\{\mathcal{M}_{akk'},\mathcal{M}_{AKK'}\},
\end{equation*}
where
\begin{eqnarray*}
\{\mathcal{M}_{akk'},\mathcal{M}_{AKK'}\}
&=&{\textbf{i}}\sum_{\textbf j\in\mathbb{Z}^d}\left(\prod_{\textbf n\neq \textbf j}I_{\textbf n}(0)^{a_{\textbf n}+A_{\textbf n}}q_{\textbf n}^{k_{\textbf n}+K_{\textbf n}}\bar{q}_{\textbf n}^{k_{\textbf n}'+K_{\textbf n}'}\right)\\
&&\times\left(\left(k_{\textbf j}K_{\textbf j}'-k_{\textbf j}'K_{\textbf j}\right)I_{\textbf j}(0)^{a_{\textbf j}+A_{\textbf j}}q_{\textbf j}^{k_{\textbf j}+K_{\textbf j}-1}\bar{q}_{\textbf j}^{k_{\textbf j}'+K_{\textbf j}'-1}\right).
\end{eqnarray*}
Given any $\alpha,\kappa,\kappa'\in\mathbb{N}^{\mathbb{Z}^d}$, the coefficient of the monomial $\mathcal{M}_{\alpha\kappa\kappa'}$ in $\left\{R_1,R_2\right\}$ is given by
\begin{equation}\label{006}
 B_{\alpha\kappa\kappa'}=\textbf{i}\sum_{{\textbf j}\in\mathbb{Z}^d}\sum_{*}\sum_{**}\left(k_{\textbf j}K_{\textbf j}'-k_{\textbf j}'K_{\textbf j}\right)b_{akk'}B_{AKK'},
\end{equation}
where
\begin{equation*}
\sum_{*}=\sum_{a,A\in\mathbb{N}^{\mathbb{Z}^d} \atop a+A=\alpha},
\end{equation*}
and
\begin{equation*}
\sum_{**}=\sum_{k,k',K,K'\in\mathbb{N}^{\mathbb{Z}^d}\atop \mbox{when}\ {\textbf n}\neq {\textbf j}, k_{\textbf n}+K_{\textbf n}=\kappa_{\textbf n},k_{\textbf n}'+K_{\textbf n}'=\kappa_{\textbf n}';\mbox{when}\ {\textbf n}={\textbf j}, k_{\textbf n}+K_{\textbf n}-1=\kappa_{\textbf n},k_{\textbf n}'+K_{\textbf n}'-1=\kappa_{\textbf n}'}.
\end{equation*}
To estimate (\ref{042704}), some simple facts are given firstly:

$\textbf{1}.$ If ${\textbf j}\notin\ \mbox{supp}\ (k+k') \bigcap\ \mbox{supp}\ (K+K')$, then
\begin{equation*}
k_{\textbf j}K_{\textbf j}'-k_{\textbf j}'K_{\textbf j}=0.
\end{equation*}
Hence we always assume ${\textbf j}\in\ \mbox{supp}\ (k+k') \bigcap\ \mbox{supp}\ (K+K')$, which implies
\begin{equation}\label{122801}
\lfloor{\textbf j}\rfloor\leq \min\left\{\lfloor{\textbf n}_1^{*}(a,k,k')\rfloor,\lfloor{\textbf n}_1^{*}(A,K,K')\rfloor\right\}.
\end{equation}
Since
\begin{equation*}
\mbox{supp}\ (\alpha,\kappa,\kappa')\subset \mbox{supp}\ (a,k,k')\bigcup \mbox{supp}\ (A,K,K'),
\end{equation*}
the following inequality always holds
\begin{equation}\label{041801}
\lfloor{\textbf n}_1^{*}(\alpha,\kappa,\kappa')\rfloor\leq \max\left\{\lfloor{\textbf n}_1^{*}(a,k,k')\rfloor,\lfloor{\textbf n}_1^{*}(A,K,K')\rfloor\right\}.
\end{equation}
Combining (\ref{122801}) and (\ref{041801}), one has
\begin{equation}\label{031901}
\ln^{\sigma}\lfloor{\textbf j}\rfloor+\ln^{\sigma}\lfloor{\textbf n}_1^{*}(\alpha,\kappa,\kappa')\rfloor-\ln^{\sigma}\lfloor{\textbf n}_1^{*}(a,k,k')\rfloor-\ln^{\sigma}\lfloor{\textbf n}_1^{*}(A,K,K')\rfloor\leq 0.
\end{equation}

$\textbf{2}.$ Note that
\begin{eqnarray}
\sum_{i\geq 1}\ln^{\sigma}\lfloor{\textbf n}_i^{*}(a,k,k')\rfloor
\geq\sum_{i\geq 3}\ln^{\sigma}\lfloor{\textbf n}_i^{*}(a,k,k')\rfloor
\geq \sum_{\textbf n\in\mathbb{Z}^d}\left(2a_{\textbf n}+k_{\textbf n}+k_{\textbf n}'\right)
\label{042604},
\end{eqnarray}
where using $\sum_{\textbf n\in\mathbb{Z}^d}\left(2a_{\textbf n}+k_{\textbf n}+k_{\textbf n}'\right)\geq 4$.

Based on (\ref{042604}), we obtain
\begin{equation}\label{042606}
\sum_{\textbf n\in\mathbb{Z}^d}(k_{\textbf n}+k_{\textbf n}')(K_{\textbf n}+K_{\textbf n}')
\leq \left(\sum_{i\geq 3}\ln^{\sigma}\lfloor{\textbf n}_i^{*}(a,k,k')\rfloor\right)\left(\sum_{i\geq 3}\ln^{\sigma}\lfloor{\textbf n}_i^{*}(A,K,K')\rfloor\right).
\end{equation}

$\textbf{3}.$ It is easy to see that
\begin{eqnarray}\label{052802}
\sum_{\textbf n\in\mathbb{Z}^d}\left(2\alpha_{\textbf n}+\kappa_{\textbf n}+\kappa'_{\textbf n}\right)=\sum_{\textbf n\in\mathbb{Z}^d}\left(2a_{\textbf n}+k_{\textbf n}+k'_{\textbf n}\right)+\sum_{\textbf n\in\mathbb{Z}^d}\left(2A_{\textbf n}+K_{\textbf n}+K'_{\textbf n}\right)-2
\end{eqnarray}
and
\begin{equation}
\sum_{i\geq 1}\ln^{\sigma}\lfloor{\textbf n}_i^{*}(\alpha,\kappa,\kappa')\rfloor=\left(\sum_{i\geq 1}\ln^{\sigma}\lfloor{\textbf n}_i^{*}(a,k,k')\rfloor\right)+\left(\sum_{i\geq 1}\ln^{\sigma}\lfloor{\textbf n}_i^{*}(A,K,K')\rfloor\right)-2\ln^{\sigma}\lfloor{\textbf j}\rfloor\label{052801}.
\end{equation}
In view of (\ref{042602}) and (\ref{001}) in Lemma \ref{005}, one has
\begin{equation}
\left|b_{akk'}\right|
\label{007}\leq\left\|R_1\right\|_{\rho-\delta_1}e^{\rho\left(\sum_{{\textbf n}\in\mathbb{Z}^d}\left(2a_{\textbf n}+k_{\textbf n}+k_{\textbf n}'\right)
\ln^{\sigma}\lfloor{\textbf n}\rfloor-2\ln^{\sigma}\lfloor {\textbf n}_1^{*}(a,k,k')\rfloor\right)}e^{-{\frac12}\delta_1
\left(\sum_{i\geq 3}\ln^{\sigma}\lfloor{\textbf n}_i^{*}(a,k,k')\rfloor\right)},
\end{equation}
and
\begin{equation}
\left|B_{AKK'}\right|\leq \label{008}\left\|R_2\right\|_{\rho-\delta_2}e^{\rho\left(\sum_{{\textbf n}\in\mathbb{Z}^d}\left(2A_{\textbf n}+K_{\textbf n}+K_{\textbf n}'\right)
\ln^{\sigma}\lfloor{\textbf n}\rfloor-2\ln^{\sigma} \lfloor{\textbf n}_1^{*}(A,K,K')\rfloor\right)}e^{-\frac12\delta_2
\left(\sum_{i\geq 3}\ln^{\sigma}\lfloor{\textbf n}_i^{*}(A,K,K\rfloor\right)}.
\end{equation}
 Using (\ref{052801}), substitution of (\ref{007}) and (\ref{008}) in (\ref{006}) gives
\begin{eqnarray*}
\label{009}\left|B_{\alpha\kappa\kappa'}\right|
\nonumber&\leq&||R_1||_{\rho-\delta_1}||R_2||_{\rho-\delta_2}\sum_{{\textbf j}\in\mathbb{Z}^d}\sum_{*}\sum_{**}\left|k_{\textbf j}K_{\textbf j}'-k_{\textbf j}'K_{\textbf j}\right|\\
&&\times e^{\rho\left(\sum_{{\textbf n}\in\mathbb{Z}^d}(2a_{\textbf n}+k_{\textbf n}+k_{\textbf n}')
\ln^{\sigma}\lfloor{\textbf n}\rfloor-2\ln^{\sigma}\lfloor {\textbf n}_1^{*}(a,k,k')\rfloor\right)}\\
&&\times e^{\rho\left(\sum_{{\textbf n}\in\mathbb{Z}^d}\left(2A_{\textbf n}+K_{\textbf n}+K_{\textbf n}'\right)
\ln^{\sigma}\lfloor{\textbf n}\rfloor-2\ln^{\sigma}\lfloor {\textbf n}_1^{*}(A,K,K')\rfloor\right)}\\
\nonumber&&\times e^{-{\frac12}\delta_1
\left(\sum_{i\geq 3}\ln^{\sigma}\lfloor{\textbf n}_i^{*}(a,k,k')\rfloor\right)}e^{-\frac12\delta_2
\left(\sum_{i\geq 3}\ln^{\sigma}\lfloor{\textbf n}_i^{*}(A,K,K)\rfloor\right)}\\
\nonumber&=&\mathcal{A}\cdot ||R_1||_{\rho-\delta_1}||R_2||_{\rho-\delta_2}e^{\rho\left(\sum_{{\textbf n}\in\mathbb{Z}^d}(2\alpha_{\textbf n}+\kappa_{\textbf n}+\kappa_{\textbf n}')
\ln^{\sigma}\lfloor{\textbf n}\rfloor-2\ln^{\sigma}\lfloor {\textbf n}_1^{*}(\alpha,\kappa,\kappa')\rfloor\right)},
\end{eqnarray*}
where
\begin{eqnarray*}
\mathcal{A}&=&\sum_{\textbf j\in\mathbb{Z}^d}\sum_{*}\sum_{**}\left|k_{\textbf j}K_{\textbf j}'-k_{\textbf j}'K_{\textbf j}\right|\\
&& \times e^{2\rho\left(\ln^{\sigma}\lfloor{\textbf j}\rfloor+\ln^{\sigma}\lfloor{\textbf n}_1^{*}(\alpha,\kappa,\kappa')\rfloor-\ln^{\sigma}\lfloor{\textbf n}_1^{*}(a,k,k')\rfloor-\ln^{\sigma}\lfloor{\textbf n}_1^{*}(A,K,K')\rfloor\right)}\\
&&\times e^{-{\frac12}\delta_1
\left(\sum_{i\geq 3}\ln^{\sigma}\lfloor{\textbf n}_i^{*}(a,k,k')\rfloor\right)}e^{-\frac12\delta_2
\left(\sum_{i\geq 3}\ln^{\sigma}\lfloor{\textbf n}_i^{*}(A,K,K)\rfloor\right)}.
\end{eqnarray*}

To show (\ref{042704}) holds, it suffices to prove
\begin{equation}\label{041802}
\mathcal{A}\leq \frac{1}{\delta_2}\cdot\exp\left\{3\left(\frac{14400d}{\delta_1^2}\right)^d\cdot \exp\left\{d{\left(\frac{24d}{\delta_1} \right)}^{\frac{1}{\sigma-1}}\right\}\right\}.
\end{equation}

Now we will prove the inequality (\ref{041802}) holds in the following two cases:

 $\textbf{Case. 1.} \ \lfloor\textbf{n}_1^{*}(\alpha,\kappa,\kappa')\rfloor\leq \lfloor\textbf n_1^{*}(A,K,K')\rfloor$.

\textbf{Case. 1.1.}\begin{equation}\label{051501} \lfloor\textbf{j}\rfloor\leq \lfloor\textbf{n}_3^{*}\left(a,k,k'\right)\rfloor.\end{equation}
 Using (\ref{051501}), one has
\begin{eqnarray}
&&\nonumber e^{2\rho\left(\ln^{\sigma}\lfloor\textbf{j}\rfloor-\ln^{\sigma}\lfloor\textbf{n}_1^{*}\left(a,k,k'\right)\rfloor\right)}
\\
&\leq&\nonumber e^{2\rho\left(\ln^{\sigma}\lfloor\textbf{n}_3^{*}\left(a,k,k'\right)\rfloor-\ln^{\sigma}\lfloor
\textbf{n}_1^{*}\left(a,k,k'\right)\rfloor\right)}
\\
&\leq& e^{\frac12\delta_1\left(\ln^{\sigma}\lfloor\textbf{n}_3^{*}\left(a,k,k'\right)\rfloor
-\ln^{\sigma}\lfloor\textbf{n}_1^{*}\left(a,k,k'\right)\rfloor\right)}\label{051001},
\end{eqnarray}
where the last inequality is based on $0<\delta_1\leq \frac14 \rho$,
and then
\begin{eqnarray}
&&\nonumber e^{2\rho\left(\ln^{\sigma}\lfloor{\textbf j}\rfloor+
\ln^{\sigma}\lfloor{\textbf n}_1^{*}(\alpha,\kappa,\kappa')\rfloor-\ln^{\sigma}\lfloor{\textbf n}_1^{*}(a,k,k')\rfloor-\ln^{\sigma}\lfloor{\textbf n}_1^{*}(A,K,K')\rfloor\right)}e^{-\frac12\delta_1\left(\sum_{i\geq 3}\ln^{\sigma}\lfloor \textbf n_i^*(a,k,k')\rfloor\right)}
\\&\leq& e^{-{\frac16\delta_1}\left(\sum_{i\geq 1}\ln^{\sigma}\lfloor\textbf{n}_i^{*}\left(a,k,k'\right)\rfloor\right)}\label{042603},
\end{eqnarray}
which follows from (\ref{041801}) and (\ref{051001}).

Note that if $\textbf j,a,k,k'$ are specified, and then $A,K,K'$ are uniquely determined.
In view of (\ref{042606}) and (\ref{042603}), we have
\begin{eqnarray*}
\mathcal{A}
&\leq&\sum_{a,k,k'\in\mathbb{N}^{\mathbb{Z}^d}}\left(\sum_{i\geq 1}\ln^{\sigma}\lfloor\textbf{n}_i^{*}\left(a,k,k'\right)\rfloor\right)\left(\sum_{i\geq 3}\ln^{\sigma}\lfloor\textbf{n}_i^{*}\left(A,K,K'\right)\rfloor\right)\\
&&\times e^{-\frac16\delta_1\left(\sum_{i\geq 1}\ln^{\sigma}\lfloor\textbf{n}_i^{*}\left(a,k,k'\right)\rfloor\right)}
e^{-\frac12\delta_2\left(\sum_{i\geq 3}\ln^{\sigma}
\lfloor\textbf{n}_i^{*}\left(A,K,K'\right)\rfloor\right)}\\
&\leq&\frac{48}{e^2\delta_1\delta_2}\sum_{a,k,k'\in\mathbb{N}^{\mathbb{Z}^d}}
e^{-\frac1{12}\delta_1\left(\sum_{i\geq 1}\ln^{\sigma}\lfloor\textbf{n}_i^{*}\left(a,k,k'\right)\rfloor\right)}\qquad \mbox{(in view of (\ref{042805*}) with $p=1$)}\\
&=&\frac{48}{e^2\delta_1\delta_2}\sum_{a,k,k'\in\mathbb{N}^{\mathbb{Z}^d}}
e^{-\frac1{12}\delta_1\left(\sum_{\textbf n\in\mathbb{Z}^d}\left(2a_{\textbf n}+k_{\textbf n}+k'_{\textbf n}\right)\ln^{\sigma}\lfloor\textbf n\rfloor\right)}\\
&\leq&\frac{48}{e^2\delta_1\delta_2}\left(\sum_{a\in\mathbb{N}^{\mathbb{Z}^d}}
e^{-\frac1{12}\delta_1\sum_{n\in\mathbb{Z}^d}2a_{\textbf n}\ln^{\sigma}\lfloor\textbf n\rfloor}\right)
\left(\sum_{k\in\mathbb{N}^{\mathbb{Z}^d}}e^{-\frac1{12}\delta_1\sum_{n\in\mathbb{Z}^d}k_{\textbf n}\ln^{\sigma}\lfloor\textbf n\rfloor}\right)^2
\\
&\leq&\frac{48}{e^2\delta_1\delta_2}\prod_{\textbf n\in\mathbb{Z}^d}\left({1-e^{-\frac16\delta_1\ln^{\sigma}\lfloor \textbf n\rfloor}}\right)^{-1}
\left({1-e^{-\frac1{12}\delta_1\ln^{\sigma}\lfloor\textbf n\rfloor}}\right)^{-2}      \\
&&\nonumber\mbox{(which is based on Lemma \ref{a3})}\\
&\leq&\frac{1}{\delta_2}\cdot\exp\left\{3\left(\frac{14400d}{\delta_1^2}\right)^d\cdot \exp\left\{d{\left(\frac{24d}{\delta_1} \right)}^{\frac{1}{\sigma-1}}\right\}\right\} ,
\end{eqnarray*}
where the last inequality uses (\ref{122401}) in Lemma \ref{a5}.

\textbf{Case. 1.2.}\begin{equation} \lfloor\textbf j\rfloor>\lfloor\textbf n_3^{*}(a,k,k')\rfloor.\end{equation}

In this case, we have
\begin{equation}\label{040801}
\textbf j\in\left\{\textbf n_1^{*}(a,k,k'),\textbf n_2^{*}(a,k,k')\right\}.
\end{equation}
Furthermore, if $2a_{\textbf j}+k_{\textbf j}+k'_{\textbf j}>2$, then $\lfloor\textbf j\rfloor\leq \lfloor\textbf n_3^{*}(a,k,k')\rfloor$, we are in $\textbf{Case. 1.1.}$. Hence in what follows, we always assume
\begin{equation*}
2a_{\textbf j}+k_{\textbf j}+k'_{\textbf j}\leq2,
\end{equation*}
which implies
\begin{equation}\label{2.24}
k_{\textbf j}+k_{\textbf j}'\leq 2.
\end{equation}
For simplicity, for $i\geq 1$ denote by $$\textbf n_i=\textbf n_i^{*}(a,k,k')$$
and
$$\textbf N_i=\textbf n_i^{*}(A,K,K').$$
From (\ref{031901}), (\ref{040801}) and (\ref{2.24}), it follows that
\begin{equation*}
\mathcal{A}\nonumber
\nonumber\leq2\sum_{a,k,k'\in\mathbb{N}^{\mathbb{Z}^d}}\left(K_{\textbf n_1}+K_{\textbf n_1}'+K_{\textbf n_2}+K_{\textbf n_2}'\right) \cdot e^{-\frac12\left(\delta_1\left(\sum_{i\geq 3}\ln^{\sigma}\lfloor\textbf  n_i\rfloor\right){+\delta_2\left(\sum_{i\geq 3}\ln^{\sigma}\lfloor\textbf N_i\rfloor\right)}\right)}.
\end{equation*}
In view of (\ref{042604}) and (\ref{052802}), we have
\begin{eqnarray}
\sum_{\textbf n\in\mathbb{Z}^d}\left(2\alpha_{\textbf n}+\kappa_{\textbf n}+\kappa_{\textbf n}'\right)
\leq\label{042701}\left(\sum_{i\geq3}\ln^{\sigma}\lfloor\textbf n_{i}\rfloor\right)+\left(\sum_{i\geq3}\ln^{\sigma}\lfloor\textbf N_{i}\rfloor\right).
\end{eqnarray}
Moreover, note that  for any $\textbf j\in\mathbb{Z}^d$,
\begin{equation}\label{030520}
 K_{\textbf j}+K'_{\textbf j}\leq \kappa_{\textbf j}+\kappa'_{\textbf j}-k_{\textbf j}-k'_{\textbf j}+2\leq\kappa_{\textbf j}+\kappa'_{\textbf j}+2.
\end{equation}
Using (\ref{042701}) and (\ref{030520}), one has
\begin{eqnarray}
\nonumber \mathcal{A}&\leq& 2\sum_{a,k,k'\in\mathbb{N}^{\mathbb{Z}^d}}\left(\kappa_{\textbf n_1}+\kappa'_{\textbf n_1}+\kappa_{\textbf n_2}+\kappa'_{\textbf n_2}+4\right)  \\
&&\label{2.26}\times e^{-\frac14\delta_1\left(\sum_{i\geq 3}\ln^{\sigma}\lfloor\textbf n_i\rfloor\right)}e^{-\frac18\delta\left(\sum_{\textbf n\in\mathbb{Z}^d}\left(2\alpha_{\textbf n}+\kappa_{\textbf n}+\kappa_{\textbf n}'\right)\right)},
\end{eqnarray}
where $\delta=\min\{\delta_1,\delta_2\}$.

\begin{rem}\label{042703} Firstly, note that $\{\textbf n_1,\textbf n_2\}\cap \mathrm{supp}\ \mathcal{M}_{\alpha\kappa\kappa'}\neq \emptyset$. Thus $\textbf n_1$  (or $\textbf n_2$) ranges in a set of cardinality no more than \begin{equation}\label{042702}\#\mathrm{supp} \ \mathcal{M}_{\alpha\kappa\kappa'}\leq \sum_{\textbf n\in\mathbb{Z}^d}\left(2\alpha_{\textbf n}+\kappa_{\textbf n}+\kappa_{\textbf n}'\right).
\end{equation}Secondly, if $(\textbf n_i)_{i\geq 3}$ and $\textbf n_1$ (resp. $\textbf n_2$) is specified, then $\textbf n_2$ (resp. $\textbf n_1$) is determined uniquely. Thirdly, if $(\textbf n_i)_{i\geq 1}$ is given, then $\textbf n(a,k,k')$ is specified, and hence $\textbf n(a,k,k')$ is specified up to a factor of
\begin{equation}\label{030203}
\prod_{\textbf n\in\mathbb{Z}^d}\left(1+l_{\textbf n}^2\right),
\end{equation}
where
$$l_{\textbf n}=\#\{j:\textbf n_{j}=\textbf n\}.$$
\end{rem}
Since $\lfloor\textbf n_1\rfloor\geq \lfloor\textbf n_2\rfloor>\lfloor\textbf n_3\rfloor$, one has
\begin{equation}\label{030202}
\prod_{\textbf n\in\mathbb{Z}^d\atop \textbf n=\textbf n_1,\textbf n_2}\left(1+l_{\textbf n}^2\right)\leq 5.
\end{equation}
Following (\ref{2.26})-(\ref{030202}), we thus obtain
\begin{eqnarray}
\nonumber \mathcal{A}
\nonumber&\leq& 60\sum_{(\textbf n_i)_{i\geq3}}\prod_{\textbf m\in\mathbb{Z}^d\atop \lfloor\textbf m\rfloor\leq \lfloor\textbf n_3\rfloor}\left(1+l_{\textbf m}^2\right)e^{-\frac14\delta_1\left(\sum_{i\geq 3}\ln^{\sigma}\lfloor\textbf n_i\rfloor\right)}\\
\nonumber&&\times\left(\sum_{\textbf n\in\mathbb{Z}^d}\left(2\alpha_{\textbf n}+\kappa_{\textbf n}+\kappa_{\textbf n}'\right)\right)e^{-\frac18\delta\left(\sum_{\textbf n\in\mathbb{Z}^d}\left(2\alpha_{\textbf n}+\kappa_{\textbf n}+\kappa_{\textbf n}'\right)\right)}\\
&\leq & \nonumber\frac{480}{e\delta}\sum_{(\textbf n_i)_{i\geq3}}
\prod_{\textbf m\in\mathbb{Z}^d\atop \lfloor\textbf m\rfloor\leq \lfloor\textbf n_3\rfloor}\left(1+l_{\textbf m}^2\right)e^{-\frac14\delta_1\left(\sum_{i\geq 3}\ln^{\sigma}\lfloor\textbf n_i\rfloor\right)}\quad \mbox{(by (\ref{042805*}))}\\
\nonumber&\leq&\frac{480}{e\delta}\sup_{(\textbf n_i)_{i\geq3}}\left(\prod_{\textbf m\in\mathbb{Z}^d\atop\left\|\textbf m\right\|\leq \left\|\textbf n_3\right\|}\left(1+l_{\textbf m}^2\right)
e^{-\frac18\delta_1\sum_{i\geq3}\ln^{\sigma}\lfloor\textbf n_i\rfloor}\right)
\\&&\times\sum_{(\textbf n_i)_{i\geq3}}
e^{-\frac18\delta_1\sum_{i\geq3}\ln^{\sigma}\lfloor\textbf n_i\rfloor}\label{031910}.
\end{eqnarray}
By (\ref{042807}), one has
\begin{equation}\label{031911}
\sup_{(\textbf n_i)_{i\geq3}}\left(\prod_{\textbf m\in\mathbb{Z}^d\atop\lfloor\textbf m\rfloor\leq \lfloor\textbf n_3\rfloor}\left(1+l_{\textbf m}^2\right)
e^{-\frac18\delta_1\sum_{i\geq3}\lfloor\textbf n_i\rfloor}\right)\leq\exp\left\{6d\left(\frac{32}{\delta_1}\right)^{\frac{1}{\sigma-1}}\cdot\exp\left\{\left(\frac1{\delta_1}\right)^{\frac1{\sigma}}\right\}\right\}
.
\end{equation}
In view of  (\ref{041809}) and (\ref{122401}), we have
\begin{equation}\label{031912}
\sum_{(\textbf n_i)_{i\geq3}}
e^{-\frac18\delta_1\sum_{i\geq3}\ln^{\sigma}\lfloor\textbf n_i\rfloor}\leq \exp\left\{\left(\frac{6400d}{\delta_1^2}\right)^d\cdot \exp\left\{d{\left(\frac{16d}{\delta_1} \right)}^{\frac{1}{\sigma-1}}\right\}\right\}.
\end{equation}
By (\ref{031910}), (\ref{031911}) and (\ref{031912}), we finish the proof of (\ref{041802}).

$\textbf{Case. 2.}\ \lfloor\textbf{n}_1^{*}(\alpha,\kappa,\kappa')\rfloor>\lfloor\textbf{n}_1^{*}(A,K,K')\rfloor.$

In view of (\ref{041801}), one has $\textbf n_1^{*}(a,k,k')=\textbf n_1^{*}(\alpha,\kappa.\kappa')$. Hence,  $\textbf n_2^{*}(a,k,k')$ is determined by $\textbf n_1^{*}(a,k,k')$ and $(\textbf n_i^{*}(a,k,k'))_{i\geq 3}$. Similar as $\textbf{Case 1.2}$, we have
\begin{eqnarray*}
\mathcal{A}
&\leq&   \frac{1}{\delta_2}\cdot\exp\left\{3\left(\frac{14400d}{\delta_1^2}\right)^d\cdot \exp\left\{d{\left(\frac{24d}{\delta_1} \right)}^{\frac{1}{\sigma-1}}\right\}\right\}.
\end{eqnarray*}
\end{proof}
\subsection{Proof of Lemma \ref{051301}}
\begin{proof}
Firstly, we will prove the inequality (\ref{N6}).
 Fixed $a,k,k'\in\mathbb{N}^{\mathbb{Z}^d}$, consider the monomial
$$\mathcal{M}_{akk'}=\prod_{\textbf n\in\mathbb{Z}^d}I_{\textbf n}(0)^{a_{\textbf n}}q_{\textbf n}^{k_{\textbf n}}\bar q_{\textbf n}^{k_{\textbf n}'}$$ satisfying $k_{\textbf n}k_{\textbf n}'=0$ for all ${\textbf n}\in\mathbb{Z}^d$. It is easy to see that $\mathcal{M}_{akk'}$ comes from some parts of the terms $\mathcal{M}_{\alpha\kappa\kappa'}$ with no assumption for $\kappa$ and $\kappa'$.
Write $\mathcal{M}_{\alpha\kappa\kappa'}$ in the form of
\begin{equation*}
\mathcal{M}_{\alpha\kappa\kappa'}=\mathcal{M}_{\alpha bll'}=\prod_{{\textbf n}\in\mathbb{Z}}I_{\textbf n}(0)^{\alpha_{\textbf n}}I_{\textbf n}^{b_{\textbf n}}q_{\textbf n}^{l_{\textbf n}}{\bar q_{\textbf n}}^{l_{\textbf n}'}
\end{equation*}
where
\begin{equation*}
b_{\textbf n}=\min\left\{\kappa_{\textbf n},\kappa_{\textbf n}'\right\},\quad l_{\textbf n}=\kappa_{\textbf n}-b_{\textbf n},\quad l_{\textbf n}'=\kappa_{\textbf n}'-b_{\textbf n}
\end{equation*}
and then
$l_{\textbf n}l_{\textbf n}'=0$ for all ${\textbf n}\in\mathbb{Z}^d$.

Express the term
\begin{equation*}
\prod_{{\textbf n}\in\mathbb{Z}^d}I_{\textbf n}^{b_{\textbf n}}=\prod_{{\textbf n}\in\mathbb{Z}^d}\left(I_{\textbf n}(0)+J_{\textbf n}\right)^{b_{\textbf n}}
\end{equation*}by the monomials of the following form
\begin{equation*}
\prod_{{\textbf n}\in\mathbb{Z}^d}I_{\textbf n}(0)^{b_{\textbf n}},
\end{equation*}
\begin{equation*}
b_{\textbf m}I_{\textbf m}(0)^{b_{\textbf m}-1}J_{\textbf m}\left(\prod_{{\textbf n}\in\mathbb{Z}^d\atop {\textbf n}\neq {\textbf m}}I_{\textbf n}(0)^{b_{\textbf n}}\right),\qquad {\textbf m}\in\mathbb{Z}^d,
\end{equation*}
\begin{equation*}
\left(\prod_{{\textbf n}\in\mathbb{Z}^d\atop \left\|{\textbf n}\right\|<\left\|{\textbf m}\right\|}I_{\textbf m}(0)^{b_{\textbf m}}\right)\left(b_{\textbf m}(b_{\textbf m}-1)I_{\textbf m}(0)^{r}J_{\textbf m}^2I_{\textbf m}^{b_{\textbf m}-r-2}\right)\left(\prod_{{\textbf n}\in\mathbb{Z}^d\atop \left\|{\textbf n}\right\|\geq {\textbf m},{\textbf n}\neq {\textbf m}}I_{\textbf n}^{b_{\textbf n}}\right),\qquad {\textbf m}\in\mathbb{Z},\ 0\leq r\leq b_{\textbf m}-2.
\end{equation*}
and
\begin{eqnarray*}
&&\left(\prod_{\left\|{\textbf n}\right\|< \left\|{\textbf m}_1\right\|}I_{\textbf n}(0)^{b_{\textbf n}}\right)\left(b_{{\textbf m}_1}I_{{\textbf m}_1}(0)^{b_{{\textbf m}_1}-1}J_{{\textbf m}_1}\right)\left(\prod_{\left\|{\textbf m}_1\right\|\leq\left\|{\textbf n}\right\|\leq \left\|{\textbf m}_2\right\|\atop {\textbf n}\neq {\textbf m}_1,{\textbf m}_2}I_{\textbf n}(0)^{b_{\textbf n}}\right)
\\
&&\nonumber\times\left(b_{{\textbf m}_2}I_{{\textbf m}_2}(0)^{r}J_{{\textbf m}_2}I_{{\textbf m}_2}^{b_{{\textbf m}_2}-r-1}\right)\left(\prod_{\left\|{\textbf n}\right\|>\left\|{\textbf m}_2\right\|}I_{\textbf n}^{b_{\textbf n}}\right)\qquad \left\|{\textbf m}_1\right\|\leq \left\|{\textbf m}_2\right\|,\ 0\leq r\leq b_{{\textbf m}_2}-1.
\end{eqnarray*}
Now we will estimate the bounds for the coefficients respectively. For any given $\textbf n\in\mathbb{Z}^d$,
\begin{equation}\label{030510}
I_{\textbf n}(0)^{a_{\textbf n}}q_{\textbf n}^{k_{\textbf n}}\bar q_{\textbf n}^{k_{\textbf n}'}=\sum_{b_{\textbf n}=\min\left\{ \kappa_{\textbf n}, \kappa_{\textbf n}'\right\}}I_{\textbf n}(0)^{\alpha_{\textbf n}+b_{\textbf n}}q_{\textbf n}^{\kappa_{\textbf n}-b_{\textbf n}}\bar q_{\textbf n}^{\kappa_{\textbf n}'-b_{\textbf n}}.
\end{equation}
Hence, one has
\begin{equation}\label{N2}
a_{\textbf n}=\alpha_{\textbf n}+b_{\textbf n},
\end{equation}
and
\begin{equation}\label{N3}
k_{\textbf n}=\kappa_{\textbf n}-b_{\textbf n},\qquad k_{\textbf n}'=\kappa_{\textbf n}'-b_{\textbf n}.
\end{equation}
Therefore, if $0\leq\alpha_{\textbf n}\leq a_{\textbf n}$ is chosen, so $b_{\textbf n},\kappa_{\textbf n},\kappa_{\textbf n}'$ are determined.

On the other hand, by (\ref{042602}) we have
\begin{eqnarray}
\nonumber\left|B_{\alpha\kappa\kappa'}\right|
&\leq&\nonumber \left\|R\right\|_{\rho}e^{\rho\left(\sum_{{\textbf n}\in\mathbb{Z}^d}\left(2\alpha_{\textbf n}+\kappa_{\textbf n}+\kappa_{\textbf n}'\right)\ln^{\sigma}\lfloor{\textbf n}\rfloor-2\ln^{\sigma}\lfloor{\textbf n}_1^{*}(\alpha,\kappa,\kappa')\rfloor\right)}\qquad\qquad\qquad\qquad\ \\
&=&\nonumber\left\|R\right\|_{\rho}e^{\rho\left(\sum_{{\textbf n}\in\mathbb{Z}^d}\left(2\alpha_{\textbf n}+\left(k_{\textbf n}+a_{\textbf n}-\alpha_{\textbf n}\right)+\left(k_{\textbf n}'+a_{\textbf n}-\alpha_{\textbf n}\right)\right)\ln^{\sigma}\lfloor{\textbf n}\rfloor-2\ln^{\sigma}\lfloor{\textbf n}_1^{*}(\alpha,\kappa,\kappa')\rfloor\right)}\quad\\
&&\nonumber{(\mbox{in view of (\ref{N2}) and (\ref{N3})})}\\
&=&\nonumber\left\|R\right\|_{\rho}e^{\rho\left(\sum_{{\textbf n}\in\mathbb{Z}^d}\left(2a_{\textbf n}+k_{\textbf n}+k_{\textbf n}'\right)\ln^{\sigma}\lfloor{\textbf n} \rfloor-2\ln^{\sigma}\lfloor{\textbf n}_1^{*}(a,k,k')\rfloor\right)}.
\end{eqnarray}
Using (\ref{030510}), one has
\begin{equation}\label{N4}
\left|B_{akk'}\right|\leq\left\|R\right\|_{\rho}\prod_{{\textbf n}\in\mathbb{Z}^d}\left(1+a_{\textbf n}\right)
e^{\rho\left(\sum_{{\textbf n}\in\mathbb{Z}^d}\left(2a_{\textbf n}+k_{\textbf n}+k_{\textbf n}'\right)\ln^{\sigma}\lfloor{\textbf n}\rfloor-2\ln^{\sigma}\lfloor {\textbf n}_1^{*}(a,k,k')\rfloor\right)}.
\end{equation}
In view of (\ref{051302}) and (\ref{N4}), we have
\begin{equation}\label{N5}
\left\|R_0\right\|_{\rho+\delta}^{+}
\leq\mathcal{C}\left\|R\right\|_{\rho},
\end{equation}
where
\begin{equation}
\mathcal{C}=\prod_{{\textbf n}\in\mathbb{Z}^d}\left(1+a_{\textbf n}\right)e^{-\delta\left(\sum_{{\textbf n}\in\mathbb{Z}^d}\left(
2a_{\textbf n}+k_{\textbf n}+k_{\textbf n}'\right)\lfloor{\textbf n}\rfloor-2\lfloor{\textbf n}_1^{*}(a,k,k')\rfloor\right)}.
\end{equation}
Now it suffices to prove that
\begin{equation}\label{052004}
\mathcal{C}\leq \exp\left\{10d\left(\frac{10}{\delta}\right)^{\frac 1{\sigma-1}}\cdot\exp\left\{\left(\frac{10}\delta\right)^{\frac1\sigma}\right\}
\right\}
\end{equation}in the following three cases.

\textbf{Case 1.} $\lfloor{\textbf n}_1^{*}(a,k,k')\rfloor=\lfloor{\textbf n}_3^{*}(a,k,k')\rfloor.$

Using (\ref{001}), one has
\begin{eqnarray}
\mathcal{C}
&\leq&\nonumber \prod_{\textbf n\in\mathbb{Z}^d}\left(1+a_{\textbf n}\right)e^{-\frac12{\delta}\sum_{i\geq3}\ln^{\sigma}\lfloor{\textbf n}_i^{*}(a,k,k')\rfloor}\\
&\leq&\nonumber\prod_{\textbf n\in\mathbb{Z}^d}\left(1+a_{\textbf n}\right)^{-\frac16{\delta}\sum_{i\geq1}\ln^{\sigma}\lfloor{\textbf n}_i^{*}(a,k,k')\rfloor}\\
&=&\nonumber\prod_{{\textbf n}\in\mathbb{Z}^d}\left(1+a_{\textbf n}\right)e^{-\frac16\delta\sum_{{\textbf n}\in\mathbb{Z}^d}
\left(2a_{\textbf n}+k_{\textbf n}+k_{\textbf n}'\right)\ln^{\sigma}\lfloor{\textbf n}\rfloor}\\
&\leq&\nonumber\prod_{{\textbf n}\in\mathbb{Z}^d}\left((1+a_{\textbf n})e^{-\frac13{\delta}a_{\textbf n}\ln^{\sigma}\lfloor\textbf n\rfloor}\right)\\
&\leq&
\exp\left\{3d\left(\frac{6}{\delta}\right)^{\frac 1{\sigma-1}}\cdot\exp\left\{\left(\frac6\delta\right)^{\frac1\sigma}\right\}
\right\}\label{030205},
\end{eqnarray}
where the last inequality is based on (\ref{042807}).

\textbf{Case 2.} $\lfloor{\textbf n}_1^{*}(a,k,k')\rfloor>\lfloor{\textbf n}_2^{*}(a,k,k')\rfloor=\lfloor{\textbf n}_3^{*}(a,k,k')\rfloor.$

In this case, we have
\begin{eqnarray}
\nonumber \mathcal{C}&=&\prod_{\lfloor\textbf n\rfloor\leq \lfloor{\textbf n}_2^{*}(a,k,k')\rfloor}\left(1+a_{\textbf n}\right)e^{-\delta\left(\sum_{{\textbf n}\in\mathbb{Z}^d}\left(
2a_{\textbf n}+k_{\textbf n}+k_{\textbf n}'\right)\ln^{\sigma}\lfloor{\textbf n}\rfloor-2\ln^{\sigma}\lfloor{\textbf n}_1^{*}(a,k,k')\rfloor\right)}\\
&\leq&\prod_{\lfloor\textbf n\rfloor\leq \lfloor{\textbf n}_2^{*}(a,k,k')\rfloor}\left(1+a_{\textbf n}\right)e^{-\frac12\delta\sum_{i\geq3}\ln^{\sigma}\lfloor\textbf n_i^{*}(a,k,k')\rfloor}\nonumber\quad \mbox{(by (\ref{001}))}
\\
&\leq&\nonumber\prod_{\lfloor\textbf n\rfloor\leq \lfloor{\textbf n}_2^{*}(a,k,k')\rfloor}\left(1+a_{\textbf n}\right)e^{-\frac14\delta\sum_{i\geq2}\ln^{\sigma}\lfloor\textbf n_i^{*}(a,k,k')\rfloor}\\
&\leq&\label{030206} \exp\left\{3d\left(\frac{8}{\delta}\right)^{\frac 1{\sigma-1}}\cdot\exp\left\{\left(\frac8\delta\right)^{\frac1\sigma}\right\}
\right\},
\end{eqnarray}
where the last inequality follows from the proof of (\ref{030205}).

\textbf{Case 3.} $\lfloor{\textbf n}_2^{*}(a,k,k')\rfloor>\lfloor{\textbf n}_3^{*}(a,k,k')\rfloor.$

In this case, we have
\begin{eqnarray}
\mathcal{C}&\leq&2\prod_{\lfloor\textbf n\rfloor\leq \lfloor{\textbf n}_3^{*}(a,k,k')\rfloor}\left(1+a_{\textbf n}\right)e^{-\delta\left(\sum_{{\textbf n}\in\mathbb{Z}^d}\left(
2a_{\textbf n}+k_{\textbf n}+k_{\textbf n}'\right)\ln^{\sigma}\lfloor{\textbf n}\rfloor-2\ln^{\sigma}\lfloor{\textbf n}_1^{*}(a,k,k')\rfloor\right)}\nonumber\\
&\leq&2\prod_{\lfloor\textbf n\rfloor\leq \lfloor{\textbf n}_3^{*}(a,k,k')\rfloor}\left(1+a_{\textbf n}\right)e^{-\frac12\delta\sum_{i\geq3}\ln^{\sigma}\lfloor\textbf n_i^{*}(a,k,k')\rfloor}\nonumber
\\
&\leq&2\exp\left\{3d\left(\frac{4}{\delta}\right)^{\frac 1{\sigma-1}}\cdot\exp\left\{\left(\frac4\delta\right)^{\frac1\sigma}\right\}
\right\},\label{041301}
\end{eqnarray}
where the last inequality follows from the proof of (\ref{030205}).

In view of (\ref{030205}), (\ref{030206}) and (\ref{041301}), we finish the proof of (\ref{052004}).

Similarly, we get
\begin{eqnarray*}
&&\left\|R_1\right\|_{\rho+\delta}^{+},\left\|R_2\right\|_{\rho+\delta}^{+}\leq\exp\left\{10d\left(\frac{10}{\delta}\right)^{\frac 1{\sigma-1}}\cdot\exp\left\{\left(\frac{10}\delta\right)^{\frac1\sigma}\right\}
\right\}\left\|R\right\|_{\rho},
\end{eqnarray*}
Then we have
\begin{equation*}
\left\|R\right\|_{\rho+\delta}^{+}\leq \exp\left\{10d\left(\frac{10}{\delta}\right)^{\frac 1{\sigma-1}}\cdot\exp\left\{\left(\frac{10}\delta\right)^{\frac1\sigma}\right\}
\right\}\left\|R\right\|_{\rho}.
\end{equation*}

On the other hand, the coefficient of $\mathcal{M}_{\alpha bll'}$ increases by at most a factor $$\left(\sum_{\textbf n\in\mathbb{Z}^d}(\alpha_{\textbf n}+b_{\textbf n})\right)^2.$$ Then one has
\begin{eqnarray}
\nonumber\left\|R\right\|_{\rho+\delta}
&\leq&\nonumber\left\|R\right\|_{\rho}^{+}\left(\sum_{\textbf n\in\mathbb{Z}^d}(\alpha_{\textbf n}+b_{\textbf n})\right)^2e^{-\delta\left(\sum_{\textbf n\in\mathbb{Z}^d}\left(2a_{\textbf n}+k_{\textbf n}+k_{\textbf n}'\right)\ln^{\sigma}\lfloor\textbf n\rfloor-2\ln^{\sigma}\lfloor\textbf n_1^{*}(a,k,k')\rfloor\right)}\\
&\leq&\nonumber\left\|R\right\|_{\rho}^{+}\left(2\sum_{i\geq 3}\ln^{\sigma}\lfloor\textbf n_i^{*}(a,k,k')\rfloor\right)^2 e^{-\frac12\delta\sum_{i\geq3}\ln^{\sigma}\lfloor\textbf n_1^{*}(a,k,k')\rfloor}\quad (\mbox{in view of (\ref{042604})})\\
&\leq&\nonumber\frac{64}{e^2\delta^2}||R||_{\rho}^{+},
\end{eqnarray}
where the last inequality is based on Lemma \ref{8.6} with $p=2$.

\end{proof}

\section*{Acknowledgments}
   The author is supported by NNSFC No. 12071053.

\end{document}